\theoremstyle{plain}
\newtheorem{lemma}{Lemma}[section]
\newtheorem{proposition}[lemma]{Proposition}
\newtheorem{corollary}[lemma]{Corollary}
\newtheorem{theorem}[lemma]{Theorem}
\newtheorem{remark}[lemma]{Remark}
\newcommand{\Gtwo}{\ifmmode{{\rm G}_2}\else{${\rm G}_2$}\fi}
 \newcommand{\cyclic}{\mathop{\kern0.9ex{{+}\kern-2.2ex\raise-.28ex\hbox{\Large\hbox
 {$\circlearrowright$}}}}}
\def\sideremark#1{\ifvmode\leavevmode\fi\vadjust{\vbox to0pt{\vss
 \hbox to 0pt{\hskip\hsize\hskip1em
 \vbox{\hsize2.5cm\tiny\raggedright\pretolerance10000
 \noindent #1\hfill}\hss}\vbox to8pt{\vfil}\vss}}}%
\newfont{\eusm}{eusm10 scaled \magstep1}
\newfont{\eusmiii}{eusm10 scaled \magstep3}
\title[Invariant contact metric structures]{Invariant contact metric structures on tangent sphere bundles of compact symmetric spaces}
\author[J.~C.~Gonz{\'a}lez-D{\'a}vila]{J.~C.~Gonz{\'a}lez-D{\'a}vila}
\address{Departamento de Matem\'aticas, Estad\'istica e Investigaci\'on
Ope\-ra\-tiva, University of La Laguna, 38200 La Laguna, Tenerife, Spain.}
\email{jcgonza@ull.es}
\thanks{Partially supported by Grant PID2019-105019GB-C21 funded by M-CIN/AEI/10.13039/501100011033 and by ERDF ‘A way of making Europe’.}
\keywords{Contact metric structures, tangent sphere bundles, symmetric spaces of compact type, restricted roots}
\subjclass{53C30, 
               53C35,  
               53D10.  
}
\begin{document}

\maketitle

\begin{abstract} A new characterization is provided for the class of compact rank-one symmetric spaces. Such spaces are the only symmetric spaces of compact type for which the standard vector field $\xi^{S}$ on their sphere bundles is Killing with respect to some invariant Riemannian metric. The set of all these metrics is determined, as well as the set of all those invariant contact metric structures with characteristic vector field $\xi^{S}.$ Moreover, on tangent sphere bundles of compact symmetric spaces with rank greater than or equal to two, a family of invariant contact metric structures, which contains the standard structure, is obtained.
\end{abstract}

\section{Introduction}

There is an extensive bibliography on the geometry of the tangent bundle $TM$ over a Riemannian manifold $(M,g)$ and of the tangent sphere bundle $T_{r}M := \{u\in TM\colon g(u,u)= r^{2}\}$ of radius $r>0,$ specially for $r =1,$ equipped with their corresponding standard structures. See D. Blair \cite[Ch. 9]{Bl} and E. Boeckx and L. Vanhecke \cite{BV, BV1}, among others. On the other hand, the induced metric on the unit tangent sphere bundle $T_{1}M$ from a $g$-natural metric, which generalizes the Sasaki and the Cheeger-Gromoll metrics, has been treated by M.T.K. Abbassi, G. Calvaruso and M. Sarih (see, for example, \cite{AC, AS}).

The {\em standard almost Hermitian structure} $(J^{S},g^{S}),$ where $g^{S}$ is the Sasaki metric on $TM,$ is almost K\"ahler \cite{TO}. Then $T_{r}M,$ as the hypersurface $\iota_{r}\colon T_{r}M\to (TM,J^{S},g^{S}),$ inherits the induced contact metric structure $(\frac{1}{2r}\eta^{S},\frac{1}{4r^{2}}\tilde{g}^{S}),$ where $\eta^{S}$ is the one-form metrically equivalent to $\xi^{S} := -J^{S}N,$ $N$ being the outward normal unit vector field to the singular foliation $\{T_{r}M\}_{r\geq 0}.$ The vector field $\xi^{S},$ defined on the punctured bundle $TM\setminus \{\mbox{\rm zero section}\},$ is called the {\em standard vector field} of $TM$ and it is well known that, on the unit tangent bundle $(T_{1}M, \tilde{g}^{S}),$ $\xi^{S}$ is Killing if and only if $(M,g)$ has constant sectional curvature \cite{Tash}.

In the recent paper \cite{JC}, the author has proved the existence of invariant Riemannian metrics for which $\xi^{S}$ is Killing on the tangent sphere bundle, of any radius, of each compact rank-one symmetric space. A first objective that we address in this paper is to prove that there are no metrics satisfying this condition for the case of compact symmetric spaces of rank greater than or equal to two, which leads us to establish in Theorem \ref{tmain} a new characterization for compact rank-one symmetric spaces: 

{\em A symmetric space $G/K$ of compact type has rank one if and only if there exists a $G$-invariant Riemannian metric on the tangent sphere bundle $T_{r}(G/K),$ for some $r>0,$ for which the standard vector field $\xi^{S}$ is Killing.}

Moreover, the $G$-homogeneity of $T_{r}(G/K),$ of any radius $r>0,$ of a compact rank-one symmetric space $G/K,$ allows us to determine in Theorem \ref{mainrank1} {\em all} the $G$-invariant Riemannian metrics $\tilde{\bf g}$ on $T_{r}(G/K)$ for which the pair $(\xi =\frac{1}{\kappa} \xi^{S},\tilde{\bf g}),$ for each constant $\kappa >0,$ is a contact metric structure and hence, according to \cite{JC}, those that are $K$-contact and, consequently, Sasakian.

Our second objective is to give new examples of invariant contact metric structures on the tangent sphere bundle of a compact symmetric space of rank grater than or equal to two. For this we establish some concepts and results about restricted roots and their applications to tangent bundles. Such a study is included in Section $4,$  while in Section $3$ some basic concepts about homogeneous manifolds and their tangent bundles are exposed.

Given a Riemannian symmetric space $G/K$ of compact type, we set a Cartan decomposition ${\mathfrak g} = {\mathfrak k}\oplus {\mathfrak m}$ and a Cartan subspace ${\mathfrak a}$ of ${\mathfrak m}.$ Then, on the open dense subset $D(G/K):= G/H\times W$ of $T(G/K),$ where $H$ is the subgroup $H = \{k\in K\colon {\rm Ad}_{k}u = u\;\mbox{\rm for all}\;u\in {\mathfrak a}\}$ of $K$ and $W$ is a Weyl chamber, we construct a family of $G$-invariant almost Hermitian structures $(J^{q},{\bf g}^{q,a_{0},a_{\lambda}}),$ depending of smooth functions $q\colon {\mathbb R}^{+}\to {\mathbb R}^{+}$ and $a_{0},a_{\lambda}\colon W\to {\mathbb R}^{+},$ for each $\lambda\in \Sigma^{+},$ where $\Sigma^{+}$ is the set of positive restricted roots with respect to ${\mathfrak a}.$ For $q = {\rm Id}_{\mathbb R}$ and $a_{0}=a_{\lambda} = 1,$ for all $\lambda\in \Sigma^{+},$ $(J^{q},{\bf g}^{q, a_{0},a_{\lambda}})$ is the standard almost K\"ahler structure on $G/H\times W$ and, for $q(t) = \tanh t,$ $J^{q}$ coincides with the {\em canonical complex structure} \cite{Szoke1}.

We establish  in Theorem \ref{TJc} necessary and sufficient conditions on the functions $q,$ $a_{0}$ and $a_{\lambda},$ $\lambda\in \Sigma^{+},$ so that $(J^{q},{\bf g}^{q,a_{0},a_{\lambda}})$ can be extended to an almost Hermitian structure on the entire tangent bundle $T(G/K)$ and, in Theorem \ref{tKahler}, we determine when it is moreover almost K\"ahler. The obtained structures depend on smooth functions $q\colon {\mathbb R}^{+}\to {\mathbb R}^{+}$ verifying that $\lim_{t\to 0^{+}}q(t)/t\in {\mathbb R}^{+}$ and there is a unique one that is K\"ahler, precisely when $J^{q}$ is the canonical complex structure, that is, $q(t) = \tanh t.$ The induced structures on the tangent sphere bundle $T_{r}(G/K),$ for each $r>0,$ from these almost K\"ahler structures, allow us to obtain in Theorem \ref{tcontact} a class of $G$-invariant contact metric structures, which contains the standard structure.

 \section{The tangent bundle of a homogeneous Riemannian manifold}

\subsection{Homogeneous Riemannian manifolds} A connected homogeneous manifold $M$ can be described as a quotient manifold $G/K,$ where $G$ is a Lie group, which is supposed to be connected, acting transitively on $M,$ and $K$ is the isotropy subgroup of $G$ at some point $o\in M,$ the {\em origin} of $G/K.$ Denote by $\pi_{K}$ the projection $\pi_{K}\colon G\to G/K,$ $\pi_{K}(a) = aK,$ and by $\tau_{b},$ for each $b\in G,$ the translation $\tau_{b}\colon G/K\to G/K,$ $\tau_{b}(aK) = baK.$ If moreover $g$ is a $G$-invariant Riemannian metric on $M = G/K,$ then $(M,g)$ is said to be a {\em homogeneous Riemannian manifold}. 

When $G$ is compact, there exists an ${\rm Ad}(G)$-invariant inner product $\langle\cdot,\cdot\rangle$ on the Lie algebra ${\mathfrak g}$ of $G$ and we have the reductive decomposition ${\mathfrak g} = {\mathfrak k}\oplus {\mathfrak m},$ ${\mathfrak m}$ being the $\langle\cdot,\cdot\rangle$-orthogonal complement of the Lie algebra ${\mathfrak k}$ of $K.$ Then the restriction $\langle\cdot,\cdot\rangle_{\mathfrak m}$ of $\langle\cdot,\cdot\rangle$ to ${\mathfrak m}$ determines a $G$-invariant metric $g$ on $M$ and $(M = G/K,g)$ is said to be a {\em normal} homogeneous Riemannian manifold.

Fixed a reductive decomposition ${\mathfrak g} = {\mathfrak k}\oplus {\mathfrak m},$ the differential map $(\pi_{K})_{*e}$ of $\pi_{K}$ at the identity element $e$ of $G$ gives an isomorphism of ${\mathfrak m}$ onto $T_{o}(G/K).$ In what follows, $T_{o}(G/K)$ is identified with ${\mathfrak m}$ via $(\pi_{K})_{*e}.$ It is clear that there exists a sufficiently small neighborhood $O_{\mathfrak m}\subset {\mathfrak m}$ of zero in ${\mathfrak m}$ such that $\exp(O_{\mathfrak m})$ is a submanifold of $G$ and the mapping $\pi_{K\mid\exp(O_{\mathfrak m})}$ is a diffeomorphism onto a neighborhood ${\mathcal U}_{o}$ of $o.$ Hence, for each $\mu\in {\mathfrak m},$ a vector field $\mu^{\tau}$ on ${\mathcal U}_{o}$ is defined as
\begin{equation}\label{tautau}
\mu^{\tau}_{(\exp x) K} = (\tau_{\exp x})_{*o}\mu,\quad \mbox{\rm for all $x\in O_{\mathfrak m},$}
\end{equation}
 which satisfies $[\mu_{1}^{\tau},\mu^{\tau}_{2}]_{o} = [\mu_{1},\mu_{2}]_{\mathfrak m}$ (see \cite{N}), where $[\cdot,\cdot]_{\mathfrak m}$ denotes the ${\mathfrak m}$-component of $[\cdot,\cdot].$ 
 
 Let $\alpha\colon {\mathfrak m}\times {\mathfrak m}\to {\mathfrak m}$ be the ${\rm Ad}(K)$-invariant bilinear function that determines the Levi-Civita connection $\nabla$ of $g,$ given by $\alpha(\mu_{1},\mu_{2}) = \nabla_{\mu_{1}}\mu_{2}^{\tau},$ for all $\mu_{1},\mu_{2}\in {\mathfrak m}.$ Then, using the Koszul formula, we have 
\begin{equation}\label{nabla}
\alpha(\mu_{1},\mu_{2}) = \frac{1}{2}[\mu_{1},\mu_{2}]_{\mathfrak m} + {\mathfrak U}(\mu_{1},\mu_{2}),
\end{equation} 
 where ${\mathfrak U}$ is the symmetric bilinear function on ${\mathfrak m}\times{\mathfrak m}$ such that
\[
2\langle{\mathfrak U}(\mu_{1},\mu_{2}),\mu_{3}\rangle = \langle[\mu_{3},\mu_{1}]_{\mathfrak m},\mu_{2}\rangle + \langle[\mu_{3},\mu_{2}]_{\mathfrak m},\mu_{1}\rangle.
\]
 When ${\mathfrak U} = 0,$ $(G/K,g)$ is said to be {\em naturally reductive}.
 
 \subsection{Tangent bundles and tangent spheres bundles} On the trivial vector bundle $G\times {\mathfrak m}$ consider two Lie group actions which commute on it: the left
$G$-action, $l_b \colon (a,x)\mapsto (ba,x)$ and the right
$K$-action $r_k \colon (a,x)\mapsto (ak,{\rm Ad}_{k^{-1}}x)$. Let
$\pi \colon G\times {\mathfrak m}\to G\times_K {\mathfrak m},$ $(a,x)\mapsto [(a,x)],$ be the natural projection for this right
$K$-action. Then $\pi$ is $G$-equivariant and, using that the linear isotropy group $\{(\tau_{k})_{*o} \, : k\in K\}$ acting on $T_{o}(G/K)$ corresponds under $(\pi_{K})_{*e}$ with ${\rm Ad}(K)$ on ${\mathfrak m},$ the mapping $\phi$ given by
\begin{equation}\label{eq.phi}
\phi \colon G\times_K {\mathfrak m}\to T(G/K),
\quad
[(a,x)]\mapsto (\tau_{a})_{*o}x,
\end{equation}
is a $G$-equivariant diffeomorphism which allows us to identify $T(G/K)$ with $G\times_{K} {\mathfrak m}.$

 Because $\pi$ is a submersion, each vector of $T_{[(a,x))]}(G\times_{K}{\mathfrak m}),$ $(a,x)\in G\times {\mathfrak m},$ can be written as $\pi_{*(a,x)}(\mu^{\tt l}_{a},u_{x}),$ for some $\mu\in {\mathfrak g}$ and $u\in {\mathfrak m},$ where $\mu^{\tt l}$ denotes the left-invariant vector field on $G$ such that $\mu^{\tt l}_{e} = \mu\in T_{e}G\cong {\mathfrak g}.$ Put  $\mu = \mu_{\mathfrak k} + \mu_{\mathfrak m}\in {\mathfrak k}\oplus{\mathfrak m}.$ Then, $\pi_{*(a,x)}(\mu^{\tt l}_{a},u_{x})  = \pi_{*(a,x)}( (\mu_{\mathfrak k})^{\tt l}_{a},0)) + \pi_{*(a,x)}(((\mu_{\mathfrak m})^{\tt l}_{a},u_{x}),$ being $u_{x} = \alpha'(0)$ where $\alpha(t) = x + tu.$ Since 
\[
\pi_{*(a,x)}((\mu_{\mathfrak k})^{\tt l}_{a},0) = \frac{d}{dt}_{\mid t = 0}\pi(a\exp t\mu_{\mathfrak k},x) = \frac{d}{dt}_{\mid t = 0}\pi(a,{\rm Ad}_{\exp t\mu_{\mathfrak k}}x) =\pi_{*(a,x)}(0,[\mu_{\mathfrak k},x]_{x}),
\]
it follows that 
\begin{equation}\label{kk}
\pi_{*(a,x)}(\mu^{\tt l}_{a},u_{x})  = \pi_{*(a,x)}((\mu_{\mathfrak m})^{\tt l}_{a},u_{x} + [\mu_{\mathfrak k},x]_{x})
\end{equation}
and we have
\begin{equation}\label{lGm}
 T_{[(a,x)]}(G\times_{K}{\mathfrak m}) = \{\pi_{*(a,x)}(\mu^{\tt l}_{a},u_{x})\colon (\mu,u)\in {\mathfrak m}\times {\mathfrak m}\},\quad (a,x)\in G\times{\mathfrak m}.
 \end{equation}
Next, we obtain the pull-back form $\theta: = \phi^{*}\theta$ on $G\times_{K}{\mathfrak m}$ of the {\em canonical} $1$-form $\theta$ on the tangent bundle $\pi^{T}\colon T(G/K)\to G/K,$ defined by
 \[
 \theta_{u}(X) = g(u,(\pi^{T})_{*u}X),\quad u\in T_{p}(G/K),\quad p\in G/K, \quad  X\in T_{u}(T(G/K)).
 \]
 \begin{lemma} The canonical $1$-form $\theta$ on $G\times_{K}{\mathfrak m}$ is $G$-invariant and it is determined by
 \begin{equation}\label{1canonical}
 \theta_{[(e,x)]}(\pi_{*(e,x)}(\mu,u_{x})) = \langle x,\mu\rangle.
 \end{equation}
 \end{lemma}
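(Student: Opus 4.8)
The plan is to deduce both assertions from the corresponding facts about the canonical $1$-form on $T(G/K)$, transported through the $G$-equivariant diffeomorphism $\phi$ of \eqref{eq.phi}. For the invariance, I would first record that the natural lift to $T(G/K)$ of the $G$-action on $G/K$ is $b\cdot v=(\tau_{b})_{*}v$, that it covers the action of $\tau_{b}$ on $G/K$ (so $\pi^{T}\circ(\tau_{b})_{*}=\tau_{b}\circ\pi^{T}$), and that $\phi$ intertwines it with the left action $[(a,x)]\mapsto[(ba,x)]$ on $G\times_{K}{\mathfrak m}$ --- this last point being exactly the stated $G$-equivariance of $\phi$. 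Since $\tau_{b}$ is an isometry of $(G/K,g)$, the defining formula $\theta_{v}(X)=g(v,(\pi^{T})_{*v}X)$ gives at once $((\tau_{b})_{*})^{*}\theta=\theta$ on $T(G/K)$, and pulling back through $\phi$ yields the $G$-invariance of $\theta=\phi^{*}\theta$ on $G\times_{K}{\mathfrak m}$. Moreover each point of $G\times_{K}{\mathfrak m}$ has the form $b\cdot[(e,x)]=[(b,x)]$, and by \eqref{lGm} each tangent vector at $[(e,x)]$ is $\pi_{*(e,x)}(\mu,u_{x})$ with $(\mu,u)\in{\mathfrak m}\times{\mathfrak m}$; so invariance, together with the value of $\theta$ at the points $[(e,x)]$, $x\in{\mathfrak m}$, determines $\theta$ completely, and it remains only to establish \eqref{1canonical}. (Well-definedness is not in question here, since $\phi^{*}\theta$ is a genuine $1$-form.)

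For the formula, I would use that $\pi^{T}\circ\phi\circ\pi=\pi_{K}\circ\mathrm{pr}_{G}$, where $\mathrm{pr}_{G}\colon G\times{\mathfrak m}\to G$ is the first projection; indeed $\pi^{T}(\phi([(a,x)]))=\pi^{T}((\tau_{a})_{*o}x)=aK$. Differentiating at $(e,x)$ along $(\mu^{\tt l}_{e},u_{x})$ with $\mu\in{\mathfrak m}$, and noting that $(\mathrm{pr}_{G})_{*(e,x)}(\mu^{\tt l}_{e},u_{x})=\mu$, one obtains
\[ (\pi^{T})_{*x}\bigl(\phi_{*}\,\pi_{*(e,x)}(\mu,u_{x})\bigr)=(\pi_{K})_{*e}\mu=\mu , \]
the last step being the identification ${\mathfrak m}\cong T_{o}(G/K)$. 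Since $\phi([(e,x)])=(\tau_{e})_{*o}x$ is precisely $x\in T_{o}(G/K)\cong{\mathfrak m}$, unwinding the pull-back together with the definition of $\theta$ on $T(G/K)$ and the fact that $g_{o}$ corresponds to $\langle\cdot,\cdot\rangle$ on ${\mathfrak m}$ gives
\[ \theta_{[(e,x)]}\bigl(\pi_{*(e,x)}(\mu,u_{x})\bigr)=\theta_{x}\bigl(\phi_{*}\,\pi_{*(e,x)}(\mu,u_{x})\bigr)=g_{o}(x,\mu)=\langle x,\mu\rangle , \]
which is \eqref{1canonical}.

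I do not expect a genuine obstacle: the only delicate part is keeping the identifications in play straight --- $T_{o}(G/K)\cong{\mathfrak m}$ via $(\pi_{K})_{*e}$, $T(G/K)\cong G\times_{K}{\mathfrak m}$ via $\phi$, and the reduction \eqref{kk}--\eqref{lGm} that lets the slot $\mu$ be taken in ${\mathfrak m}$ rather than in ${\mathfrak g}$ --- together with the observation that the vertical part $u_{x}$ drops out because it lies in $\ker(\mathrm{pr}_{G})_{*(e,x)}$ and hence projects to zero under $\pi^{T}$. In substance the lemma just says that the tautological form, read on $G\times_{K}{\mathfrak m}$, is the fibrewise linear functional $\langle x,\cdot\rangle$ at the point $[(e,x)]$, and $G$-invariance propagates this to every point.
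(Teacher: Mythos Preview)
Your proposal is correct and follows essentially the same route as the paper: both unwind the pull-back $\phi^{*}\theta$ using $\pi^{T}\circ\phi\circ\pi=\pi_{K}\circ\mathrm{pr}_{G}$ and the identification $g_{o}=\langle\cdot,\cdot\rangle_{\mathfrak m}$. The only organizational difference is that the paper computes directly at a general point $[(a,x)]$, obtaining $g_{\tau_{a}(o)}((\tau_{a})_{*o}x,(\tau_{a})_{*o}\mu)=\langle x,\mu\rangle$ and thereby establishing invariance and the formula simultaneously, whereas you argue invariance first and then evaluate at $[(e,x)]$.
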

\begin{proof} Putting $\Pi = \phi\circ \pi,$ we get
\[
\theta_{[(a,x)]}(\pi_{*(a,x)}(\mu^{\tt l}_{a},u_{x})) = \theta_{(\tau_{a})_{*o}x}(\Pi_{*(a,x)}(\mu^{\tt l}_{a},u_{x})) = g_{\tau_{a}(o)}((\tau_{a})_{*o}x, (\pi^{T}\circ \Pi)_{*(a,x)}(\mu^{\tt l}_{a},u_{x})).
\]
But, under the identification $T_{o}(G/K)\cong {\mathfrak m},$
\[
(\pi^{T}\circ \Pi)_{*(a,x)}(\mu^{\tt l}_{a},u_{x}) = \frac{d}{dt}_{\mid t = 0}(\pi^{T}\circ \Pi)(a\exp t\mu, x + t u) = \frac{d}{dt}_{\mid t = 0}(\pi_{K}(a\exp t\mu))  = (\tau_{a})_{*o}\mu.
\]
Hence, because $g$ is $G$-invariant, we obtain (\ref{1canonical}).
\end{proof}

Finally, we consider the sphere ${\mathcal S}_{\mathfrak m}(r) = \{x\in {\mathfrak m}\colon \langle x,x\rangle = r^{2}\}$ of radius $r>0$ in ${\mathfrak m}$ with respect to the ${\rm Ad}(K)$-invariant inner product $\langle\cdot,\cdot\rangle$ that determines the $G$-invariant metric $g.$ Then, taking into account that for each $[(a,x))]\in G\times_{K}{\mathfrak m}$ we get
\[
\langle x,x\rangle = g_{\tau_{a}(o)}((\tau_{a})_{*o}x, (\tau_{a})_{*o}x) = g_{\tau_{a}(o)}(\phi[(a,x)],\phi[(a,x)]),
 \]
 it follows from (\ref{eq.phi}) that
 \begin{equation}\label{TrSm}
T_{r}(G/K) = \phi(G\times_{K}{\mathcal S}_{\mathfrak m}(r)).
 \end{equation}
 Moreover, from (\ref{lGm}), for each $(a,x)\in G\times {\mathcal S}_{\mathfrak m}(r),$ we have
 \[
 T_{[(a,x)]}(G\times_{K}{\mathcal S}_{\mathfrak m}(r) )= \{\pi_{*(a,x)}(\mu^{\tt l}_{a},u_{x})\colon (\mu,u)\in {\mathfrak m}\times {\mathfrak m}\;\mbox{\rm and}\;\langle u,x\rangle = 0\}.
 \]
 
\subsection{The standard structures}
 
 On the tangent bundle $\pi^{T}\colon TM\to M$ of a Riemannian manifold $(M,g),$ consider the {\em horizontal distribution} ${\mathcal H}\colon u\in TM\to {\mathcal H}_{u}\subset T_{u}TM,$ where ${\mathcal H}_{u}$ is the space of all horizontal lifts of tangent vectors in $T_{p}M,$ $p = \pi^{T}(u),$ obtained by parallel translation with respect to its Levi-Civita connection $\nabla.$ (See for example \cite[Ch. 9]{Bl} and references inside for more information.) Then $TM$ splits into the direct decomposition $T_{u}TM = {\mathcal H}_{u}\oplus {\mathcal V}_{u},$ where ${\mathcal V}_{u}$ is the vertical space ${\mathcal V}_{u} = {\rm Ker}(\pi^{T})_{*u}.$ The standard almost complex structure $J^{S}$ on $TM$ is defined by
\begin{equation}\label{J}
J^{S}X^{\tt h} = X^{\tt v},\quad J^{S} X^{\tt v} = -X^{\tt h},
\end{equation}
for all vector field $X$ on $M$, where $X^{\tt h}$ and $X^{\tt v}$ are the horizontal and vertical lifts of $X,$ respectively. For each $\mu\in T_{u}TM$ denote by $\mu^{\tt ver}$ (resp., $\mu^{\tt hor})$ the ${\mathcal V}_{u}$-component (resp., ${\mathcal H}_{u}$-component) of $\mu.$ Then the  connection map $K$ of $\nabla$ is defined by $K_{u}(\mu) = \iota_{u}(\mu^{\tt ver}),$ where $\iota_{u}$ is the projection $\iota_{u}\colon T_{u}TM\to T_{p}M$ such that $\iota_{u}(\mu) = 0,$ for all $\mu\in {\mathcal H}_{u}$ and $\iota_{u}(v^{\tt v}_{u}) = v.$ This satisfies $K_{X_{p}}(X_{*p}u) = \nabla_{u}X$ and the Sasaki metric $g^{S}$ on $TM$ is given by
\begin{equation}\label{Sasaki}
g^{S}(\mu_{1},\mu_{2}) = (g((\pi^{T})_{*}\mu_{1},(\pi^{T})_{*}\mu_{2}) + g(K(\mu_{1}),K(\mu_{2})))\circ \pi^{T},
\end{equation}
for all $\mu_{1},\mu_{2}\in T(TM)$ such that $\pi^{T}\mu_{1} = \pi^{T}\mu_{2}.$ Then $g^{S}(X^{\tt h},Y^{\tt h}) = g^{S}(X^{\tt v},Y^{\tt v}) = g(X,Y)\circ \pi^{T}$ and $g^{S}(X^{\tt v},Y^{\tt h}) = 0,$ for all vector fields $X,Y$ on $M.$ Therefore, $g^{S}$ is a Hermitian metric with respect to $J^{S}.$ The pair $(J^{S},g^{S})$ is known as the {\em standard almost Hermitian structure}.
 
 Now suppose that $(M= G/K,g)$ is a homogeneous Riemannian manifold and denote by $\widetilde{\pi}$ the projection $\widetilde{\pi}\colon G\times_{K}{\mathfrak m}\to G/K.$ Then, $\widetilde{\pi} = \pi^{T}\circ \phi.$ Let $\widetilde{\mathcal V}$ and $\widetilde{\mathcal H}$ be the distributions $\widetilde{\mathcal V}:=(\phi^{-1})_{*}({\mathcal V}) = {\rm Ker}\;\widetilde{\pi}_{*}$ and $\widetilde{\mathcal H} :=(\phi^{-1})_{*}({\mathcal H})$ in $T(G\times_{K}{\mathfrak m}).$ Because $\nabla$ is $G$-invariant, ${\mathcal V},$ $\widetilde{\mathcal V},$ ${\mathcal H}$ and $\widetilde{\mathcal H}$ are $G$-invariant distributions. 
 
 \begin{lemma}\label{lGK} Under the $G$-equivariant diffeomorphism $\phi\colon G\times_{K}{\mathfrak m}\to T(G/K),$ the standard almost Hermitian structure $(J^{S},g^{S})$ on $G\times_{K}{\mathfrak m}$ is determined by
 \begin{equation}\label{JG}
\begin{array}{l}
J^{S}_{[(e,x)]}\pi_{*(e,x)}(\mu,u_{x}) = \pi_{*(e,x)]}(-u-\alpha(\mu,x),\mu_{x} + \alpha(u + \alpha(\mu,x),x)_{x}),\\[0.4pc]
g^{S}(\pi_{*(e,x)}(\mu,u_{x}),\pi_{*(e,x)}(\nu,v_{x})) = \langle\mu,\nu\rangle + \langle u + \alpha(\mu,x), v + \alpha(\nu,x)\rangle,
\end{array}
\end{equation}
for all $x,\mu, \nu, u, v\in {\mathfrak m}.$ Moreover, the outward normal unit vector field $N$ and the standard vector field $\xi^{S}$ on $G\times_{K}{\mathcal S}_{\mathfrak m}(r),$ for each $r>0,$ are determined by
\begin{equation}\label{Nxi}
N_{[(e,x)]} = \frac{1}{r}\pi_{*(e,x)}(0,x_{x}),\quad \xi^{S}_{[(e,x)]} = \frac{1}{r}\pi_{*(e,x)}(x,-\alpha(x,x)_{x}),\quad \mbox{for all $x\in {\mathcal S}_{\mathfrak m}(r).$}
\end{equation}
\end{lemma}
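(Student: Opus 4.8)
\medskip

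The plan is to work at the origin, i.e.\ at points $[(e,x)]$ with $x\in{\mathfrak m}$ (equivalently $x\in{\mathcal S}_{\mathfrak m}(r)$ for the statements about $N$ and $\xi^S$), and to identify explicitly the horizontal and vertical lifts of a tangent vector. By $G$-equivariance of $\phi$ and $G$-invariance of $\nabla$ (hence of ${\mathcal V},\widetilde{\mathcal V},{\mathcal H},\widetilde{\mathcal H}$), it suffices to compute there; the values elsewhere follow by translating with $(\tau_a)_*$. So the real content is a computation in the fibre over $o$.

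\medskip

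First I would pin down the vertical distribution: since $\widetilde\pi=\pi^T\circ\phi$ and, by the computation in the proof of the previous lemma, $(\pi^T\circ\Pi)_{*(e,x)}(\mu^{\tt l}_e,u_x)=(\tau_e)_{*o}\mu$ is identified with $\mu\in{\mathfrak m}$, we get $\widetilde\pi_{*(e,x)}\pi_{*(e,x)}(\mu,u_x)=\mu$. Hence $\widetilde{\mathcal V}_{[(e,x)]}=\{\pi_{*(e,x)}(0,u_x):u\in{\mathfrak m}\}$, and the vertical lift of $u\in T_o(G/K)\cong{\mathfrak m}$ is $u^{\tt v}=\pi_{*(e,x)}(0,u_x)$ — this is just the derivative of the curve $t\mapsto[(e,x+tu)]$. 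Next I would identify the horizontal lift. The horizontal lift $\mu^{\tt h}$ of $\mu\in{\mathfrak m}$ is the unique vector of the form $\pi_{*(e,x)}(\mu,(c(\mu,x))_x)$ projecting to $\mu$ and annihilated by the connection map $K$. Using $K_{X_o}(X_{*o}\mu)=\nabla_\mu X$ together with the description $\alpha(\mu_1,\mu_2)=\nabla_{\mu_1}\mu_2^\tau$ of the Levi-Civita connection from \eqref{nabla}, one finds that the correction term is $c(\mu,x)=-\alpha(\mu,x)$; equivalently, along the curve $(\exp t\mu)K$ the vector field $\mu\mapsto$ "parallel field" has vertical derivative $-\alpha(\mu,x)$. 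Thus
\[
\mu^{\tt h}=\pi_{*(e,x)}(\mu,-\alpha(\mu,x)_x),\qquad u^{\tt v}=\pi_{*(e,x)}(0,u_x).
\]
From these two formulas, the general vector $\pi_{*(e,x)}(\mu,u_x)$ decomposes as $\mu^{\tt h}+(u+\alpha(\mu,x))^{\tt v}$, and the connection map reads $K\bigl(\pi_{*(e,x)}(\mu,u_x)\bigr)=u+\alpha(\mu,x)$, while $\widetilde\pi_*\bigl(\pi_{*(e,x)}(\mu,u_x)\bigr)=\mu$.

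\medskip

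With the horizontal/vertical split in hand, the two displayed identities in \eqref{JG} are immediate substitutions. For $J^S$: apply \eqref{J}, namely $J^S\mu^{\tt h}=\mu^{\tt v}$ and $J^S(u+\alpha(\mu,x))^{\tt v}=-(u+\alpha(\mu,x))^{\tt h}$, and then re-express the result in the $\pi_{*(e,x)}(\cdot,\cdot)$-coordinates using $w^{\tt v}=\pi_{*(e,x)}(0,w_x)$ and $w^{\tt h}=\pi_{*(e,x)}(w,-\alpha(w,x)_x)$ with $w=u+\alpha(\mu,x)$; collecting terms gives exactly the stated formula for $J^S_{[(e,x)]}\pi_{*(e,x)}(\mu,u_x)$. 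For $g^S$: plug the projections $\widetilde\pi_*$ and $K$ just computed into the defining formula \eqref{Sasaki}, giving $\langle\mu,\nu\rangle+\langle u+\alpha(\mu,x),v+\alpha(\nu,x)\rangle$.

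\medskip

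Finally, for \eqref{Nxi}: the sphere bundle $G\times_K{\mathcal S}_{\mathfrak m}(r)$ is cut out inside $G\times_K{\mathfrak m}$ by $\langle x,x\rangle=r^2$, and by \eqref{Sasaki}–\eqref{JG} the vector $\frac1r\,x^{\tt v}=\frac1r\pi_{*(e,x)}(0,x_x)$ is a $g^S$-unit vector, purely vertical, hence normal to the tangent space $\{\pi_{*(e,x)}(\mu,u_x):\langle u,x\rangle=0\}$; it points outward because it is the radial direction in the fibre. This is $N$. Then $\xi^S:=-J^S N$ is computed by applying the first line of \eqref{JG} with $\mu=0$, $u=\frac1r x$: we get $J^S\bigl(\frac1r x^{\tt v}\bigr)=-\frac1r x^{\tt h}=-\frac1r\pi_{*(e,x)}(x,-\alpha(x,x)_x)$, so $\xi^S_{[(e,x)]}=\frac1r\pi_{*(e,x)}(x,-\alpha(x,x)_x)$, as claimed. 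The one genuinely delicate point — and the step I would be most careful about — is the sign and precise form of the correction term $-\alpha(\mu,x)$ in the horizontal lift, i.e.\ correctly relating the connection map $K$ (which involves differentiating the field $X$ in the direction $\mu$, producing $\nabla_\mu X=\alpha(\mu,X_o)$) to the coordinates on $T(G\times_K{\mathfrak m})$; once that identification is fixed, everything else is bookkeeping.
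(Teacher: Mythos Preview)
Your proposal is correct and follows essentially the same route as the paper: identify the vertical lift as $\pi_{*(e,x)}(0,u_x)$, compute the vertical component of $\pi_{*(e,x)}(\mu,0)$ via the section $x^{\tau}$ and the identity $K_{x}((x^{\tau})_{*o}\mu)=\nabla_{\mu}x^{\tau}=\alpha(\mu,x)$ to obtain $\mu^{\tt h}=\pi_{*(e,x)}(\mu,-\alpha(\mu,x)_x)$, and then substitute into the defining formulas \eqref{J} and \eqref{Sasaki}. The only cosmetic difference is that the paper makes the use of the local section $x^{\tau}$ explicit in establishing the correction term $-\alpha(\mu,x)$, whereas you package this step a bit more abstractly; your flagged ``delicate point'' is exactly where the paper spells out the computation.
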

 \begin{proof} Taking into account that $(\widetilde{\pi}\circ \pi)(a,x) = \pi_{K}(a),$ for all $(a,x)\in G\times {\mathfrak m},$ and using (\ref{lGm}), we have $\widetilde{\mathcal V}_{[(a,x)]} = \{\pi_{*(a,x)}(0,u_{x})\;:\; u\in {\mathfrak m}\}$ and, moreover,
\begin{equation}\label{verticallift}
\mu^{\tt v}_{[(e,x)]} = \pi_{*(e,x)}(0,\mu_{x}),\quad \mu\in {\mathfrak m}.
\end{equation}

For each $x\in {\mathfrak m},$ consider $x^{\tau}$ as a local section of $T(G/K).$ Then
\begin{equation}\label{xi+}
(x^{\tau})_{*o}\mu = \frac{d}{dt}_{\mid t = 0}x^{\tau}(\exp t\mu) = \frac{d}{dt}_{\mid t = 0}(\tau_{\exp t\mu_{*o}})x = \phi_{*[(e,x)]}(\pi_{*(e,x)}(\mu,0)),\quad \mu\in {\mathfrak m}.
\end{equation}
Since $K_{x}((x^{\tau})_{*o}\mu) = \nabla_{\mu}x^{\tau} = \alpha(\mu,x),$ we have $((x^{\tau})_{*o}\mu)^{\tt ver} = \alpha(\mu,x)_{x}\in T_{x}T_{o}M.$ Then from (\ref{xi+}), the vertical component $(\pi_{*(e,x)}(\mu,0))^{\tt ver}$ of $\pi_{*(e,x)}(\mu,0)$ in the decomposition $T_{[(e,x)]}(G\times_{K}{\mathfrak m}) = \widetilde{\mathcal H}_{[(e,x)]}\oplus\widetilde{\mathcal V}_{[(e,x)]}$ is  given by
\[
(\pi_{*(e,x)}(\mu,0))^{\tt ver} = (\phi^{-1})_{*x}\alpha(\mu,x)_{x} = \pi_{*(e,x)}(0,\alpha(\mu,x)_{x}).
\]
Hence, applying (\ref{verticallift}), it follows, for all $(\mu,u)\in {\mathfrak m}\times {\mathfrak m},$ that
\[
(\pi_{*(e,x)}(\mu,u_{x}))^{\tt ver} = \pi_{*(e,x)}(0,u_{x} + \alpha(\mu,x)_{x}),\quad (\pi_{*(e,x)}(\mu,u_{x}))^{\tt hor}  =  \pi_{*(e,x)}(\mu,-\alpha(\mu,x)_{x}).
\]
 On the other hand, because $\widetilde{\pi}_{*[(e,x)]}(\pi_{*(e,x)}(\mu,u_{x})) = \mu\in {\mathfrak m}\cong T_{o}M,$ 
\begin{equation}\label{vh}
\mu^{\tt h}_{[(e,x)]} = (\pi_{*(e,x)}(\mu,u_{x}))^{\tt hor}_{[(e,x)]} = \pi_{*(e,x)}(\mu, -\alpha(\mu,x)_{x}).
\end{equation}
Then $\tilde{\mathcal H}$ can be expressed as $\widetilde{\mathcal H}_{[(e,x)]} = \{\pi_{*(e,x)}(\mu,-\alpha(\mu,x)_{x})\;:\;\mu\in {\mathfrak m}\}.$ Now, applying (\ref{verticallift}) and (\ref{vh}) in (\ref{J}) and (\ref{Sasaki}), it follows (\ref{JG}) and hence, also (\ref{Nxi}).
\end{proof}

 \section{The tangent bundle of a symmetric space of compact type}\label{tres}

  Let $G/K$ be a Riemannian symmetric space of compact type and let ${\mathfrak g} = {\mathfrak k}\oplus {\mathfrak m}$ be the decomposition of ${\mathfrak g}$ induced by the symmetric structure. Then, $[{\mathfrak m},{\mathfrak m}]\subset{\mathfrak k}.$ Because $G$ is compact and semisimple, the Killing form $B\colon{\mathfrak g}\times{\mathfrak g}\to {\mathfrak g},$ $(\mu,\nu)\mapsto {\rm tr}({\rm ad}_{\mu}\circ {\rm ad}_{\nu})$ is negative definite. In the sequel, we will assume that $G/K$ is equipped with the $G$-invariant metric that determines the inner product $\langle\cdot,\cdot\rangle:=-cB$ on ${\mathfrak m},$ for an arbitrary $c>0.$
  
  Let us review some details about restricted roots. (For the corresponding theory, see for example \cite[Ch. III and Ch. VII]{He} and \cite[Section V.2]{Loos}.) If $G/K$ has rank ${\mathbf r},$ then, fixing an ${\mathbf r}$-dimensional Cartan subspace ${\mathfrak a}$ of ${\mathfrak m},$ there exists a subalgebra ${\mathfrak t}_{\mathfrak a}$ of ${\mathfrak g}$ containing ${\mathfrak a}$ such that its complexification ${\mathfrak t}_{\mathfrak a}^{\mathbb C}$ is a Cartan subalgebra of ${\mathfrak g}^{\mathbb C}$ and we have the root space decomposition
\[
{\mathfrak g}^{\mathbb C} = {\mathfrak t}_{\mathfrak a}^{\mathbb C} \oplus \sum_{\alpha\in \Delta}{\mathfrak g}_{\alpha},
\]
where $\Delta$ is the root system of ${\mathfrak g}^{\mathbb C}$ with respect to ${\mathfrak t}_{\mathfrak a}^{\mathbb C}$  and, for each $\alpha\in \Delta,$
\[ 
{\mathfrak g}_{\alpha} = \{\xi\in {\mathfrak g}^{\mathbb C}\;:\;{\rm ad}_{t}\xi = \alpha(t)\xi,\;t\in {\mathfrak t}^{\mathbb C}\}\quad{\rm with}\quad \dim_{\mathbb C}{\mathfrak g}_{\alpha} = 1.
\] 
 
 To the set $\Sigma = \{\lambda\in ({\mathfrak a}^{\mathbb C})^{*}\colon\lambda = \alpha_{\mid {\mathfrak a}^{\mathbb C}},\,\alpha\in \Delta\setminus \Delta_{0}\}$ of non-vanishing restrictions of roots to ${\mathfrak a}^{\mathbb C},$ where $\Delta_{0} = \{\alpha\in \Delta\colon\alpha_{\mid {\mathfrak a}^{\mathbb C}} = 0\},$ is known as the set of {\em restricted roots} with respect to ${\mathfrak a}.$ Denote by $\Sigma^{+}$ the subset of positive restricted roots and by $m_{\lambda}$ the multiplicity of $\lambda\in \Sigma^{+},$ that is, $m_{\lambda} = {\rm card}\{\alpha\in \Delta\colon \alpha_{\mid {\mathfrak a}^{\mathbb C}} = \lambda\}.$ For each $\lambda\in \Sigma^{+},$ put 
  \[
{\mathfrak m}_\lambda =
\big\{\eta\in{\mathfrak m}:\operatorname{ad}^2_w(\eta)
=\lambda^2(w)\eta,\ \forall w\in{\mathfrak a}\big\}, \quad
{\mathfrak k}_\lambda =
\big\{\zeta\in{\mathfrak k}: \operatorname{ad}^2_w(\zeta)=\lambda^2(w)\zeta,\
\forall w\in{\mathfrak a}\big\}.
\]
Then ${\mathfrak m}_{\lambda}={\mathfrak m}_{-\lambda}$,
${\mathfrak k}_{\lambda}={\mathfrak k}_{-\lambda}$,
${\mathfrak m}_0={\mathfrak a}$ and ${\mathfrak k}_0$ equals to the centralizer ${\mathfrak h}$ of ${\mathfrak a}$ in
${\mathfrak k},$ that is,
\[
{\mathfrak h} = \{u\in {\mathfrak k}\;\colon [u,{\mathfrak a}] = 0\}.
\]
 By~\cite[Ch.\ VII, Lemma 11.3]{He}, the following
decompositions are direct and orthogonal:
\[
{\mathfrak m}={\mathfrak a}\oplus\sum_{\lambda\in\Sigma^+}{\mathfrak m}_\lambda,
\qquad
{\mathfrak k}={\mathfrak h}\oplus
\sum_{\lambda\in\Sigma^+}{\mathfrak k}_\lambda.
\]
Moreover, from \cite[Ch. VII, Lemma 2.3]{He}, we obtain that, for any $\xi_{\lambda}\in {\mathfrak m}_{\lambda},$ $\lambda\in\Sigma^+$, there exists a unique vector $\zeta_{\lambda}\in{\mathfrak k}_\lambda$ such that $[u,\xi_{\lambda}] = i\lambda(u)\zeta_{\lambda}$ and $[u,\zeta_{\lambda}] = -i\lambda(u)\xi_{\lambda},$ for all $u\in {\mathfrak a}.$ Then,
\begin{equation}\label{eq.ms4.3}
[u,\xi_\lambda]=  -\lambda_{\mathbb R}(u)\zeta_\lambda,
\quad [u,\zeta_\lambda]= \lambda_{\mathbb R}(u)\xi_\lambda,
\end{equation}
where $\lambda_{\mathbb R}$ is the linear function $\lambda_{\mathbb R} \colon {\mathfrak a}\to{\mathbb R}$,
$\lambda\in\Sigma^+$, defined by the relation ${\rm i}\lambda_{\mathbb R}=\lambda$. Note that, since the Lie algebra ${\mathfrak g}$ is compact, $\lambda({\mathfrak a})\subset {\rm i} {\mathbb R}$. In particular, $\dim{\mathfrak m}_{\lambda} = \dim{\mathfrak k}_{\lambda} = m_{\lambda}.$

Each restricted root $\lambda$ defines a hyperplane $\lambda(w) = 0$ in the vector space ${\mathfrak a}$ and these hyperplanes divide ${\mathfrak a}$ into finitely many connected components called Weyl chambers (see \cite[Ch. VII]{He}). Next, we consider the {\em Weyl chamber} $W$ given by
\[
 W = \{w\in {\mathfrak a}\;:\; \lambda_{\mathbb R}(w)>0,\;\mbox{\rm for each}\;\lambda\in \Sigma^{+}\}.
 \]
 Because each  ${\rm Ad}(K)$-orbit in ${\mathfrak m}$ intersects the Cartan subspace ${\mathfrak a},$ it follows that ${\rm Ad}(K){\mathfrak a}= {\mathfrak m}$ and the open connected subset ${\mathfrak m}^{R}:= {\rm Ad}(K)(W)$ is dense in ${\mathfrak m},$ called the {\em set of regular points} of ${\mathfrak m}.$ Hence $D(G/K):=\phi(G\times_{K}{\mathfrak m}^{R})$ is an open dense subset of $T(G/K).$ 

 Let $H$ be the closed subgroup of $K,$ with Lie algebra ${\mathfrak h},$ defined by
 \[
 H = \{k\in K\colon {\rm Ad}_{k} u = u,\;\mbox{\rm for all}\; u\in {\mathfrak a}\}
 \] 
 and consider the projection $\pi_{H}\times{\rm id}:G\times W\to G/H\times W.$ It follows that the mapping
\[
\chi\colon G/H\times W\to G\times_{K}{\mathfrak m}^{R},\quad (gH,w)\mapsto [(g,w)]
\]
is a well-defined $G$-equivariant diffeomorphism onto the open dense subset $G\times_{K}{\mathfrak m}^{R}$ of $G\times_{K}{\mathfrak m}$ with inverse mapping $\chi^{-1}$ given by $\chi^{-1}([(g,x)]) = (gkH,w),$ for all $g\in G,$ $k\in K$ and $w\in W,$ where $x = {\rm Ad}_{k}w\in {\mathfrak m}^{R}.$ Then $G/H\times W$ is identified with $D(G/K)$ via the $G$-equivariant diffeomorphism $\phi\circ \chi.$ 

Since the Cartan subspace ${\mathfrak a}$ of ${\mathfrak m}$ is ${\rm Ad}(H)$-invariant and we get ${\rm ad}^{2}_{w}\circ {\rm Ad}_{h} = {\rm Ad}_{h}\circ {\rm ad}^{2}_{w},$ for all $w\in {\mathfrak a}$ and $h\in H,$ the subspaces ${\mathfrak m}_{\lambda}$ and ${\mathfrak k}_{\lambda}$ are all ${\rm Ad}(H)$-invariant and ${\mathfrak g} = {\mathfrak h}\oplus \overline{\mathfrak m}$ is a reductive decomposition for the quotient manifold $G/H,$ where $\overline{\mathfrak m} = {\mathfrak m}\oplus \sum_{\lambda\in\Sigma^+}{\mathfrak k}_\lambda.$

Fixing in each subspace
${\mathfrak m}_\lambda$, $\lambda\in\Sigma^+$, some $\langle\cdot,\cdot\rangle$-orthonormal basis
$\{\xi_\lambda^s, s=1, \dotsc ,m_\lambda\},$ we take the unique basis, for each
$\lambda\in\Sigma^+,$ $\{\zeta_\lambda^s,\, s=1, \dotsc,m_\lambda\}$ of
${\mathfrak k}_\lambda$ such that for each pair
$(\xi_\lambda^s,\zeta_\lambda^s),$ the condition~(\ref{eq.ms4.3}) holds.
Then the basis $\{\zeta_\lambda^s,
s=1, \dotsc,m_\lambda\}$, $\lambda\in\Sigma^+$, of ${\mathfrak k}_\lambda$,
is also orthonormal. On the Cartan subspace ${\mathfrak a}$ we consider some orthonormal basis $\{X_1,\dotsc,X_{\mathbf r}\}$ and its associated coordinates $(x_1,\dotsc,x_{\mathbf r})$ on $W,$ that is, $x_{j}(w) = w_{j},$ $j = 1,\dots, {\mathbf r},$ where $w=\sum_{j=1}^{\mathbf r}w_{j}X_{j}\in W.$ Moreover, we shall also take $(x_{1},\dots ,x_{\mathbf r})$ as the smooth functions such that $x_{j}(aH,w) = w_{j}.$ Hence, under the identification of $T_{(o_{H},w)}(G/H\times W)$ with $\overline{\mathfrak m}\times T_{w}W$ via $(\pi_{H}\times {\rm id})_{*(e,w)},$ where $o_{H} = \{H\}$ is the origin of $G/H,$  it follows that the family
\[
\{(X_{j},0),(0,\frac{\partial}{\partial x_{j}}(x)),\;j = 1,\dots ,{\bf r};\;(\xi^{s}_{\lambda},0),(\zeta^{s}_{\lambda},0),\;s = 1,\dots ,m_{\lambda},\;\lambda\in \Sigma^{+}\}
\]
is a basis of $T_{(o_{H},w)}(G/H\times W).$ Moreover, using (\ref{kk}) the differential $\chi_{*(o_{H},w)}$ of $\chi$ at $(o_{H},w)\in G/H\times W,$ satisfies
\begin{equation}\label{f+}
\chi_{*(o_{H},w)}(\xi,u_{w}) = \pi_{*(e,w)}(\xi,u_{w}) = \pi_{*(e,w)}(\xi_{\mathfrak m}, u_{w} + [\xi_{\mathfrak k},w]_{w}),
\end{equation}
for all $\xi\in \overline{\mathfrak m}$ and $u_{w}\in T_{w}W,$ we have from (\ref{eq.ms4.3}) that
\begin{equation}\label{f++}
\begin{array}{l}
\chi_{*(o_{H},w)}(X_{j},0)  =  \pi_{*(e,w)}(X_{j}, 0),\quad \chi_{*(o_{H},w)}(0, \frac{\partial}{\partial x_{j}}(w))  = \pi_{*(e,w)}(0,(X_{j})_{w}),\\[0.4pc]
\chi_{*(o_{H},w)}(\xi^{s}_{\lambda},0)  =  \pi_{*(e,w)}(\xi^{s}_{\lambda}, 0),\quad \chi_{*(o_{H},w)}(\zeta^{s}_{\lambda},0) = \pi_{*(e,w)}(0,-\lambda_{\mathbb R}(w)(\xi^{s}_{\lambda})_{w}).
\end{array}
\end{equation}

Next, let ${\mathcal S}^{R}_{\mathfrak m}(r) = {\mathcal S}_{\mathfrak m}(r)\cap {\mathfrak m}^{R}$ be the set of regular points  of ${\mathcal S}_{\mathfrak m}(r),$ for some $r>0,$ and denote by $D_{r}(G/K)$ the open dense subset $D_{r}(G/K)= T_{r}(G/K)\cap D(G/K)$ of $T_{r}(G/K).$ Put ${\mathcal S}_{W}(r) = {\mathcal S}_{\mathfrak a}(r)\cap W\subset {\mathcal S}^{R}_{\mathfrak m}(r),$ where ${\mathcal S}_{\mathfrak a}(r)$ is the $r$-sphere in ${\mathfrak a}$ with respect to $\langle\cdot,\cdot\rangle.$ 
  
  \begin{proposition}\label{ll1} We have
 \[
 D_{r}(G/K) = (\phi\circ \chi)(G/H\times {\mathcal S}_{W}(r)).
 \] 
  \end{proposition}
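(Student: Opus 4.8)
The plan is to reduce the statement to a set-theoretic identity in $G\times_{K}{\mathfrak m}$, exploiting that $\phi$ and $\chi$ are diffeomorphisms and that ${\mathcal S}_{\mathfrak m}(r)$ and ${\mathfrak m}^{R}$ are ${\rm Ad}(K)$-invariant subsets of ${\mathfrak m}$. No serious analysis is involved; the work is entirely bookkeeping with the two identifications $\phi$ and $\chi$.

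First I would use that $\phi\colon G\times_{K}{\mathfrak m}\to T(G/K)$ is a bijection, so that, by $(\ref{TrSm})$ together with the definitions of $D(G/K)$ and $D_{r}(G/K)=T_{r}(G/K)\cap D(G/K)$,
\[
D_{r}(G/K) = \phi\big(G\times_{K}{\mathcal S}_{\mathfrak m}(r)\big)\cap \phi\big(G\times_{K}{\mathfrak m}^{R}\big) = \phi\Big((G\times_{K}{\mathcal S}_{\mathfrak m}(r))\cap(G\times_{K}{\mathfrak m}^{R})\Big).
\]
Next I would observe that for an ${\rm Ad}(K)$-invariant subset $A\subseteq {\mathfrak m}$ one has $[(a,x)]\in G\times_{K}A$ if and only if $x\in A$ (if $[(a,x)]=[(a',x')]$ with $x'\in A$, then $x={\rm Ad}_{k^{-1}}x'\in A$ for the relevant $k\in K$). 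Applying this to $A = {\mathcal S}_{\mathfrak m}(r)$, which is invariant since $\langle\cdot,\cdot\rangle$ is ${\rm Ad}(K)$-invariant, and to $A = {\mathfrak m}^{R}={\rm Ad}(K)(W)$, yields $(G\times_{K}{\mathcal S}_{\mathfrak m}(r))\cap(G\times_{K}{\mathfrak m}^{R}) = G\times_{K}({\mathcal S}_{\mathfrak m}(r)\cap{\mathfrak m}^{R}) = G\times_{K}{\mathcal S}^{R}_{\mathfrak m}(r)$, hence $D_{r}(G/K) = \phi(G\times_{K}{\mathcal S}^{R}_{\mathfrak m}(r))$.

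It then remains to identify $G\times_{K}{\mathcal S}^{R}_{\mathfrak m}(r)$ with $\chi(G/H\times {\mathcal S}_{W}(r))$, after which applying $\phi$ and recalling that $D(G/K)=(\phi\circ\chi)(G/H\times W)$ concludes. The inclusion $\chi(G/H\times {\mathcal S}_{W}(r))\subseteq G\times_{K}{\mathcal S}^{R}_{\mathfrak m}(r)$ is immediate from ${\mathcal S}_{W}(r)\subseteq {\mathcal S}^{R}_{\mathfrak m}(r)$, since $\chi(gH,w)=[(g,w)]$. For the reverse inclusion I would take $[(a,x)]$ with $x\in {\mathcal S}^{R}_{\mathfrak m}(r)$, write $x={\rm Ad}_{k}w$ with $k\in K$ and $w\in W$ using ${\mathfrak m}^{R}={\rm Ad}(K)(W)$, note that $\langle w,w\rangle = \langle x,x\rangle = r^{2}$ so that $w\in {\mathcal S}_{\mathfrak a}(r)\cap W = {\mathcal S}_{W}(r)$, and finally use the defining right $K$-action to rewrite $[(a,x)] = [(a,{\rm Ad}_{k}w)] = [(ak,w)] = \chi(akH,w)$.

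The only steps requiring any care are the compatibility of the fibered product $G\times_{K}(\,\cdot\,)$ with intersections — which rests on the ${\rm Ad}(K)$-invariance of \emph{both} ${\mathcal S}_{\mathfrak m}(r)$ and ${\mathfrak m}^{R}$ — and the fact that every regular vector of norm $r$ is ${\rm Ad}(K)$-conjugate to a vector of ${\mathcal S}_{W}(r)$, which uses that ${\rm Ad}(K)$ acts by $\langle\cdot,\cdot\rangle$-isometries; there is no genuine obstacle beyond keeping the identifications straight.
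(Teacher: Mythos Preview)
Your proof is correct and follows essentially the same route as the paper: both reduce to establishing $\chi(G/H\times {\mathcal S}_{W}(r)) = G\times_{K}{\mathcal S}^{R}_{\mathfrak m}(r)$ via the two inclusions, using ${\rm Ad}(K)$-invariance of $\langle\cdot,\cdot\rangle$ for the nontrivial one. You are slightly more explicit than the paper in justifying the preliminary identity $D_{r}(G/K)=\phi(G\times_{K}{\mathcal S}^{R}_{\mathfrak m}(r))$ via the compatibility of $G\times_{K}(\,\cdot\,)$ with intersections of ${\rm Ad}(K)$-invariant sets, but this is the same argument spelled out in more detail.
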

\begin{proof} From (\ref{TrSm}), $D_{r}(G/K) = \phi(G\times_{K}{\mathcal S}_{\mathfrak m}^{R}(r)).$ Let us then see that the equality
\begin{equation}\label{SS}
\chi(G/H\times {\mathcal S}_{W}(r)) = G\times_{K}{\mathcal S}^{R}_{\mathfrak m}(r)
\end{equation}   
holds. Clearly, if $(aH,w)\in G/H\times {\mathcal S}_{W}(r)$ then $\chi(aH,w) = [(a,w)]\in G\times_{K}{\mathcal S}^{R}_{\mathfrak m}(r).$ Moreover, if $[(a,x)]\in G\times_{K}{\mathcal S}^{R}_{\mathfrak m}(r)$ then there exists $k\in K$ and $w\in W$ such that $x = {\rm Ad}_{k}w.$ Because the inner product $\langle\cdot,\cdot\rangle$ is ${\rm Ad}(K)$-invariant, $w$ belongs to ${\mathcal S}_{W}(r)$ and then $(akH,w)\in G/H\times {\mathcal S}_{W}(r).$ But, $\chi(akH,w) = [(ak,w)] = [(a,{\rm Ad}_{k}w)] = [(a,x)].$ This proves equality (\ref{SS}).
\end{proof}

  \section{The standard vector field}
  
  Next, studying separately the cases of rank one and rank greater than or equal to two, we determine necessary and sufficient conditions for the existence of $G$-invariant metrics on $G/H\times {\mathcal S}_{W}(r)$ for which $\xi^{S}$ is a Killing vector field. 
  
  Let us first obtain the standard almost Hermitian structure $(J^{S},g^{S})$ on $G/H\times W$ by its identification with $D(G/K)$ via $\phi\circ \chi,$ i.e. $J^{S} = (\phi\circ \chi)^{-1}_{*}J^{S}(\phi\circ \chi)_{*}$ and $g^{S} = (\phi\circ \chi)^{*}g^{S},$ and $\xi^{S} =\chi^{-1}_{*}\xi^{S}$ on $G/H\times {\mathcal S}_{W}(r),$ for each $r>0.$  
  
 \begin{proposition}\label{estandard} The standard almost Hermitian structure $(J^{S},g^{S})$ on $G/H\times W$ is $G$-invariant and it is determined at $(o_{H},w),$ $w\in W,$ by 
$$
\begin{array}{l}
J^{S}_{(o_{H},w)}(X_{j},0)  =  (0, \frac{\partial}{\partial x_{j}}), \qquad J^{S}_{(o_{H},w)}(0,\frac{\partial}{\partial x_{j}}) = (-X_{j}, 0),\\[0.4pc]
J^{S}_{(o_{H},w)}(\xi^{s}_{\lambda},0) =  (-\frac{1}{\lambda_{\mathbb R}(w)}\zeta_{\lambda}^{s},0), \quad  J^{S}_{(o_{H},w)}(\zeta^{s}_{\lambda},0)  =  (\lambda_{\mathbb R}(w)\xi^{s}_{\lambda},0);\\[0.4pc]
g^{S}_{(o_{H},w)}((X_{j},0),(X_{j},0))  =  g^{S}_{(o_{H},w)}((0,\frac{\partial}{\partial x_{j}}),(0,\frac{\partial}{\partial x_{j}}) ) =  1,\\[0.4pc]
g^{S}_{(o_{H},w)}((\xi^{s}_{\lambda},0),(\xi^{s}_{\lambda},0))  =  \frac{1}{\lambda^{2}_{\mathbb R}(w)}g^{S}_{(o_{H},w)}((\zeta^{s}_{\lambda},0),(\zeta^{s}_{\lambda},0))  =  1,
\end{array}
$$
where $j = 1,\dots ,{\bf r}$ and $s = 1,\dots, m_{\lambda},$ $\lambda\in \Sigma^{+},$ the rest of components of $g^{S}$ being zero. Moreover, the standard vector field $\xi^{S}$ on $G/H\times {\mathcal S}_{W}(r),$ for each $r>0,$ is determined by 
\begin{equation}\label{xixi}
\xi^{S}_{(o_{H},w)} = (\frac{1}{r}w,0),\quad w\in {\mathcal S}_{W}(r).
\end{equation}
\end{proposition}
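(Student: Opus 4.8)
The plan is to compute everything by transporting the formulas of Lemma \ref{lGK} through the diffeomorphism $\phi\circ\chi$, using the explicit description of $\chi_{*(o_H,w)}$ already recorded in (\ref{f++}). Concretely, for $J^S$ and $g^S$ I first observe that $G$-invariance is immediate: the distributions $\widetilde{\mathcal H},\widetilde{\mathcal V}$ are $G$-invariant and $J^S, g^S$ are built from the $G$-invariant objects $\nabla$ and $g$, so pulling back by the $G$-equivariant $\phi\circ\chi$ yields $G$-invariant tensors, and it suffices to evaluate them at the single point $(o_H,w)$.

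The core computation is to plug each of the four basis vectors $(X_j,0)$, $(0,\partial/\partial x_j)$, $(\xi^s_\lambda,0)$, $(\zeta^s_\lambda,0)$ into (\ref{f++}) to get their $\pi_{*(e,w)}$-expressions, and then apply (\ref{JG}) with $x=w$. The key simplification is that $w\in{\mathfrak a}={\mathfrak m}_0$, so for $G/K$ symmetric the connection function restricted to the relevant directions is very degenerate: since $[{\mathfrak m},{\mathfrak m}]\subset{\mathfrak k}$ we have $[\mu,x]_{\mathfrak m}=0$ for $\mu,x\in{\mathfrak m}$, and the symmetric term ${\mathfrak U}$ vanishes as well (a symmetric space is naturally reductive, indeed ${\mathfrak U}=0$ since $\langle[\mu_3,\mu_1]_{\mathfrak m},\mu_2\rangle=0$), so $\alpha(\mu,x)=0$ for all $\mu,x\in{\mathfrak m}$. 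Therefore in (\ref{JG}) every $\alpha(\cdot,w)$ term drops out, and the formulas collapse to $J^S\pi_{*(e,w)}(\mu,u_w)=\pi_{*(e,w)}(-u,\mu_w)$ and $g^S(\pi_{*(e,w)}(\mu,u_w),\pi_{*(e,w)}(\nu,v_w))=\langle\mu,\nu\rangle+\langle u,v\rangle$. From here: $(X_j,0)\mapsto\pi_{*(e,w)}(X_j,0)$ whose image under $J^S$ is $\pi_{*(e,w)}(0,(X_j)_w)$, which by (\ref{f++}) is $\chi_{*(o_H,w)}(0,\partial/\partial x_j)$, giving the first line; reversing gives $J^S(0,\partial/\partial x_j)=(-X_j,0)$. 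For the root directions, $(\xi^s_\lambda,0)\mapsto\pi_{*(e,w)}(\xi^s_\lambda,0)$ with $J^S$-image $\pi_{*(e,w)}(0,(\xi^s_\lambda)_w)$; but by the last formula in (\ref{f++}), $\pi_{*(e,w)}(0,(\xi^s_\lambda)_w)=-\frac1{\lambda_{\mathbb R}(w)}\chi_{*(o_H,w)}(\zeta^s_\lambda,0)$, i.e. $J^S(\xi^s_\lambda,0)=(-\frac1{\lambda_{\mathbb R}(w)}\zeta^s_\lambda,0)$, and applying $J^S$ again (or directly from (\ref{f++}) and the collapsed $J^S$ formula) gives $J^S(\zeta^s_\lambda,0)=(\lambda_{\mathbb R}(w)\xi^s_\lambda,0)$. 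The metric entries follow the same way: $g^S((X_j,0),(X_j,0))=\langle X_j,X_j\rangle=1$, $g^S((0,\partial/\partial x_j),(0,\partial/\partial x_j))=\langle (X_j)_w,(X_j)_w\rangle=1$, $g^S((\xi^s_\lambda,0),(\xi^s_\lambda,0))=\langle\xi^s_\lambda,\xi^s_\lambda\rangle=1$, while $g^S((\zeta^s_\lambda,0),(\zeta^s_\lambda,0))=\langle\lambda_{\mathbb R}(w)\xi^s_\lambda,\lambda_{\mathbb R}(w)\xi^s_\lambda\rangle=\lambda^2_{\mathbb R}(w)$, consistent with the stated normalization; orthogonality of the mixed and cross terms is inherited from the orthogonality of $\{X_j\}\cup\{\xi^s_\lambda\}$ in ${\mathfrak m}$ together with the block-diagonal structure of (\ref{JG}).

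For the vector field $\xi^S$, I use (\ref{Nxi}): at $[(e,w)]$ with $w\in{\mathcal S}_W(r)\subset{\mathcal S}_{\mathfrak m}(r)$ one has $\xi^S_{[(e,w)]}=\frac1r\pi_{*(e,w)}(w,-\alpha(w,w)_w)$, and again $\alpha(w,w)=0$ because $w\in{\mathfrak m}$ and ${\mathfrak U}=0$, $[{\mathfrak m},{\mathfrak m}]_{\mathfrak m}=0$. So $\xi^S_{[(e,w)]}=\frac1r\pi_{*(e,w)}(w,0)$. Writing $w=\sum_j w_j X_j$ and using the first identity of (\ref{f++}), $\pi_{*(e,w)}(w,0)=\sum_j w_j\,\chi_{*(o_H,w)}(X_j,0)=\chi_{*(o_H,w)}(w,0)$; transporting by $\chi^{-1}_*$ gives $\xi^S_{(o_H,w)}=(\frac1r w,0)$, which is (\ref{xixi}).

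The only real subtlety — and the step I would treat most carefully — is keeping the two identifications straight: $T_{(o_H,w)}(G/H\times W)$ is identified with $\overline{\mathfrak m}\times T_wW$ via $(\pi_H\times{\rm id})_{*(e,w)}$, so a pair like $(\zeta^s_\lambda,0)$ lives in the $G/H$ factor through $\overline{\mathfrak m}={\mathfrak m}\oplus\sum_\lambda{\mathfrak k}_\lambda$, whereas $(0,\partial/\partial x_j)$ lives in the $W$ factor; the bridge between them is exactly (\ref{f++}), which shows that $\chi$ sends a $\zeta^s_\lambda$-direction in $G/H$ to a vertical (Euclidean) direction $-\lambda_{\mathbb R}(w)(\xi^s_\lambda)_w$ in ${\mathfrak m}$, and this is what produces the $\lambda_{\mathbb R}(w)$ factors in $J^S$ and $g^S$. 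Once that dictionary is in hand, every line of the statement is a one-step substitution into the collapsed forms of (\ref{JG}) and (\ref{Nxi}); no genuine difficulty remains, only bookkeeping.
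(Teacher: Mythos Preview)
Your proposal is correct and follows exactly the paper's approach: observe that $\alpha\equiv 0$ because $G/K$ is symmetric, then read off the formulas from Lemma~\ref{lGK} (equations (\ref{JG}) and (\ref{Nxi})) by transporting through $\chi$ via (\ref{f++}). The paper's proof is simply the two-line summary of what you have written out in full.
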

\begin{proof} Because $G/K$ is a Riemannian symmetric space, the bilinear function $\alpha$ in (\ref{nabla}) is the zero function. Then, from Lemma \ref{lGK} and taking into account (\ref{lGm}), the result follows directly from (\ref{f++}).
\end{proof}
  
 \subsection{The Case ${\rm rank}\;G/K = 1$} Then $\dim {\mathfrak a} =1$ and the restricted root system $\Sigma$ of ${\mathfrak m}$, with respect to ${\mathfrak a},$ is either $\Sigma = \{\pm\varepsilon\},$ for the Euclidean sphere ${\mathbb S}^{n}$ and the real projective space ${\mathbb R}{\mathbf P}^{n},$ or $\Sigma = \{\pm\varepsilon,\pm\frac{1}{2}\varepsilon\},$ for the complex and quaternionic projective spaces ${\mathbb C}{\mathbf P}^{n}$ and ${\mathbb H}{\mathbf P}^{n}$ and for the Cayley plane ${\mathbb C}a{\mathbf P}^{2},$ where $\varepsilon$ is some pure imaginary linear function $\varepsilon = {\rm i}\varepsilon_{\mathbb R}$ on ${\mathfrak a}^{\mathbb C}$ (see \cite[Lemma 3.2]{JC}). 
 
 Denote by $X$ the unique (basis) vector $X\in {\mathfrak a}$ such that $\varepsilon_{\mathbb R}(X) = 1.$ Multiplying the inner product $\langle\cdot,\cdot\rangle$ by a positive constant, we can assume that $\langle X,X\rangle = 1.$ Then the Weyl chamber $W$ in ${\mathfrak a}$ is given by 
\[   
 W = \{tX\colon t\in {\mathbb R}^{+}\}
 \]
 which is naturally identified with ${\mathbb R}^{+}.$ The open dense subset of regular points ${\mathfrak m}^{R}\subset {\mathfrak m}$ is in this case ${\mathfrak m}\setminus \{0\}$ and $D(G/K)= \phi(G\times_{K}{\mathfrak m}^{R})$ is the punctured tangent bundle $T(G/K)\setminus\{\mbox{\rm zero section}\}.$ Hence, $D(G/K) = (\phi\circ \chi)(G/H\times {\mathbb R}^{+})$ and  $D_{r}(G/K) = T_{r}(G/K).$ Because ${\mathcal S}_{W}(r)= \{r\},$ $T_{r}(G/K)$ can be identified with the quotient space $G/H$ and, from (\ref{xixi}), $\xi^{S}$ on $G/H$ is the $G$-invariant vector field such that $\xi^{S}_{o_{H}} = X.$

The quotient expressions $G/K$ as symmetric spaces and the isotropy subgroups $H$ of $G$ for $T_{r}(G/K)$ appear in Table I.

    \bigskip
\begin{tabular}{llllll}
\multicolumn{6}{c}{Table I. Compact rank-one symmetric spaces}\\
\hline\noalign{\smallskip}
 & $G/K$
& {$\dim$}
& $m_\varepsilon$
& $m_{\varepsilon/{\scriptscriptstyle 2}}$ & $H$ \\
\noalign{\smallskip}\hline\noalign{\smallskip}
$\mathbb{S}^n,$\,$(n\geq 2)$ & $\mathrm{SO}(n +1)/\mathrm{SO}(n)$ & $n$ & $n-1$ & $0$ & ${\rm SO}(n-1)$\\ \smallskip
${\mathbb R}{\mathbf P}^n,$ \;$(n\geq 2)$ & $\mathrm{SO}(n{+}1)/{\rm S}({\rm O}(1)\times {\rm O}(n))$ & $n$ & $n{-}1 $ & $0$ & ${\rm S}({\rm O}(1)\times{\rm O}(n-1))$ \\ \smallskip
${\mathbb C}{\mathbf P}^n$\, $(n\geq 2)$ & $\mathrm{SU}(n{+}1)/\mathrm{S}(\mathrm{U}(1){\times}
\mathrm{U}(n))$ & $2n$
& $1$ & $2n-2$ & ${\rm S}({\rm U}(1)\times{\rm U}(n-1))$ \\ \smallskip
${\mathbb H}{\mathbf P}^n$ \,$(n\geq 1)$ & ${\mathrm{Sp}(n{+}1)/\mathrm{Sp}(1){\times} \mathrm{Sp}(n)}$ & $4n$
& $3$ & $4n{-}4$ & ${\rm Sp}(n-1)$ \\
\smallskip
${\mathbb C\mathrm a}{\mathbf P}^2$ & ${\mathrm{F_4}/\mathrm{Spin(9)}}$ & $16$ & $7$
& 8 & $\mathrm{Spin}(7)$ \\
\hline
\end{tabular}

\vspace{0.5cm}

 In order to simplify notations, we put ${\mathfrak m}_{\varepsilon/2} = 0$ and ${\mathfrak k}_{\varepsilon/2} = 0$ if $\varepsilon/2$ is not in $\Sigma.$ Then, taking into account that the decomposition $\overline{\mathfrak m} = {\mathfrak a}\oplus{\mathfrak m}_{\varepsilon}\oplus{\mathfrak m}_{\varepsilon/2}\oplus{\mathfrak k}_{\varepsilon}\oplus{\mathfrak k}_{\varepsilon/2}$ is ${\rm Ad}(H)$-irreducible \cite{JC}, any $G$-invariant Riemannian metric $\tilde{\mathbf g}$ on $G/H$ is determined at the origin $o_{H}$ by 
\begin{equation}\label{Bb}
\tilde{\mathbf g}_{o_{H}} = a_{0}^{2}\langle\cdot,\cdot\rangle_{\mathfrak a} + a_{\varepsilon}\langle\cdot,\cdot\rangle_{{\mathfrak m}_{\varepsilon}} +  a_{\varepsilon/2}\langle\cdot,\cdot\rangle_{{\mathfrak m}_{\varepsilon/2}}    +  b_{\varepsilon}\langle\cdot,\cdot\rangle_{{\mathfrak k}_{\varepsilon}} + b_{\varepsilon/2}\langle\cdot,\cdot\rangle_{{\mathfrak k}_{\varepsilon/2}},
\end{equation}
where $a_{0},a_{\lambda}$ and $b_{\lambda},$ for $\lambda\in \Sigma^{+},$ are positive constants. Therefore, it follows, using Proposition \ref{estandard}, that the induced metric $\tilde{g}^{S}$ of the Sasaki metric $g^{S}$ on $T_{r}(G/K) = G/H\times\{r\} = G/H$ is given by
\begin{equation}\label{inducedgs}
\begin{array}{lcl}
\tilde{g}^{S}_{o_{H}}= \left\{\begin{array}{lcl} \langle\cdot,\cdot\rangle_{\mathfrak m} + r^{2}\langle\cdot,\cdot\rangle_{{\mathfrak k}_{\varepsilon}} &\mbox{if} & G/K = {\mathbb S}^{n}\;\mbox{or}\;\; {\mathbb R}{\mathbf P}^{n},\\[0.4pc]
 \langle\cdot,\cdot\rangle_{\mathfrak m} + r^{2}\langle\cdot,\cdot\rangle_{{\mathfrak k}_{\varepsilon}} + \frac{r^{2}}{4}\langle\cdot,\cdot\rangle_{{\mathfrak k}_{\varepsilon/2}} & \mbox{if} & G/K =  {\mathbb C}{\mathbf P}^{n},\; {\mathbb H}{\mathbf P}^{n}\;\mbox{or}\; \;{\mathbb C}a{\mathbf P}^{n}.
 \end{array}
 \right.
 \end{array}
 \end{equation}
  \subsection{The case ${\rm rank}\;G/K = 2$} \label{dos} Denote the two rays in the Cartan subspace ${\mathfrak a},$ $\dim{\mathfrak a} = 2,$ delineating the Weyl chamber $W$ by $R_{1}$ and $R_{2}$ and by $\theta_{\rm max}$ the angle between $R_{1}$ and $R_{2}.$  Then $\theta_{\rm max}$ is equal to $\pi/3,$ $\pi/4,$ $\pi/4$ or $\pi/6,$ according to whether $\Sigma$ is of type $A_{2},$ $B_{2},$ $BC_{2}$ or $G_{2},$ respectively, (see, for example, \cite{Klein1}) and ${\mathcal S}_{W}(r)$ can be expressed as
 \[
 {\mathcal S}_{W}(r) = \{w(\theta) = r(\cos\theta X_{1} + \sin\theta X_{2})\colon 0 < \theta < \theta_{\rm max}\}.
 \]
 Then $\overline{{\mathcal S}_{W}(r)}\setminus {\mathcal S}_{W}(r) = \{\overline{w}_{1} =w(0),\overline{w}_{2} = w(\theta_{\rm max})\}$ and the open dense subset $D_{r}(G/K)$ of $T_{r}(G/K),$ for each $r>0,$ can be identified with the product manifold $G/H\times ]0,\theta_{\rm max}[.$  From (\ref{xixi}), $\xi^{S}$ is determined on $G/H\times ]0,\theta_{\rm max}[$ by $\xi^{S}_{(o_{H},\theta)} = (\cos\theta X_{1} + \sin\theta X_{2},0)$ and
 \[
 \{\xi^{S}_{(o_{H},\theta)},(\cos\theta X_{2}-\sin\theta X_{1},0),(0,\frac{d}{d\theta}), (\xi^{s}_{\lambda},0),(\zeta^{s}_{\lambda},0)\},
 \]
 for all $s = 1,\dots, m_{\lambda},$ $\lambda\in \Sigma^{+},$ is a basis of $\overline{\mathfrak m}\times T_{\theta}]0,\theta_{\rm max}[\cong T_{(o_{H},w(\theta))}(G/H\times {\mathcal S}_{W}(r)).$ 
 
 \subsection{An example: $SU(3)/SO(3)$}\label{examples} With the purpose of illustrating what was commented in Section \ref{tres} and in the previous Subsection \ref{dos}, we analyze the 5-dimensional compact rank-two symmetric space $G/K = SU(3)/SO(3),$ whose restricted root system $\Sigma$ if of the type $A_{2}.$ (See \cite{CNV} and \cite{Klein2} for more details about this space).
  
  Let ${\mathfrak g} = {\mathfrak s}{\mathfrak u}(3)$ and ${\mathfrak k} = {\mathfrak s}{\mathfrak o}(3)$ be the Lie algebras of $G$ and $K,$ i.e. the spaces of traceless skew-Hermitian complex and skew-symmetric real $3\times 3$ matrices, respectively. Then $G/K$ is a symmetric space of Helgason's type ${\rm AI},$ with associated involution $\sigma$ defined by $\sigma(X) = \overline{X},$ for all $X\in G.$ Let $E_{jk}$ denote the square matrix of ${\mathfrak g}{\mathfrak l}(3)$ with entry $1$ where the $i$th row and the $j$th column meet, all other entries being $0,$ and set $A_{jk} = i(E_{jj}-E_{kk}),$ $B_{jk} = E_{jk}-E_{kj}$ and $C_{jk} = i(E_{jk} + E_{kj}).$ Then,
 \[
 {\mathfrak m} = {\mathbb R}\{A_{12},A_{23}\}\oplus {\mathbb R}\{C_{12},C_{13},C_{23}\},\quad {\mathfrak k} = {\mathbb R}\{B_{12},B_{13},B_{23}\}.
 \]
 It is clear that the space ${\mathfrak a} = \{{\rm diag}(it_{1},it_{2},it_{3}),\;t_{j}\in {\mathbb R},\;\sum_{j=1}^{3}t_{j} = 0\} = {\mathbb R}\{A_{12},A_{23}\}$ is a Cartan subspace of the space ${\mathfrak m}\subset{\mathfrak g}.$ Since ${\mathfrak a}$ is indeed a Cartan subalgebra of ${\mathfrak g},$ the root system $\Delta$ of ${\mathfrak g}$ coincides with the restricted root system $\Sigma$ and the centralizer ${\mathfrak h}$ is the zero algebra. Moreover, $H = \exp({\mathfrak a})\cap K = \{{\rm diag}(\varepsilon_{1},\varepsilon_{2},\varepsilon_{3})\colon {\varepsilon}_{j} = \pm 1,\;\Pi_{j=1}^{3}\varepsilon_{j} = 1\}.$ A system of (restricted) roots of ${\mathfrak g}^{\mathbb C}$ with respect to ${\mathfrak a}^{\mathbb C}$ is $\Sigma = \{\lambda_{1},\lambda_{2}, \lambda_{3} = \lambda_{1} + \lambda_{2}\},$ where
 \begin{equation}\label{lambdas}
 \lambda_{1}(A_{12}) = 2i,\quad \lambda_{1}(A_{23}) = -i,\quad \lambda_{2}(A_{12}) = -i,\quad \lambda_{2}(A_{23}) = 2i.
 \end{equation}
 Then, $\lambda_{3}(A_{12}) = \lambda_{3}(A_{23}) = i.$ Because the Killing form of ${\mathfrak g}$ is proportional to the trace form, we put $\langle X,Y\rangle = -\frac{1}{2}{\rm trace}XY,$ for all $X,Y\in {\mathfrak g}.$ Then $\{C_{12},C_{13},C_{23}\}$ and $\{B_{12},B_{13},B_{23}\}$ are orthonormal bases of $\sum_{j=1}^{3}{\mathfrak m}_{\lambda_{j}}\subset {\mathfrak m}$ and ${\mathfrak k},$ respectively. Moreover, $\langle A_{12},A_{12}\rangle = \langle A_{23},A_{23}\rangle = 1$ and  $\langle A_{12},A_{23}\rangle = -\frac{1}{2}.$ Hence, $\{X_{1},X_{2}\}$ is an orthonormal basis of ${\mathfrak a},$ where
 \[
 X_{1}=  \frac{\sqrt{3}}{3}(A_{12}-A_{23}),\quad X_{2} = A_{12} + A_{23},
 \]
and the restriction of $\lambda_{\mathbb R}$ to ${\mathcal S}_{W}(r),$ for each $\lambda\in \Sigma^{+},$ as function on $]0,\theta_{\rm max}[,$ is given by
\[
(\lambda_{1})_{\mathbb R}(\theta) = -r(\sqrt{3}\cos\theta + \sin\theta),\quad (\lambda_{2})_{\mathbb R}(\theta) = r(\sqrt{3}\cos\theta -\sin\theta),\quad (\lambda_{3})_{\mathbb R}(\theta) = -2r \sin\theta.
\]
Using (\ref{lambdas}), we have $W= \{ aX_{1}+bX_{2} : 0<b<\sqrt{3}a\}.$ Hence, it follows that $\theta_{\rm max} = \pi/3$ and the rays $R_{1}$ and $R_{2}$ are generated by $X_{1}$ and $X_{1} + \sqrt{3}X_{2},$ respectively. The multiplicity of $\lambda_{1},$ $\lambda_{2}$ and $\lambda_{3}$ is $1$ and we get
 $$
 \begin{array}{lcllcl}
 {\mathfrak m}_{\lambda_{1}} = {\mathbb R}\{C_{12}\},& & {\mathfrak m}_{\lambda_{2}} = {\mathbb R}\{C_{23}\},& &{\mathfrak m}_{\lambda_{3}} = {\mathbb R}\{C_{13}\},\\[0.4pc]
 {\mathfrak k}_{\lambda_{1}} = {\mathbb R}\{B_{12}\}, & & {\mathfrak k}_{\lambda_{2}} = {\mathbb R}\{B_{23}\}, & & {\mathfrak k}_{\lambda_{3}} = {\mathbb R}\{B_{13}\}.
 \end{array}
 $$

 \subsection{The Case ${\rm rank}\;G/K = {\bf r}> 2$} For each $w= \sum_{j=1}^{\bf{r}}w_{j}X_{j}\in {\mathcal S}_{W}(r),$ $r>0,$ there exists $j_{0}\in\{1,\dots ,{\mathbf r}\}$ such that $w_{j_{0}}\neq 0.$ Let $\alpha\in \{0,1\}$ such that $(-1)^{\alpha}w_{j_{0}}>0$ and consider on the open neighborhood 
 \begin{equation}\label{Ualpha}
 {\mathcal U}^{\alpha}_{j_{0}} = \{u\in {\mathcal S}_{W}(r)\colon (-1)^{\alpha}\langle u,X_{j_{0}}\rangle >0\}
 \end{equation}
 of $w$ in ${\mathcal S}_{W}(r),$ the basis of vector fields 
  \[
  \{P_{j} = x_{j}\frac{\partial}{\partial x_{j_{0}}} - x_{j_{0}}\frac{\partial}{\partial x_{j}}\;\colon  j = 1,\dots ,{\mathbf r},\, j\neq j_{0}\}.
  \]
Put $Y_{j} = x_{j}X_{j_{0}} - x_{j_{0}}X_{j}.$ Then,
\begin{equation}\label{basisxi}
\{\xi^{S}_{(o_{H},w)}, (Y_{j}(w),0),(0,P_{j}(w)), (\xi^{s}_{\lambda},0),(\zeta^{s}_{\lambda},0)\},
\end{equation}
for all $j = 1,\dots, {\mathbf r},$ $j\neq j_{0},$ and $s = 1,\dots ,m_{\lambda},$ $\lambda\in \Sigma^{+},$ is a basis of $\overline{\mathfrak m}\times T_{w}{\mathcal S}_{W}(r)\cong T_{(o_{H},w)}(G/H\times {\mathcal S}_{W}(r)).$ 

\begin{remark}{\rm Taking $j_{0} = 2$ and $j = 1,$ and considering the identification $]0,\theta_{\rm max}[ \cong {\mathcal S}_{W}(r),$ the basis (\ref{basisxi}) can be also chosen when ${\rm rank}\;G/K = 2,$ where $P_{1}$ and $Y_{1}$ are the vector fields on ${\mathcal S}_{W}(r)$ given by $P_{1}(w(\theta)) = \frac{d}{d\theta}$ and $Y_{1}(w(\theta)) = r(\cos\theta X_{2}-\sin \theta X_{1}).$
}
\end{remark}
\begin{lemma}\label{brackets} If ${\rm rank}\;G/K = {\mathbf r}\geq 2,$ then, for each $w\in {\mathcal S}_{W}(r)$ such that $w_{j_{0}}\neq 0,$ $j_{0}\in \{1,\dots,{\bf r}\},$ we have
$$
\begin{array}{l}
[\xi^{S},(Y_{j},0)]_{(o_{H},w)}  =  0, \quad [\xi^{S},(0,P_{j})]_{(o_{H},w)}  = (-\frac{1}{r}Y_{j}(w),0),\quad \mbox{$j\in\{1,\dots ,r\},\;j\neq j_{0},$} \\[0.4pc]
[\xi^{S},(\xi^{s}_{\lambda},0)]_{(o_{H},w)}  =  (-\frac{\lambda_{\mathbb R}(w)}{r}\zeta^{s}_{\lambda},0), \quad  [\xi,(\zeta^{s}_{\lambda},0)]_{(o_{H},w)} =  (\frac{\lambda_{\mathbb R}(w)}{r}\xi^{s}_{\lambda},0).
\end{array}
$$
\end{lemma}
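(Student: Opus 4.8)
The plan is to evaluate each of the four brackets at the point $(o_H,w)$ by reducing it to Lie‑algebra data, using two structural observations. First, by Proposition~\ref{estandard} and the $G$‑invariance of $\xi^S$ (which holds because $J^S$ and $g^S$ are $G$‑invariant and $\phi\circ\chi$ is $G$‑equivariant), the vector field $\xi^S$ on $G/H\times{\mathcal S}_W(r)$ has vanishing ${\mathcal S}_W(r)$‑component everywhere, and along the $G/H$‑slice through any $u\in{\mathcal S}_W(r)$ it coincides with the $\tau$‑vector field $(\tfrac1r u)^\tau$ on $G/H$ attached, via~(\ref{tautau}), to the reductive decomposition ${\mathfrak g}={\mathfrak h}\oplus\overline{\mathfrak m}$; equivalently, its $G/H$‑direction component restricted to $\{o_H\}\times{\mathcal S}_W(r)$ is the linear map $u\mapsto\tfrac1r u$ with values in $\overline{\mathfrak m}\cong T_{o_H}(G/H)$. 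Second, since $\overline{\mathfrak m}={\mathfrak m}\oplus\sum_{\lambda\in\Sigma^+}{\mathfrak k}_\lambda$ contains each of ${\mathfrak a}$, ${\mathfrak m}_\lambda$ and ${\mathfrak k}_\lambda$, the $\overline{\mathfrak m}$‑projection is transparent on all the brackets that will arise, and we may freely use the standard rule $[\mu_1^\tau,\mu_2^\tau]_{o_H}=[\mu_1,\mu_2]_{\overline{\mathfrak m}}$ recalled in Section~2.

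For a vector field of the form $(\mu^\tau,0)$, where either $\mu\in\overline{\mathfrak m}$ is constant or $\mu=\mu(u)$ depends only on the ${\mathcal S}_W(r)$‑coordinate, both $\xi^S$ and $(\mu^\tau,0)$ are tangent to the $G/H$‑factor; since moreover the ${\mathcal S}_W(r)$‑component of $\xi^S$ vanishes, a short computation in product coordinates shows that $[\xi^S,(\mu^\tau,0)]$ is again tangent to the $G/H$‑factor and that, at $(o_H,w)$, it equals the $G/H$‑bracket of the frozen vector fields $(\tfrac1r w)^\tau$ and $(\mu(w))^\tau$, namely $[\tfrac1r w,\mu(w)]_{\overline{\mathfrak m}}$. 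Taking $\mu=Y_j(w)\in{\mathfrak a}$ gives $0$ because ${\mathfrak a}$ is abelian; taking $\mu=\xi^s_\lambda$ and $\mu=\zeta^s_\lambda$, the relations~(\ref{eq.ms4.3}) yield $[\tfrac1r w,\xi^s_\lambda]=-\tfrac{\lambda_{\mathbb R}(w)}{r}\zeta^s_\lambda$ and $[\tfrac1r w,\zeta^s_\lambda]=\tfrac{\lambda_{\mathbb R}(w)}{r}\xi^s_\lambda$, which lie in ${\mathfrak k}_\lambda$ and ${\mathfrak m}_\lambda$ respectively. This produces the value $[\xi^S,(Y_j,0)]_{(o_H,w)}=0$ and the two entries of the last row of the statement.

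It remains to compute $[\xi^S,(0,P_j)]$. Here $(0,P_j)$ is tangent to the ${\mathcal S}_W(r)$‑factor while $\xi^S$ is tangent to the $G/H$‑factor; using that the ${\mathcal S}_W(r)$‑component of $\xi^S$ vanishes and that its $G/H$‑direction component along the slice is the map $u\mapsto\tfrac1r u$, the product‑coordinate computation gives that $[\xi^S,(0,P_j)]$ is tangent to the $G/H$‑factor with value at $(o_H,w)$ equal to minus the directional derivative of $u\mapsto\tfrac1r u$ along $P_j(w)$, i.e.\ $-\tfrac1r P_j(w)$. Finally, under the identification $T_wW\cong{\mathfrak a}$ one has $P_j(w)=w_jX_{j_0}-w_{j_0}X_j=Y_j(w)$, whence $[\xi^S,(0,P_j)]_{(o_H,w)}=(-\tfrac1r Y_j(w),0)$, completing the list.

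The only genuinely delicate point is the bookkeeping in the two product‑coordinate reductions: one must make sure that $(\xi^s_\lambda,0)$, $(\zeta^s_\lambda,0)$, $(Y_j,0)$ and $\xi^S$ are extended off the slice $\{o_H\}\times{\mathcal S}_W(r)$ exactly as prescribed by the identification $D_r(G/K)\cong G/H\times{\mathcal S}_W(r)$, and that $\xi^S$ carries no ${\mathcal S}_W(r)$‑component. Once this is fixed, each bracket collapses either to the reductive bracket rule of Section~2 — hence to~(\ref{eq.ms4.3}) and to the commutativity of ${\mathfrak a}$ — or to an elementary directional derivative of the position‑type coefficient of $\xi^S$. (Alternatively, one could transport everything to $G\times_K{\mathfrak m}^R$ and argue with~(\ref{kk}) and~(\ref{f++}), but the direct computation on $G/H\times{\mathcal S}_W(r)$ is shorter.)
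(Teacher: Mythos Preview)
Your proof is correct and follows essentially the same route as the paper's: write $\xi^{S}=\frac{1}{r}\sum_{k}x_{k}(X_{k}^{\tau_{H}},0)$ in product coordinates, use that the $G/H$-direction fields annihilate the coordinate functions $x_{k}$, reduce the first three brackets to the $\overline{\mathfrak m}$-bracket rule (invoking the commutativity of ${\mathfrak a}$ and relations~(\ref{eq.ms4.3})), and compute $[\xi^{S},(0,P_{j})]$ by differentiating the coefficients $x_{k}$ along $P_{j}$. Your write-up is somewhat more conceptual---you isolate the two structural facts (vanishing ${\mathcal S}_{W}(r)$-component of $\xi^{S}$, $G$-invariance giving the $\tau$-field description along slices) before applying them---whereas the paper simply carries out the coordinate calculation directly; in particular, you make explicit the use of the abelianness of ${\mathfrak a}$ for $[\xi^{S},(Y_{j},0)]=0$, which the paper leaves implicit behind the one-line remark ``$(X_{j},0)(x_{k})=0$''.
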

\begin{proof} Because $(X_{j},0)(x_{k}) = 0,$ it follows that $[\xi^{S},(Y_{j},0)]_{(o_{H},w)} = 0.$ On the other hand,  we obtain
$$
\begin{array}{lcl}
[\xi^{S},(0,P_{j})]_{(o_{H},w)} & = & \frac{1}{r}\sum_{k=1}^{\bf r}[x_{k}(X_{k},0),(0,x_{j}\frac{\partial}{\partial x_{j_{0}}}-x_{j_{0}}\frac{\partial}{\partial x_{j}})]_{(o_{H},w)}\\[0.4pc]
& = & \frac{1}{r}\sum_{k=1}^{\bf r}(-w_{j}\frac{\partial x_{k}}{\partial x_{j_{0}}}(w) + w_{j_{0}}\frac{\partial x_{k}}{\partial x_{j}}(w))(X_{k},0)\\[0.4pc]
& = & \frac{1}{r}(-w_{j}(X_{j_{0}},0) + w_{j_{0}}(X_{j},0)) = -\frac{1}{r}(Y_{j}(w),0).
\end{array}
$$
Finally, using (\ref{eq.ms4.3}) and proceeding in the same way, the last two equalities are obtained. 
\end{proof}
Now we have all the necessary material to prove the main result of this section.

\begin{theorem}\label{tmain} A symmetric space $G/K$ of compact type has rank one if and only if there exists a $G$-invariant Riemannian metric $\tilde{\bf g}$ on the tangent sphere bundle $T_{r}(G/K),$ for some $r>0,$ for which the standard vector field $\xi^{S}$ is Killing. Each of these metrics $\tilde{\bf g}$ on $T_{r}(G/K) = G/H,$ where ${\rm rank}\;G/K = 1,$ is determined by
$$
\begin{array}{lcl}
\tilde{\bf g}_{o_{H}}= \left\{\begin{array}{lcl}
a_{0}^{2}\langle\cdot,\cdot\rangle_{\mathfrak a} + a_{\varepsilon}\langle\cdot,\cdot\rangle_{{\mathfrak m}_{\varepsilon}\oplus {\mathfrak k}_{\varepsilon}} &\mbox{if} & G/K = {\mathbb S}^{n}\;\mbox{or}\;\; {\mathbb R}{\mathbf P}^{n},\\[0.4pc]

a_{0}^{2}\langle\cdot,\cdot\rangle_{\mathfrak a} + a_{\varepsilon}\langle\cdot,\cdot\rangle_{{\mathfrak m}_{\varepsilon}\oplus {\mathfrak k}_{\varepsilon}} + a_{\varepsilon/2} \langle\cdot,\cdot\rangle_{{\mathfrak m}_{\varepsilon/2}\oplus {\mathfrak k}_{\varepsilon/2}}  & \mbox{if} & G/K =  {\mathbb C}{\mathbf P}^{n},\; {\mathbb H}{\mathbf P}^{n}\;\mbox{or}\; \;{\mathbb C}a{\mathbf P}^{n},
 \end{array}
 \right.
 \end{array}
 $$
for some positive constants $a_{0},a_{\varepsilon}$ and $a_{\varepsilon/2}.$
\end{theorem}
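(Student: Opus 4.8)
The plan is to treat the two implications separately, both times reducing to a computation on the dense open set $D_r(G/K)\cong G/H\times{\mathcal S}_W(r)$ of $T_r(G/K)$ (Proposition \ref{ll1}); since $L_{\xi^S}\tilde{\mathbf g}$ is continuous, $\xi^S$ is Killing on $T_r(G/K)$ precisely when it is Killing on $D_r(G/K)$. The one structural remark used throughout is that, by (\ref{xixi}), $\xi^S$ is everywhere tangent to the first factor $G/H$ of $G/H\times{\mathcal S}_W(r)$; hence $\xi^S$ annihilates every function that is constant along that factor, and in particular it annihilates $\tilde{\mathbf g}(V,W)$ for any $G$-invariant vector fields $V,W$.

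First I would settle the case ${\mathbf r}:={\rm rank}\,G/K\ge 2$ by contradiction. Fix $w\in{\mathcal S}_W(r)$ and an index $j_0$ with $w_{j_0}\neq 0$, choose any $j\neq j_0$, and work on $G/H\times{\mathcal U}^{\alpha}_{j_0}$. The vector $Y_j(w)=x_jX_{j_0}-x_{j_0}X_j\in{\mathfrak a}$ is ${\rm Ad}(H)$-fixed and nonzero (since $w_{j_0}\neq 0$ and $X_j,X_{j_0}$ are independent), so $(Y_j,0)$ is a nowhere-vanishing $G$-invariant vector field, while $(0,P_j)$ is $G$-invariant for trivial reasons. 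Writing out $(L_{\xi^S}\tilde{\mathbf g})\big((0,P_j),(Y_j,0)\big)=0$, inserting the brackets $[\xi^S,(0,P_j)]=-\frac1r(Y_j,0)$ and $[\xi^S,(Y_j,0)]=0$ of Lemma \ref{brackets}, and dropping the $\xi^S$-derivative term by the remark above, one is left with
\[
\frac1r\,\tilde{\mathbf g}\big((Y_j,0),(Y_j,0)\big)=0,
\]
which is impossible for a Riemannian metric. (The case ${\mathbf r}=2$ is identical, with $j_0=2$, $j=1$.) Thus no $G$-invariant metric on $T_r(G/K)$ has $\xi^S$ Killing once the rank is at least two.

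For the converse, assume ${\rm rank}\,G/K=1$, so $T_r(G/K)=G/H$ and $\xi^S$ is the $G$-invariant field with $\xi^S_{o_H}=X$, where $\varepsilon_{\mathbb R}(X)=1$. Since ${\rm Ad}_hX=X$ for all $h\in H$, the assignment $\psi_t\colon gH\mapsto g(\exp tX)H$ is a well-defined flow generated by $\xi^S$ that commutes with every translation $\tau_g$. Consequently $\xi^S$ is Killing for a $G$-invariant metric $\tilde{\mathbf g}$ iff each $\psi_t$ is an isometry, and — because $\psi_t$ commutes with the transitive isometric action of $G$ and $\tilde{\mathbf g}$ is $G$-invariant — iff the linear map $(\rho_t)_{*o_H}$, where $\rho_t:=\tau_{\exp(-tX)}\circ\psi_t$ fixes $o_H$, is $\tilde{\mathbf g}_{o_H}$-orthogonal for every $t$. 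A short computation gives $\rho_t(\exp Y\,H)=\exp\!\big({\rm Ad}_{\exp(-tX)}Y\big)H$, and since (\ref{eq.ms4.3}) shows that ${\rm ad}_X$ preserves $\overline{\mathfrak m}$, this yields $(\rho_t)_{*o_H}=e^{-t\,{\rm ad}_X}|_{\overline{\mathfrak m}}$. Hence the condition becomes: ${\rm ad}_X|_{\overline{\mathfrak m}}$ is skew-symmetric with respect to $\tilde{\mathbf g}_{o_H}$.

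It remains to run this through the general $G$-invariant metric (\ref{Bb}), $\tilde{\mathbf g}_{o_H}=a_0^2\langle\cdot,\cdot\rangle_{\mathfrak a}+a_\varepsilon\langle\cdot,\cdot\rangle_{{\mathfrak m}_\varepsilon}+a_{\varepsilon/2}\langle\cdot,\cdot\rangle_{{\mathfrak m}_{\varepsilon/2}}+b_\varepsilon\langle\cdot,\cdot\rangle_{{\mathfrak k}_\varepsilon}+b_{\varepsilon/2}\langle\cdot,\cdot\rangle_{{\mathfrak k}_{\varepsilon/2}}$. By (\ref{eq.ms4.3}), ${\rm ad}_X$ vanishes on ${\mathfrak a}$ and acts on each plane $\mathbb R\xi_\lambda^s\oplus\mathbb R\zeta_\lambda^s\subset{\mathfrak m}_\lambda\oplus{\mathfrak k}_\lambda$, $\lambda\in\{\varepsilon,\varepsilon/2\}$, as $\lambda_{\mathbb R}(X)$ times the standard rotation generator; since the three summands ${\mathfrak a}$, ${\mathfrak m}_\varepsilon\oplus{\mathfrak k}_\varepsilon$, ${\mathfrak m}_{\varepsilon/2}\oplus{\mathfrak k}_{\varepsilon/2}$ are mutually orthogonal for (\ref{Bb}) and ${\rm ad}_X$-invariant, the skew-symmetry condition reduces to testing it on the pairs $(\xi_\lambda^s,\zeta_\lambda^s)$, which forces exactly $a_\varepsilon=b_\varepsilon$ and $a_{\varepsilon/2}=b_{\varepsilon/2}$; conversely these equalities clearly make ${\rm ad}_X$ skew. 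This identifies the admissible metrics with the family $a_0^2\langle\cdot,\cdot\rangle_{\mathfrak a}+a_\varepsilon\langle\cdot,\cdot\rangle_{{\mathfrak m}_\varepsilon\oplus{\mathfrak k}_\varepsilon}+a_{\varepsilon/2}\langle\cdot,\cdot\rangle_{{\mathfrak m}_{\varepsilon/2}\oplus{\mathfrak k}_{\varepsilon/2}}$ of the statement (with ${\mathfrak m}_{\varepsilon/2}={\mathfrak k}_{\varepsilon/2}=0$, and thus no $a_{\varepsilon/2}$-term, for ${\mathbb S}^n$ and ${\mathbb R}{\mathbf P}^n$), which completes the argument. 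I expect the one genuinely delicate point to be the reduction in the previous paragraph — that it suffices to check the isometry property of $\psi_t$ at the single point $o_H$, which is exactly what the commutation of $\psi_t$ with the transitive $G$-action buys; everything else is bookkeeping with (\ref{eq.ms4.3}) and Lemma \ref{brackets}.
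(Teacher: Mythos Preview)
Your argument is correct. The rank $\geq 2$ half is essentially the paper's proof: both evaluate $({\mathcal L}_{\xi^{S}}\tilde{\mathbf g})_{(o_{H},w)}$ on the pair $\big((Y_{j},0),(0,P_{j})\big)$ via Lemma~\ref{brackets} and obtain the obstruction $\tfrac{1}{r}\tilde{\mathbf g}\big((Y_{j},0),(Y_{j},0)\big)$ (the paper records the slightly more general identity (\ref{L2}) for pairs $j,k$, but uses it the same way).

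For rank one your route differs from the paper's. The paper computes $({\mathcal L}_{\xi^{S}}\tilde{\mathbf g})_{o_{H}}(\xi^{s}_{\lambda},\zeta^{s}_{\lambda})$ directly, using the local $\tau_{H}$-extensions and (\ref{eq.ms4.3}), obtaining $\lambda_{\mathbb R}(X)(b_{\lambda}-a_{\lambda})$, and then asserts that the remaining components of the Lie derivative vanish. You instead recognise $\xi^{S}$ as the generator of the right-translation flow $\psi_{t}\colon gH\mapsto g\exp(tX)H$, use commutation of $\psi_{t}$ with the transitive $G$-action to reduce the Killing condition to $\tilde{\mathbf g}_{o_{H}}$-orthogonality of $e^{-t\,{\rm ad}_{X}}|_{\overline{\mathfrak m}}$, i.e.\ to skew-symmetry of ${\rm ad}_{X}|_{\overline{\mathfrak m}}$, and then read off $a_{\lambda}=b_{\lambda}$ from (\ref{eq.ms4.3}). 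The two approaches meet at the same linear-algebra check; yours is more structural and makes it automatic that only the $(\xi^{s}_{\lambda},\zeta^{s}_{\lambda})$ pairs contribute (since ${\rm ad}_{X}$ is block-diagonal on $\overline{\mathfrak m}={\mathfrak a}\oplus\bigoplus_{\lambda}({\mathfrak m}_{\lambda}\oplus{\mathfrak k}_{\lambda})$), whereas the paper dispatches those other components with a one-line claim. The paper's approach, in return, needs no detour through the flow and stays at the infinitesimal level throughout.
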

\begin{proof} If ${\rm rank}\;G/K\geq 2,$ we prove that there is no $G$-invariant Riemannian metric on $G/H\times {\mathcal S}_{W}(r),$ for each $r>0,$ for which $\xi^{S}$ is Killing. Let $\tilde{\mathbf g}$ be any $G$-invariant Riemannian metric on $G/H\times{\mathcal S}_{W}(r).$ Since the vector fields $(Y_{j},0)$ and $(0,P_{j})$ on each open subset ${\mathcal U}^{\alpha}_{j_{0}}$ of ${\mathcal S}_{W}(r),$ defined in (\ref{Ualpha}), are $G$-invariant, it follows that $\tilde{\mathbf g}((Y_{j},0),(0,P_{k})) (aH,w) = f_{jk}(w),$ for all $w\in {\mathcal U}^{\alpha}_{j_{0}},$ where $f_{jk}$ is a smooth function defined on ${\mathcal U}^{\alpha}_{j_{0}}.$ Then, using  Lemma \ref{brackets}, we get
 \begin{equation}\label{L2}
 ({\mathcal L}_{\xi^{S}}\tilde{\mathbf g})_{(o_{H},w)}((Y_{j},0),(0,P_{k} ))  =   \frac{1}{r}\tilde{\bf g}_{(o_{H},w)}((Y_{j},0),(Y_{k},0)).
\end{equation}
Hence $\xi^{S}$ cannot be Killing. If ${\rm rank}\;G/K = 1,$ from (\ref{Bb}) and using (\ref{eq.ms4.3}), we have
$$
\begin{array}{lcl}
({\mathcal L}_{\xi^{S}}\tilde{\mathbf g})_{o_{H}}(\xi^{s}_{\lambda},\zeta^{s}_{\lambda}) & =  & -\tilde{\mathbf g}_{o_{H}}([\xi^{S},(\xi^{s}_{\lambda})^{\tau_{H}}],\zeta^{s}_{\lambda}) - \tilde{\mathbf g}_{o_{H}}(\xi^{s}_{\lambda},[\xi^{S},(\zeta^{s}_{\lambda})^{\tau_{H}}])\\[0.4pc]
& = & -\tilde{\mathbf g}_{o_{H}}([X,\xi^{s}_{\lambda}]_{\overline{\mathfrak m}},\zeta^{s}_{\lambda}) - \tilde{\mathbf g}_{o_{H}}(\xi^{s}_{\lambda},[X,\zeta^{s}_{\lambda}]_{\overline{\mathfrak m}})\\[0.4pc]
&  =  & -b_{\lambda}\langle[X,\xi^{s}_{\lambda}]_{\overline{\mathfrak m}},\zeta^{s}_{\lambda}\rangle - a_{\lambda}\langle \xi^{s}_{\lambda},[X,\zeta^{s}_{\lambda}]_{\overline{\mathfrak m}}\rangle = \lambda_{\mathbb R}(X)(b_{\lambda}-a_{\lambda}),
\end{array}
$$
for all $s = 1,\dots, m_{\lambda},$ $\lambda\in\Sigma^{+}.$ Because the other components of $({\mathcal L}_{\xi^{S}}\tilde{\mathbf g})_{o_{H}}$ are zero, $\xi^{S}$ is Killing if and only if $a_{\lambda} = b_{\lambda},$ for all $\lambda\in \Sigma^{+}.$ This proves the result.
\end{proof}

As a consequence from Theorem \ref{tmain}, using (\ref{inducedgs}), we have the following.

\begin{corollary}\label{lrank1} If $G/K$ is a symmetric space of compact type, the standard vector field $\xi^{S}$ on $(T_{r}(G/K),\tilde{g}^{S})$ is Killing if and only if $r = 1$ and $G/K = {\mathbb S}^{n}$ or ${\mathbb R}{\mathbf P}^{n}.$
\end{corollary}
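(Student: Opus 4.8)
The plan is to combine Theorem~\ref{tmain} with the explicit formula~(\ref{inducedgs}) for the induced Sasaki metric $\tilde{g}^{S}$ on $T_{r}(G/K)$. By Theorem~\ref{tmain}, if $\xi^{S}$ is Killing with respect to some $G$-invariant metric then ${\rm rank}\,G/K = 1$, so we may immediately restrict to the rank-one case and work on $T_{r}(G/K) = G/H$; conversely, we must exhibit for which spaces and radii the \emph{specific} metric $\tilde{g}^{S}$ lies in the family of metrics described in Theorem~\ref{tmain} for which $\xi^{S}$ is Killing.

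First I would recall from Theorem~\ref{tmain} that, in the rank-one case, a $G$-invariant metric $\tilde{\bf g}$ given by~(\ref{Bb}) makes $\xi^{S}$ Killing precisely when $a_{\lambda} = b_{\lambda}$ for each $\lambda\in\Sigma^{+}$, i.e. the $\langle\cdot,\cdot\rangle$-weights on ${\mathfrak m}_{\lambda}$ and ${\mathfrak k}_{\lambda}$ must coincide. Then I would read off from~(\ref{inducedgs}) the weights of $\tilde{g}^{S}$: on ${\mathfrak m}_{\varepsilon}$ and ${\mathfrak m}_{\varepsilon/2}$ the weight is $1$, while on ${\mathfrak k}_{\varepsilon}$ it is $r^{2}$ and on ${\mathfrak k}_{\varepsilon/2}$ it is $r^{2}/4$. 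For the spheres and real projective spaces there is no $\varepsilon/2$ component, so the single condition is $r^{2} = 1$, i.e. $r = 1$. For ${\mathbb C}{\mathbf P}^{n}$, ${\mathbb H}{\mathbf P}^{n}$ and ${\mathbb C}a{\mathbf P}^{2}$ we would need both $r^{2} = 1$ and $r^{2}/4 = 1$ simultaneously, which is impossible; hence these spaces are excluded for every $r>0$.

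Putting these two computations together yields the claim: among all compact symmetric spaces $G/K$, the standard vector field $\xi^{S}$ on $(T_{r}(G/K),\tilde{g}^{S})$ is Killing if and only if $G/K$ has rank one (by Theorem~\ref{tmain}), has no $\varepsilon/2$ restricted root (so is ${\mathbb S}^{n}$ or ${\mathbb R}{\mathbf P}^{n}$ by Table~I), and $r = 1$. I do not expect any genuine obstacle here: the argument is essentially bookkeeping, the only point requiring a little care being to check that in~(\ref{inducedgs}) I am correctly matching the ${\mathfrak m}$-part (which always has weight $1$ since $\tilde{g}^{S}$ restricts the Sasaki metric $g^{S}$ whose horizontal part is $\langle\cdot,\cdot\rangle_{\mathfrak m}$) against the ${\mathfrak k}_{\lambda}$-parts, and that the absence of any cross terms in $\tilde{g}^{S}$ means the weights $a_{0}, a_{\lambda}, b_{\lambda}$ in~(\ref{Bb}) are exactly the diagonal entries displayed in~(\ref{inducedgs}).
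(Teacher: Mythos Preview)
Your proposal is correct and follows exactly the approach indicated in the paper, which simply states that the corollary is a consequence of Theorem~\ref{tmain} together with~(\ref{inducedgs}). You have faithfully filled in the bookkeeping: reading off $a_{\varepsilon}=a_{\varepsilon/2}=1$ and $b_{\varepsilon}=r^{2}$, $b_{\varepsilon/2}=r^{2}/4$ from~(\ref{inducedgs}) and matching them against the condition $a_{\lambda}=b_{\lambda}$ from the proof of Theorem~\ref{tmain}.
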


\section{A family of almost Hermitian structures}

Using that the subspaces ${\mathfrak m}_{\lambda}$ and ${\mathfrak k}_{\lambda},$ for each $\lambda\in \Sigma^{+},$ are ${\rm Ad}(H)$-invariant, we construct a family of $G$-invariant almost Hermitian structures $(J^{q},{\bf g} = {\bf g}^{q,a_{0},a_{\lambda}})$ on $G/H\times W,$ depending of smooth functions $q\colon {\mathbb R}^{+}\to {\mathbb R}^{+}$ and $a_{0}, a_{\lambda} \colon W\to {\mathbb R}^{+},$ for each $\lambda\in \Sigma^{+},$ as follows: 

The $G$-invariant almost complex structure $J^{q}$ at $(o_{H},w)$ coincides with $J^{S}_{(o_{H},w)}$ on ${\mathfrak a}\times T_{w}W,$ for each $w\in W,$ that is
\begin{equation}\label{Jq1}
J^{q}_{(o_{H},w)}(u,v_{w}) = (-v,u_{w}),\quad \mbox{\rm for all $u,v\in {\mathfrak a},$}
\end{equation}
and, on $\sum_{\lambda\in \Sigma^{+}}({\mathfrak m}_{\lambda}\oplus{\mathfrak k}_{\lambda})\times \{0\},$ it is expressed as
\begin{equation}\label{Jq2}
\begin{array}{l}
J^{q}_{(o_{H},w)}(\xi^{s}_{\lambda},0) =  (-\frac{1}{q_{\lambda}(w)}\zeta_{\lambda}^{s},0), \quad  J_{(o_{H},w)}(\zeta^{s}_{\lambda},0)  =  (q_{\lambda}(w)\xi^{s}_{\lambda},0),
\end{array}
\end{equation}
for all $s =1,\dots ,m_{\lambda},$ $\lambda\in \Sigma^{+},$ where $q_{\lambda} = q\circ \lambda_{\mathbb R}.$ The $G$-invariant Hermitian metric ${\bf g}= {\bf g}^{q,a_{0},a_{\lambda}}$ is determined by 
\begin{equation}\label{gnew}
{\bf g}_{(o_{H},w)} = a_{0}^{2}(w)(\pi_{1}^{*}\langle\cdot,\cdot\rangle_{\mathfrak a} + \pi_{2}^{*}\langle\cdot,\cdot\rangle_{W}) + \sum_{\lambda\in \Sigma^{+}}a_{\lambda}(w)\pi_{1}^{*}(\langle\cdot,\cdot\rangle_{{\mathfrak m}_{\lambda}} + (q_{\lambda}(w))^{2}\langle\cdot,\cdot\rangle_{{\mathfrak k}_{\lambda}})
\end{equation}
where $\pi_{1}\colon G/H\times W\to G/H$ and $\pi_{2}\colon G/H\times W\to W$ are the projections of $G/H\times W$ and $\langle\cdot,\cdot\rangle_{W}$ is the natural Riemannian metric on $W:$ $\langle(v_{1})_{w},(v_{2})_{w}\rangle_{W} = \langle v_{1},v_{2}\rangle,$ for all $v_{1},v_{2}\in {\mathfrak a}.$

 From Proposition \ref{estandard}, the standard almost Hermitian structure $(J^{S},g^{S})$ on $G/H\times W$ is the structure $(J^{q},{\bf g}^{q,a_{0},a_{\lambda}}),$ where $q = {\rm Id}_{\mathbb R}$ and $a_{0} = a_{\lambda} = 1,$ for all $\lambda\in \Sigma^{+}.$ Moreover, for $q(t) = \tanh t,$ the $G$-invariant almost complex structure $J^{q}$ coincides with the {\em canonical complex structure} $J^{c}$ on $G/H\times W^{+}$ \cite{GGM1}, i.e., at the points $(o_{H},w)\in G/H\times W^{+},$ $J^{c}$ is determined by 
 \[
\begin{array}{l}
J^{c}_{(o_{H},w)}(X_j,0)
= \Big(0,\frac{\partial}{\partial x_j}(w)\Big),\quad J^{c}_{(o_{H},w)}\Big(0,\frac{\partial}{\partial x^{j}}(w)\Big) = -(X_{j},0),
\; j=1,\dotsc ,{\bf r},\\[0.5pc]
J^{c}_{(o_{H},w)} (\xi_\lambda^s,\,  0)
=(-\coth\lambda_{\mathbb R}(w)\zeta_\lambda^s, \, 0), \quad J^{c}_{(o_{H},w)}(\zeta^{s}_{\lambda},\,0) = (\tanh\lambda_{\mathbb R}(w)\xi^{s}_{\lambda},\,0),
\end{array}
\]
where $s=1,\dotsc ,m_\lambda$ and $\lambda\in\Sigma^+.$ 

\begin{remark}{\rm Every compact, normal Riemannian homogeneous manifold $G/K,$ in particular any symmetric space of compact type, admits a (unique) {\em adapted} complex structure on its entire tangent bundle $T(G/K) = G\times_{K}{\mathfrak m}$  called the {\em canonical} complex structure \cite{Szoke1}. Under the $G$-equivariant diffeomorphism $G\times_{K}{\mathfrak m}\to G^{\mathbb C}/K^{\mathbb C},$ $[(a,x)]\mapsto a\exp(ix)K^{\mathbb C}$ (see \cite{Mo1}), where $G^{\mathbb C}$ and $K^{\mathbb C}$ are the complexifications of the corresponding Lie groups, Sz\"oke in \cite{Szoke1} obtains that $J ^{c}$ coincides precisely with the natural complex structure of $G^{\mathbb C}/K^{\mathbb C}.$}
\end{remark}
 
 By the $G$-equivariant diffeomorphism $\phi\circ \chi,$ the structure $(J^{q},{\bf g}^{q,a_{0},a_{\lambda}})$ is defined on the open dense subset $D(G/K)$ of $T(G/K).$ Next we look for necessary and sufficient conditions so that it can be extended to the entire tangent bundle $T(G/K).$

\begin{theorem}\label{TJc} The almost Hermitian structure $(J^{q}, {\bf g}^{q,a_{0},a_{\lambda}})$ on $G\times_{K}{\mathfrak m}^{R} = \chi(G/H\times W)$ can be extended to a $G$-invariant structure on the entire $T(G/K)\cong G\times_{K}{\mathfrak m}$ if and only if $\lim_{t\to 0^{+}}\frac{q(t)}{t}\in {\mathbb R}^{+}$ and, for each limit point  $\overline{w} = \lim_{m\to \infty}w_{m} \in \overline{W}\setminus W\subset {\mathfrak a}$ of some sequence $w_{m}\in W,$ $m\in {\mathbb N},$ there exist with (finite) positive values the limits $\lim_{m\to \infty}a_{0}(w_{m})$ and $\lim_{m\to \infty}a_{\lambda}(w_{m}),$ for all $\lambda\in \Sigma^{+}.$
 \end{theorem}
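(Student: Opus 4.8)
The strategy is to turn the global extension question into a local one around the singular fibres and then to read off the obstructions from the boundary behaviour of $q$ and of $a_{0},a_{\lambda}$. Since $G\times_{K}{\mathfrak m}^{R}$ is open and dense in $G\times_{K}{\mathfrak m}=T(G/K)$, an extension, if it exists, is unique and, being the extension of a $G$-invariant tensor defined on a dense set, is automatically $G$-invariant. By $G$-equivariance of $\phi\circ\chi$ together with the fact that every ${\rm Ad}(K)$-orbit in ${\mathfrak m}$ meets $\overline{W}$, it suffices to decide whether $(J^{q},{\bf g}^{q,a_{0},a_{\lambda}})$ extends smoothly to a neighbourhood of $[(e,\overline{w})]$ in $G\times_{K}{\mathfrak m}$ for each $\overline{w}\in\overline{W}$; for $\overline{w}\in W$ there is nothing to prove, so all the content lies at the boundary points $\overline{w}\in\overline{W}\setminus W$, in particular at $\overline{w}=0$, which corresponds to the zero section.

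I would next set up a model of the structure over the regular set. Using the global trivialisation $(\mu,u)\mapsto\pi_{*(e,x)}(\mu,u_{x})$ of $T_{[(e,x)]}(G\times_{K}{\mathfrak m})\cong{\mathfrak m}\oplus{\mathfrak m}$, which by Lemma \ref{lGK} (recall $\alpha=0$) is the horizontal--vertical splitting, and the identifications (\ref{f++}), a computation shows that over a regular point $x={\rm Ad}_{k}w$, $w\in W$, the endomorphism $J^{q}$ and the inner product ${\bf g}^{q,a_{0},a_{\lambda}}$ are determined by: the scalar $a_{0}(w)>0$ on ${\rm Ad}_{k}{\mathfrak a}$; the scalars $a_{\lambda}(w)>0$ and $q_{\lambda}(w)/\lambda_{\mathbb R}(w)>0$ on ${\rm Ad}_{k}{\mathfrak m}_{\lambda}$ and ${\rm Ad}_{k}{\mathfrak k}_{\lambda}$; and the fact that $-\lambda_{\mathbb R}(w)^{2}$ is exactly the eigenvalue of the \emph{globally smooth} endomorphism field $x\mapsto{\rm ad}^{2}_{x}$ of the vertical bundle along ${\rm Ad}_{k}{\mathfrak m}_{\lambda}$. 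Thus, over ${\mathfrak m}^{R}$, the extendability of the whole structure is controlled by functional calculus in ${\rm ad}^{2}_{x}$ through the one-variable expressions $s\mapsto q(s)/s$ and $s\mapsto(q(s)/s)^{2}$, together with $a_{0}$ and $a_{\lambda}$ regarded as ${\rm Ad}(K)$-invariant functions descending to $W$.

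For necessity I would test the structure along a sequence $w_{m}\in W$ with $w_{m}\to\overline{w}\in\overline{W}\setminus W$, so that $\lambda_{\mathbb R}(w_{m})\to0^{+}$ for the roots $\lambda$ vanishing on $\overline{w}$ (for all $\lambda$ when $\overline{w}=0$). If $q(t)/t$ had no finite positive limit as $t\to0^{+}$, one could choose the $w_{m}$ so that the factor by which $J^{q}$ differs from the standard $J^{S}$ on the collapsing ${\mathfrak m}_{\lambda}$-block tends to $0$, to $+\infty$, or to two distinct limits along subsequences; the first contradicts invertibility of $J^{q}$ (as $(J^{q})^{2}=-{\rm Id}$), and the others contradict the continuity of an extension, whose value at $[(e,\overline{w})]$ is forced. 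The same sequences show that if some $a_{0}(w_{m})$ or $a_{\lambda}(w_{m})$ fails to tend to a finite positive value, then ${\bf g}^{q,a_{0},a_{\lambda}}$ degenerates or blows up along them, which is incompatible with extending to a Riemannian metric. This yields exactly the stated necessary conditions.

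For the converse I would assume the limit conditions and build the extension: they are precisely what makes the coefficient functions entering $J^{q}$ and ${\bf g}^{q,a_{0},a_{\lambda}}$ extend across the locus where eigenvalues of ${\rm ad}^{2}_{x}$ collide at $0$, since the relevant combinations then become functions of the globally defined field ${\rm ad}^{2}_{x}$, while $a_{0},a_{\lambda}$, being ${\rm Ad}(K)$-invariant with the prescribed boundary limits, extend continuously to $\overline{W}$; gluing these data produces a continuous $G$-invariant almost Hermitian structure on all of $T(G/K)$, and the relations $J^{2}=-{\rm Id}$ and ${\bf g}(J\cdot,J\cdot)={\bf g}$ pass to the closure by density. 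I expect the main obstacle to be upgrading continuity to genuine \emph{smoothness} at the non-central boundary points: near a wall $\lambda_{\mathbb R}=0$ one is, in effect, looking at the tangent bundle of a lower-rank symmetric space sitting inside $T(G/K)$, and one has to check, in slices adapted to the orbit-type stratification of ${\mathfrak m}$, that the functional-calculus description is compatible with this embedding, handling uniformly the faces where several walls meet.
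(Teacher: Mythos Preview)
Your outline is essentially the paper's proof. The paper makes your reduction concrete via the conjugation identity $\pi_{*(a,x)}(\mu^{\tt l}_{a},u_{x})=\pi_{*(ak,w)}(({\rm Ad}_{k^{-1}}\mu)^{\tt l}_{ak},({\rm Ad}_{k^{-1}}u)_{w})$ for $x={\rm Ad}_{k}w$ (its Lemma~\ref{l1Ad}) and then writes $J^{q}$ and ${\bf g}^{q,a_{0},a_{\lambda}}$ at an arbitrary regular point $[(a,x)]$ in closed form (formulas (\ref{ff1}) and (\ref{gg})); this is exactly your functional-calculus description in coordinates, since ${\rm Ad}_{k}$ intertwines the eigenspace decompositions of ${\rm ad}^{2}_{x}$ and ${\rm ad}^{2}_{w}$, and the coefficients visible there are precisely $q_{\lambda}(w)/\lambda_{\mathbb R}(w)$, $a_{0}(w)$, $a_{\lambda}(w)$. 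The paper then simply asserts that the extension exists if and only if these admit the stated positive limits along sequences $w_{m}\to\overline{w}\in\overline{W}\setminus W$; your explicit worry about upgrading continuity to smoothness at the walls is legitimate, and the paper is equally terse on that point.
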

For its proof we will need the following lemma.

\begin{lemma}\label{l1Ad} For each $x= {\rm Ad}_{k}w\in {\mathfrak m}^{R},$ $w\in W, k\in K,$ and $(\mu,u)\in {\mathfrak m}\times{\mathfrak m},$ we have
\[
\pi_{*(a,x)}(\mu^{\tt l}_{a},u_{x}) = \pi_{*(ak,w)}(({\rm Ad}_{k^{-1}}\mu)^{\tt l}_{ak},({\rm Ad}_{k^{-1}}u)_{w}).
\]
\end{lemma}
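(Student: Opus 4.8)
The plan is to unwind the definitions of $\chi$, $\pi$ and the right $K$-action on $G\times\mathfrak{m}$, and to reduce the claimed identity to the single relation $[(ak,w)] = [(a,\mathrm{Ad}_k w)] = [(a,x)]$ together with compatibility of push-forwards. First I would recall that $\pi\colon G\times\mathfrak{m}\to G\times_K\mathfrak{m}$ is the quotient by the right $K$-action $r_k\colon (b,y)\mapsto (bk,\mathrm{Ad}_{k^{-1}}y)$, so that $\pi\circ r_k = \pi$ for every $k\in K$; differentiating this identity at the point $(a,x)$ gives
\[
\pi_{*(a,x)} = \pi_{*(ak,\,\mathrm{Ad}_{k^{-1}}x)}\circ (r_k)_{*(a,x)}.
\]
Here $(a,x) = (a,\mathrm{Ad}_k w)$ is carried by $r_k$ to $(ak, \mathrm{Ad}_{k^{-1}}\mathrm{Ad}_k w) = (ak,w)$, so the right-hand side is exactly $\pi_{*(ak,w)}\circ (r_k)_{*(a,x)}$, and it remains only to compute $(r_k)_{*(a,x)}$ on the tangent vector $(\mu^{\tt l}_a, u_x)$.

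The second step is that differential computation, which splits into the two factors of $G\times\mathfrak{m}$. On the $\mathfrak{m}$-factor, $r_k$ acts linearly as $\mathrm{Ad}_{k^{-1}}$, so $(r_k)_{*x}(u_x) = (\mathrm{Ad}_{k^{-1}}u)_{\mathrm{Ad}_{k^{-1}}x} = (\mathrm{Ad}_{k^{-1}}u)_w$; this is the parametrization of the tangent space of $\mathfrak{m}$ viewed as a vector space, applied to the curve $t\mapsto \mathrm{Ad}_{k^{-1}}(x+tu)$. On the $G$-factor, $r_k$ restricts to right translation $R_k\colon b\mapsto bk$, and right translation carries the left-invariant vector field $\mu^{\tt l}$ to itself: indeed $\mu^{\tt l}$ is left-invariant hence $R_k$-related to $\mu^{\tt l}$ is checked by $(R_k)_*\mu^{\tt l}_b = (R_k)_*(L_b)_*\mu = (L_b)_*(R_k)_*\mu$, and we only need the value at $b=a$, namely $(R_k)_{*a}\mu^{\tt l}_a = (L_{ak})_{*e}(\mathrm{Ad}_{k^{-1}}\mu) = (\mathrm{Ad}_{k^{-1}}\mu)^{\tt l}_{ak}$, using $(R_k)_{*e}\circ(L_a)_{*e} = (L_{ak})_{*e}\circ(\mathrm{Ad}_{k^{-1}})$ on $\mathfrak{g}$. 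Combining the two factors, $(r_k)_{*(a,x)}(\mu^{\tt l}_a,u_x) = \big((\mathrm{Ad}_{k^{-1}}\mu)^{\tt l}_{ak},(\mathrm{Ad}_{k^{-1}}u)_w\big)$, and substituting into the relation from the first step yields the claim.

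I expect the only genuinely delicate point to be bookkeeping: keeping straight which $\mathrm{Ad}$ versus $\mathrm{Ad}^{-1}$ appears, and remembering that the base point of the push-forward on the right is $(ak,w)$ and not $(a,x)$ — the equality of the two classes $[(ak,w)]=[(a,x)]$ in $G\times_K\mathfrak{m}$ is what makes both sides live in the same tangent space $T_{[(a,x)]}(G\times_K\mathfrak{m})$, so this should be emphasized. Everything else is a routine application of the chain rule to $\pi\circ r_k = \pi$, so the proof is short. One should also note in passing that the right-hand expression indeed makes sense because $w\in W\subset\mathfrak{m}^R$ and $\mathrm{Ad}_{k^{-1}}\mu,\mathrm{Ad}_{k^{-1}}u\in\mathfrak{m}$ by $\mathrm{Ad}(K)$-invariance of $\mathfrak{m}$, so both sides are covered by the description (\ref{lGm}) of the tangent space.
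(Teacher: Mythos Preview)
Your argument is correct and is essentially the same as the paper's: both rest on the $K$-invariance $\pi\circ r_{k}=\pi$, the paper carrying this out via an explicit curve $t\mapsto \pi(a\exp t\mu,\mathrm{Ad}_{k}(w+t\mathrm{Ad}_{k^{-1}}u))$ and rewriting it as $\pi(ak\exp(t\,\mathrm{Ad}_{k^{-1}}\mu),w+t\mathrm{Ad}_{k^{-1}}u)$, while you factor the same computation as $\pi_{*(a,x)}=\pi_{*(ak,w)}\circ (r_{k})_{*(a,x)}$ followed by the evaluation of $(r_{k})_{*(a,x)}$ on each factor. The only cosmetic difference is that the paper's version is a one-line curve calculation, whereas you separate the chain rule step from the identification $(R_{k})_{*a}\mu^{\tt l}_{a}=(\mathrm{Ad}_{k^{-1}}\mu)^{\tt l}_{ak}$; mathematically they coincide.
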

\begin{proof} We get 
$$
\begin{array}{l} \pi_{*(a,x)}(\mu^{\tt l}_{a},u_{x})  =    \frac{d}{dt}_{\mid t = 0}\pi(a\exp t\mu, {\rm Ad}_{k}(w + t{\rm Ad}_{k^{-1}}u)) = \frac{d}{dt}_{\mid t = 0} \pi(a(\exp t\mu) k, w + t{\rm Ad}_{k^{-1}}u)\\[0.4pc]
= \frac{d}{dt}_{\mid t = 0}\pi(ak(k^{-1}(\exp t\mu) k),w + t{\rm Ad}_{k^{-1}}u) 
= \pi_{*(ak,w)}(({\rm Ad}_{k^{-1}}\mu)^{\tt l}_{ak},({\rm Ad}_{k^{-1}}u)_{w})).\
\end{array}
$$
 \end{proof}

\noindent{\bf Proof of Theorem \ref{TJc}.} From (\ref{lGm}) and applying (\ref{f++}), together with (\ref{Jq1}) and (\ref{Jq2}), the $G$-invariant almost complex structure $J^{q}= \chi_{*}J^{q}\chi^{-1}_{*}$ on $G\times_{K}{\mathfrak m}^{R}$ at the point $[(a,w)]\in \pi(G\times W),$ is determined by
$$
\begin{array}{lcl}
J^{q}\pi_{*(a,w)}((X_{j})^{\tt l}_{a},0) & = & \chi_{*(aH,w)}J^{q}((X_{j})^{\tt l}_{a},0) = \chi_{*(aH,w)}(0,\frac{\partial}{\partial x_{j}}(w))= \pi_{*(a,w)}(0, (X_{j})_{w}),\\[0.4pc]
J^{q}\pi_{*(a,w)}((\xi^{s}_{\lambda})^{\tt l}_{a},0)  &  =  & \chi_{*(aH,w)}J^{q}((\xi^{s}_{\lambda})^{\tt l}_{a},0) =  \pi_{*(a,w)}(0,\frac{\lambda_{\mathbb R}(w)}{q_{\lambda}(w)}(\xi^{s}_{\lambda})_{w}),
\end{array}
$$
for all $j = 1,\dots,r,$ $\lambda\in \Sigma^{+}$ and $s = 1,\dots, m_{\lambda}.$ Then,
\[
J^{q}\pi_{*(a,w)}(0,(X_{j})_{x}) =  \pi_{*(a,w)}(-(X_{j})^{\tt l}_{a},0),\quad
J^{q}\pi_{*(a,w)}(0,(\xi^{s}_{\lambda})_{w})  = \pi_{*(a,w)}(-\frac{q_{\lambda}(w)}{\lambda_{\mathbb R}(w)}(\xi^{s}_{\lambda})^{\tt l}_{a},0).
\]
Hence, for all $(\mu,u)\in {\mathfrak m}\times{\mathfrak m},$
$$
\begin{array}{lcl}
J^{q}\pi_{*(a,w)}(\mu^{\tt l}_{a},u_{w}) & = & \pi_{*(a,w)} \Big(-\sum_{j=1}^{\bf r}\langle u,X_{j}\rangle(X_{j})^{\tt l}_{a} - \sum_{\lambda\in \Sigma^{+}} \frac{q_{\lambda}(w)}{\lambda_{\mathbb R}(w)} \sum_{s=1}^{m_{\lambda}}\langle u,\xi^{s}_{\lambda}\rangle(\xi^{s}_{\lambda})^{\tt l}_{a},\\[0.4pc]
& & \sum_{j=1}^{\bf r}\langle \xi,X_{j}\rangle(X_{j})_{w}+ \sum_{\lambda\in \Sigma^{+}}\frac{\lambda_{\mathbb R}(w)}{q_{\lambda}(w)}\sum_{s=1}^{m_{\lambda}}\langle\xi,\xi^{s}_{\lambda}\rangle(\xi^{s}_{\lambda})_{w} \Big).
\end{array}
$$
Now, using Lemma \ref{l1Ad} and that $\langle\cdot,\cdot\rangle$ is ${\rm Ad}(K)$-invariant, $J^{q}$ at the point $[(a,x)],$ where $x = {\rm Ad}_{k}w,$ $w\in W$ and $k\in K,$ is determined by
\begin{equation}\label{ff1}
\begin{array}{lcl}
J^{q}\pi_{*(a,x)}(\mu^{\tt l}_{a},u_{x})& = & \pi_{*(ak,w)} \Big(-\sum_{j=1}^{\bf r}\langle u,{\rm Ad}_{k}X_{j}\rangle(X_{j})^{\tt l}_{ak}\\[0.4pc]
& &  \hspace{1.5 cm}- \sum_{\lambda\in \Sigma^{+}} \frac{q_{\lambda}(w)}{\lambda_{\mathbb R}(w)}\sum_{s=1}^{m_{\lambda}}\langle u,{\rm Ad}_{k}\xi^{s}_{\lambda}\rangle(\xi^{s}_{\lambda})^{\tt l}_{ak},\\[0.4pc]
& & \hspace{1.5cm}  \sum_{j=1}^{r}\langle \mu,{\rm Ad}_{k}X_{j}\rangle(X_{j})_{w}\\[0.4pc]
& &  \hspace{1.5cm} + \sum_{\lambda\in \Sigma^{+}}\frac{\lambda_{\mathbb R}(w)}{q_{\lambda}(w)}\sum_{s=1}^{m_{\lambda}}\langle\mu,{\rm Ad}_{k}\xi^{s}_{\lambda}\rangle(\xi^{s}_{\lambda})_{w})\Big).
\end{array}
\end{equation}

On the other hand, using (\ref{f++}) and (\ref{gnew}), the Riemannian metric ${\bf g}$ on $G\times_{K}{\mathfrak m}^{R},$ under the relationship ${\bf g} = \chi^{*}{\bf g},$ is determined at the point $[(a,w)]\in \pi(G\times W)$ by
$$
\begin{array}{l}
{\bf g}(\pi_{*}((X_{j})^{\tt l}_{a},0),\pi_{*}((X_{j})^{\tt l}_{a},0)) = {\bf g}(\pi_{*}(0,(X_{j})_{w}),\pi_{*}(0,(X_{j})_{w})) = a_{0}^{2}(w),\\[0.4pc]
{\bf g}(\pi_{*}((\xi^{s}_{\lambda})^{\tt l}_{a},0),\pi_{*}((\xi^{s}_{\lambda})^{\tt l}_{a},0)) = \frac{(\lambda_{\mathbb R}(w))^{2}}{(q_{\lambda}(w))^{2}}{\bf g}(\pi_{*}(0,(\xi^{s}_{\lambda})_{w}),\pi_{*}(0,(\xi^{s}_{\lambda})_{w}) = a_{\lambda}(w),
\end{array}
$$
where $j = 1,\dots, {\bf r}$ and $s = 1,\dots, m_{\lambda},$ $\lambda\in \Sigma^{+},$ the rest of components being zero. Hence, applying again Lemma \ref{l1Ad}, we get
\begin{equation}\label{gg}\begin{array}{l}
{\bf g}(\pi_{*}(\mu^{\tt l}_{a},u_{x}),\pi_{*}(\nu^{\tt l}_{a},v_{x}))  = \\[0.4pc]
\hspace{3cm} a_{0}^{2}(w)\sum_{j=1}^{\bf r}(\langle\mu,{\rm Ad}_{k}X_{j}\rangle\langle\nu,{\rm Ad}_{k}X_{j}\rangle + \langle u,{\rm Ad}_{k}X_{j}\rangle\langle v,{\rm Ad}_{k}X_{j}\rangle)\\[0.4pc]
\hspace{3cm} + \sum_{\lambda\in \Sigma^{+}}a_{\lambda}(w)\Big(\sum_{s = 1}^{m_{\lambda}}(\langle\mu,{\rm Ad}_{k}\xi^{s}_{\lambda}\rangle\langle\nu,{\rm Ad}_{k}\xi^{s}_{\lambda}\rangle \\[0.4pc]
\hspace{3cm}+ \frac{(q_{\lambda}(w))^{2}}{(\lambda_{\mathbb R}(w))^{2}}\sum_{s = 1}^{m_{\lambda}}(\langle u,{\rm Ad}_{k}\xi^{s}_{\lambda}\rangle\langle v,{\rm Ad}_{k}\xi^{s}_{\lambda}\rangle)\Big).
\end{array}
\end{equation}

Because ${\rm Ad}(K)W = {\mathfrak m}^{R}$ and ${\rm Ad}(K)(\overline{W}) = {\mathfrak m},$ consider for each point $\overline{w}\in \overline{W}\setminus W\subset {\mathfrak a},$ some sequence $w_{m}\in W,$ $m\in {\mathbb N},$ such that $\lim_{m \to \infty}w_{m} = \overline{w}.$ From (\ref{ff1}) and (\ref{gg}), the existence of the extension of $(J^{q},{\bf g})$ is determined by the existence with (finite) positive value of the limit $ \lim_{m\to \infty}\frac{q(\lambda_{\mathbb R}(w_{m}))}{\lambda_{\mathbb R}(w_{m})},$ $\lim_{m\to \infty}a_{0}(w_{m})$ and $\lim_{m\to \infty}a_{\lambda}(w_{m}),$ for each $\lambda\in \Sigma^{+}.$ Since $\lim_{m\to \infty}\lambda_{\mathbb R}(w_{m}) = \lambda_{\mathbb R}(\overline{w}) = 0,$ the result is proved.

\begin{remark}{\rm Many functions $q\colon {\mathbb R}^{+}\to {\mathbb R}^{+},$ such as $\sinh Ct,$ $\ln(1+ Ct),$ $e^{Ct}-1,$ $\tanh Ct$ and $Ct,$ for a constant $C>0,$ as linear combinations $C_{1}f_{1} + C_{2}f_{2}$ of them, for all $C_{1},C_{2}>0,$ satisfy $\lim_{t\to0^{+}}\frac{q(t)}{t} \in {\mathbb R}^{+}.$ Indeed, if $\overline{q}\colon ]-\delta,\infty[\to {\mathbb R},$ for some $\delta>0,$ is a smooth extension of a smooth function $q\colon {\mathbb R}^{+}\to {\mathbb R}^{+},$ then $\lim_{t\to 0^{+}}\frac{q(t)}{t}\in {\mathbb R}^{+}$ if and only if $\overline{q}(0) = 0$ and $\frac{d\overline{q}}{dt}(0)\in {\mathbb R}^{+}.$}
 \end{remark}

\begin{theorem}\label{tunique} In the set of all (extended) almost complex structures $\{J^{q}\}
$ of $T(G/K),$ with smooth function $q$ satisfying $\lim_{t\to 0^{+}}\frac{q(t)}{t}\in {\mathbb R}^{+},$ the canonical complex structure $J^{c} = J^{q},$ with $q(t) = \tanh t,$ is the unique structure which is integrable.
\end{theorem}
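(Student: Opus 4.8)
The plan is to compute the Nijenhuis tensor $N^{q}$ of $J^{q}$ and to show that $N^{q}\equiv0$ forces an ordinary differential equation for $q$ whose only admissible solution is $q(t)=\tanh t$. Since $J^{q}$ is integrable iff $N^{q}\equiv0$, and $N^{q}$ is a continuous $G$-invariant tensor field on $T(G/K)$ (with $J^{q}$ defined there by Theorem \ref{TJc}) while $D(G/K)\cong G/H\times W$ is dense, it suffices to evaluate $N^{q}$ at the points $(o_{H},w)$, $w\in W$. I would extend the basis of $T_{(o_{H},w)}(G/H\times W)$ to the $G$-invariant vector fields $A_{j}=((X_{j})^{\tau_{H}},0)$, $B_{j}=(0,\partial/\partial x_{j})$, $P^{s}_{\lambda}=((\xi^{s}_{\lambda})^{\tau_{H}},0)$, $Q^{s}_{\lambda}=((\zeta^{s}_{\lambda})^{\tau_{H}},0)$. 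From $(\ref{Jq1})$--$(\ref{Jq2})$ one has $J^{q}A_{j}=B_{j}$, $J^{q}P^{s}_{\lambda}=-\frac{1}{q_{\lambda}}Q^{s}_{\lambda}$ and $J^{q}Q^{s}_{\lambda}=q_{\lambda}P^{s}_{\lambda}$, where $q_{\lambda}=q\circ\lambda_{\mathbb{R}}$ is a function of $w\in W$ only; and, because $G/H\times W$ is a product and $G/K$ is symmetric, the only nonvanishing brackets among these fields at $(o_{H},w)$ are, by $(\ref{eq.ms4.3})$, $[A_{j},P^{s}_{\lambda}]=-\lambda_{\mathbb{R}}(X_{j})Q^{s}_{\lambda}$ and $[A_{j},Q^{s}_{\lambda}]=\lambda_{\mathbb{R}}(X_{j})P^{s}_{\lambda}$, together with brackets of the $P$'s and $Q$'s among themselves (valued in $\overline{\mathfrak{m}}$ and governed by the restricted-root structure constants).

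The components of $N^{q}$ that involve only the $A_{j}$ and $B_{k}$ vanish identically, and the decisive ones for pinning down $q$ are those on a pair $(A_{j},P^{s}_{\lambda})$. Here the key observation is that $A_{j}$ annihilates the function $q_{\lambda}$ (it has no $W$-component), while $B_{j}(q_{\lambda})=q'(\lambda_{\mathbb{R}}(w))\,\lambda_{\mathbb{R}}(X_{j})$. Substituting this together with the brackets above into $N^{q}(X,Y)=[J^{q}X,J^{q}Y]-[X,Y]-J^{q}[J^{q}X,Y]-J^{q}[X,J^{q}Y]$ and simplifying yields
\[
N^{q}_{(o_{H},w)}(A_{j},P^{s}_{\lambda})=\frac{\lambda_{\mathbb{R}}(X_{j})}{q_{\lambda}(w)^{2}}\Big(q'\big(\lambda_{\mathbb{R}}(w)\big)+q_{\lambda}(w)^{2}-1\Big)\,Q^{s}_{\lambda}.
\]
Choosing $j$ with $\lambda_{\mathbb{R}}(X_{j})\neq0$ (possible since $\lambda_{\mathbb{R}}\not\equiv0$ on $\mathfrak{a}$) and using that $\lambda_{\mathbb{R}}$ is positive on the open cone $W$, so $\lambda_{\mathbb{R}}(W)=\mathbb{R}^{+}$, one concludes: if $J^{q}$ is integrable then $q'(t)=1-q(t)^{2}$ for all $t\in\mathbb{R}^{+}$.

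It then remains to integrate this equation under the standing hypothesis $\lim_{t\to0^{+}}q(t)/t\in\mathbb{R}^{+}$. Its maximal solutions are $q\equiv\pm1$, $q(t)=\tanh(t+c)$ and $q(t)=\coth(t+c)$; among the smooth positive ones on $\mathbb{R}^{+}$, every one except $q(t)=\tanh t$ gives $\lim_{t\to0^{+}}q(t)/t=+\infty$ (equivalently, only $q(t)=\tanh t$ admits a smooth extension $\bar{q}$ with $\bar{q}(0)=0$ and $\bar{q}'(0)\in\mathbb{R}^{+}$, cf. the Remark after Theorem \ref{TJc}). Hence $q(t)=\tanh t$ is forced, i.e. $J^{q}=J^{c}$. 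Conversely $J^{c}$ is integrable: under the $G$-equivariant identification $G\times_{K}\mathfrak{m}\cong G^{\mathbb{C}}/K^{\mathbb{C}}$ it is the invariant complex structure of the complex manifold $G^{\mathbb{C}}/K^{\mathbb{C}}$ (see the Remark and \cite{Szoke1, Mo1}). Therefore $J^{c}$ is the unique integrable member of the family $\{J^{q}\}$.

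I expect the only real care to be needed in the bracket bookkeeping on the product $G/H\times W$ — keeping straight which terms of $N^{q}$ carry a derivative of $q$ (precisely those arising from $B_{j}(q_{\lambda})$) and which do not — and in handling the signs in the Nijenhuis expansion. The heavier computation, namely checking that \emph{every} component of $N^{q}$ (including those built from the $[\mathfrak{m}_{\lambda},\mathfrak{m}_{\mu}]$, $[\mathfrak{m}_{\lambda},\mathfrak{k}_{\mu}]$, $[\mathfrak{k}_{\lambda},\mathfrak{k}_{\mu}]$ brackets) really vanishes when $q=\tanh$, is sidestepped in the converse direction by invoking the known integrability of $J^{c}$; a self-contained verification would instead rely on the addition formula $\tanh(a+b)=\frac{\tanh a+\tanh b}{1+\tanh a\tanh b}$.
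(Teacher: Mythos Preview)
Your proposal is correct and follows essentially the same route as the paper: evaluate the Nijenhuis tensor of $J^{q}$ on the pair $((X_{j},0),(\xi^{s}_{\lambda},0))$ at $(o_{H},w)$, obtain the Riccati equation $q'=1-q^{2}$, and then use the limit condition $\lim_{t\to0^{+}}q(t)/t\in\mathbb{R}^{+}$ to single out $q(t)=\tanh t$. The only cosmetic differences are that you solve the ODE by directly listing its solutions $\tanh(t+c)$, $\coth(t+c)$, $\pm1$ (the paper instead substitutes $z=1/(q-1)$ and integrates the resulting linear equation), and that you make the converse direction explicit by invoking the identification $G\times_{K}\mathfrak{m}\cong G^{\mathbb{C}}/K^{\mathbb{C}}$, which the paper leaves implicit via its earlier Remark.
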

 \begin{proof} Suppose that $J^{q}$ is a structure of the family which is integrable, that is, its Nijenhuis torsion $[J^{q},J^{q}]$ vanishes. Because $J^{q}((X_{j})^{\tau_{H}},0) = (0, \frac{\partial}{\partial x_{j}})$ and $J^{q}((\xi^{s}_{\lambda})^{\tau_{H}},0) = (-\frac{1}{q_{\lambda}} (\zeta^{s}_{\lambda})^{\tau_{H}},0)$ on ${\mathcal U}_{o_{H}}\times W,$  we have, using (\ref{eq.ms4.3}), that 
$$
\begin{array}{lcl}
[((X_{j})^{\tau_{H}},0), ((\xi^{s}_{\lambda})^{\tau_{H}},0)]_{(o_{H},w)} &  = & (-\lambda_{\mathbb R}(X_{j})\zeta^{s}_{\lambda},0),\\[0.4pc]
J^{q}[J^{q}(X_{j})^{\tau_{H}},0),((\xi^{s}_{\lambda})^{\tau_{H}},0)]_{(o_{H},w)} &  = & (0,0_{w}),\\[0.4pc]
J^{q}[((X_{j})^{\tau_{H}},0),J^{q}((\xi^{s}_{\lambda})^{\tau_{H}},0)]_{(o_{H},w)} & = & (\frac{\lambda_{\mathbb R}(X_{j})}{(q_{\lambda}(w))^{2}}\zeta^{s}_{\lambda},0).\\[0.4pc]
\end{array}
$$
Moreover, since $\lambda_{\mathbb R}$ is a linear mapping, one gets that $\frac{\partial\lambda_{\mathbb R}}{\partial x_{j}} = \lambda_{\mathbb R}(X_{j})$ and then
$$
\begin{array}{lcl}
 [J^{q}((X_{j})^{\tau_{H}},0),J^{q}((\xi^{k}_{\lambda})^{\tau_{H}},0)]_{(o_{H},x)}  & = &( -\frac{\partial}{\partial x_{j}}(\frac{1}{q_{\lambda}})(w)\zeta^{k}_{\lambda},0)=(\frac{\lambda_{\mathbb R}(X_{j})\frac{dq}{dt}(\lambda_{\mathbb R}(w))}{(q_{\lambda}(w))^{2}}\zeta^{k}_{\lambda},0).
 \end{array}
$$
Hence, 
\[
[J^{q},J^{q}]_{(o_{H},w)}((X_{j},0),(\xi^{k}_{\lambda},0))= \frac{\lambda_{\mathbb R}(X_{j})}{(q_{\lambda}(w))^{2}}\Big( 1-  (q(\lambda_{\mathbb R}(w)))^{2} - \frac{dq}{dt}(\lambda_{\mathbb R}(w))\Big)
\]
and $[J^{q},J^{q}]_{(o_{H},w)}((X_{j},0),(\xi^{s}_{\lambda},0))=0,$ for all $w\in W,$ $j = 1,\dots ,r$ and $s = 1,\dots ,m_{\lambda},$ $\lambda\in \Sigma^{+},$ if and only if the Riccati's equation 
\[
\frac{dq}{dt} = 1-q^{2}
\]
holds. Putting $z = \frac{1}{q-1},$ we obtain the linear equation $z' = 2z +1$ whose general solution is given by $z(t) = Ce^{-2t}-\frac{1}{2}.$ Therefore, we have
\[
q(t) = \frac{C e^{2t} + \frac{1}{2}}{C e^{2t}-\frac{1}{2}},
\]
for a constant $C$ which satisfies $|C|\geq \frac{1}{2},$ using that $q(t)> 0,$ for all $t\in {\mathbb R}^{+}.$ In particular, for $C = -\frac{1}{2},$ one gets $q(t) = \tanh t$ and $\lim_{t\to 0^{+}}\frac{q(t)}{t} = 1.$ For $C = \frac{1}{2},$ one gets $q(t) = \coth t$ and so, $\lim_{t\to 0^{+}}\frac{q(t)}{t} = +\infty.$ For $C\neq \frac{1}{2}$ and $C\neq -\frac{1}{2},$ $q(t)$ can be expressed as $q(t) = \frac{C+1/2}{C-1/2} - \frac{2C}{(C-1/2)^{2}}t + O(t^{2}),$ for sufficiently small $t>0.$ Hence, $\lim_{t\to 0^{+}}\frac{q(t)}{t} = +\infty.$ Therefore, $q(t) = \tanh t.$ .
\end{proof}
 
 Finally, we determine those almost Hermitian structures $(J^{q},{\bf g}^{q,a_{0},a_{\lambda}})$ on $T(G/K)$ whose fundamental $2$-form is, up to a scalar, the symplectic form $d\theta.$ To do this, we first prove the following lemma.
  
 \begin{lemma}\label{ldtheta} The pull-back form $\theta:=\chi^{*}\theta$ of the canonical $1$-form $\theta$ on $G/H\times W$ and its exterior differential $d\theta$ are determined by
$$
\begin{array}{l}
\theta_{(o_{H},w)}(\mu,u_{w}) = \langle w,\mu\rangle,\\[0.4pc]
d\theta_{(o_{H},w)}((\mu,u_{w}),(\nu,v_{w}))  =  \frac{1}{2}(\langle u,\nu\rangle-\langle v,\mu\rangle - \langle [w, \mu],\nu\rangle),
\end{array}
$$
for all $(\mu,u_{w}),$ $(\nu,v_{w})\in \overline{\mathfrak m}\times T_{w}W,$ $w\in W.$
\end{lemma}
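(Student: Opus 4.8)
The plan is to deduce both identities from the previous lemma (formula (\ref{1canonical})) together with (\ref{f+}) and (\ref{kk}), and then to evaluate $d\theta$ by means of the intrinsic formula for the exterior derivative in the normalization $d\omega(X,Y)=\frac{1}{2}\big(X(\omega(Y))-Y(\omega(X))-\omega([X,Y])\big)$ (which is what produces the factor $\frac{1}{2}$ in the statement). First, since $\chi$ is $G$-equivariant and the canonical $1$-form on $G\times_{K}{\mathfrak m}$ is $G$-invariant, the pull-back $\theta=\chi^{*}\theta$ is $G$-invariant on $G/H\times W$, so it suffices to compute at the points $(o_{H},w)$, $w\in W$. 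There, the first equality in (\ref{f+}) together with (\ref{kk}) gives $\chi_{*(o_{H},w)}(\mu,u_{w})=\pi_{*(e,w)}\big((\mu_{\mathfrak m})^{\tt l}_{e},\,u_{w}+[\mu_{\mathfrak k},w]_{w}\big)$ for $\mu=\mu_{\mathfrak m}+\mu_{\mathfrak k}\in\overline{\mathfrak m}$ decomposed according to ${\mathfrak g}={\mathfrak k}\oplus{\mathfrak m}$; hence (\ref{1canonical}) yields $\theta_{(o_{H},w)}(\mu,u_{w})=\langle w,\mu_{\mathfrak m}\rangle$, and since $w\in{\mathfrak a}$ is orthogonal to $\sum_{\lambda\in\Sigma^{+}}{\mathfrak m}_{\lambda}$ and to ${\mathfrak k}$, this equals $\langle w,\mu\rangle$. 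That is the first formula.

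For $d\theta$ I would fix $(o_{H},w)$ and $\mu,\nu\in\overline{\mathfrak m}$, $u_{w},v_{w}\in T_{w}W$, and extend $(\mu,u_{w})$ and $(\nu,v_{w})$ to vector fields on a neighbourhood ${\mathcal U}_{o_{H}}\times W$: in the $G/H$-factor by the local fields $\mu^{\tau_{H}},\nu^{\tau_{H}}$ of (\ref{tautau}) attached to the reductive decomposition ${\mathfrak g}={\mathfrak h}\oplus\overline{\mathfrak m}$, and in the $W$-factor by the constant vector fields $\widehat{u},\widehat{v}$ on the open set $W\subset{\mathfrak a}$. Put $X=(\mu^{\tau_{H}},\widehat{u})$ and $Y=(\nu^{\tau_{H}},\widehat{v})$. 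Then: by $G$-invariance of $\theta$ and the formula just proved, $\theta(Y)(gH,w')=\langle w',\nu\rangle$ depends only on $w'$, so $X(\theta(Y))|_{(o_{H},w)}$ is the derivative of $w'\mapsto\langle w',\nu\rangle$ along $u_{w}$, namely $\langle u,\nu\rangle$, and likewise $Y(\theta(X))|_{(o_{H},w)}=\langle v,\mu\rangle$; and since $X,Y$ split as a field along $G/H$ plus one along $W$, $[X,Y]=\big([\mu^{\tau_{H}},\nu^{\tau_{H}}],[\widehat{u},\widehat{v}]\big)=\big([\mu^{\tau_{H}},\nu^{\tau_{H}}],0\big)$ with $[\mu^{\tau_{H}},\nu^{\tau_{H}}]_{o_{H}}=[\mu,\nu]_{\overline{\mathfrak m}}$ by \cite{N}, so that $\theta([X,Y])_{(o_{H},w)}=\langle w,[\mu,\nu]_{\overline{\mathfrak m}}\rangle=\langle w,[\mu,\nu]\rangle=\langle[w,\mu],\nu\rangle$, using $w\in{\mathfrak a}\perp{\mathfrak h}$ and the ${\rm ad}$-invariance of $\langle\cdot,\cdot\rangle$. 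Substituting into the intrinsic formula gives the claimed expression for $d\theta$.

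The point that needs care---and the main obstacle to a purely mechanical calculation---is that the extension fields $\mu^{\tau_{H}}$ are only locally defined and are not $G$-invariant, so a priori $\theta(Y)$ might depend on the $G/H$-coordinate; what makes the computation short is that $G$-invariance of $\theta$ forces $\theta(Y)$ to be constant along $G/H$, which trivializes the first two terms of the formula. One also has to keep in mind throughout that $\mu$ runs over $\overline{\mathfrak m}={\mathfrak m}\oplus\sum_{\lambda\in\Sigma^{+}}{\mathfrak k}_{\lambda}$, so that both the reduction to (\ref{1canonical}) and the bracket identity are performed relative to the reductive decomposition ${\mathfrak g}={\mathfrak h}\oplus\overline{\mathfrak m}$ of $G/H$, not the Cartan decomposition of $G/K$; the unwanted ${\mathfrak k}_{\lambda}$-contributions vanish at the end precisely because $w\in{\mathfrak a}$ is orthogonal to ${\mathfrak k}$.
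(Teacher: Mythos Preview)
Your proof is correct and follows essentially the same route as the paper: both derive the first identity from (\ref{1canonical}) and (\ref{f+}), then extend $(\mu,u_{w})$ to $(\mu^{\tau_{H}},\widehat{u})$ with $\widehat{u}$ constant on $W$, and compute $d\theta$ via the intrinsic formula for the exterior derivative (what the paper calls the Maurer--Cartan formula). Your version is somewhat more explicit in invoking the $G$-invariance of $\theta$ to see that $\theta(\nu^{\tau_{H}},\widehat{v})$ is constant along the $G/H$-factor, whereas the paper simply writes $\theta(\mu^{\tau_{H}},u)=\sum_{j}\langle\mu,X_{j}\rangle x_{j}$ and differentiates, but the underlying mechanism is identical.
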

\begin{proof} From (\ref{1canonical}) and (\ref{f+}),
\[
\theta_{(o_{H},w)}(\mu,u_{w}) = \theta_{[(e,w)]}\pi_{*(e,w)}(\mu_{\mathfrak m},u_{w} + [\mu_{\mathfrak k},w]_{w}) = \langle w, \mu_{\mathfrak m}\rangle = \langle w,\mu\rangle.
\]
 This proves the first equality. For the second, consider, for each $w = \sum_{j=1}^{\bf r}w_{j}X_{j}\in W^{+}$ and $u = \sum_{j=1}^{\bf r}u_{j}X_{j}\in {\mathfrak a},$ the vector $u_{w}\in T_{w}W^{+}$ given by $u_{w} = \sum_{j=1}^{\bf r}u_{j}\frac{\partial}{\partial x_{j}}(w)$ and also denote by $u$ the vector field on $W^{+}$ with constant components, $u = \sum_{j=1}^{\bf r}u_{j}\frac{\partial}{\partial x_{j}}.$ Then, $\theta(\mu^{\tau_{H}}, u) = \sum_{j=1}^{\bf r}\langle \mu,X_{j}\rangle x_{j}$ and $(\mu,u_{w})\theta(\nu^{\tau_{H}},v) = \langle u,\nu\rangle.$ Hence, the equality is obtained using the Maurer-Cartan formula.
 \end{proof}

\begin{theorem}\label{tKahler} For each smooth function $q\colon {\mathbb R}^{+}\to {\mathbb R}^{+}$ such that $\lim_{t\to 0^{+}}\frac{q(t)}{t}\in {\mathbb R}^{+},$ the almost Hermitian structure $(J^{q},{\bf g}^{q,a_{0},a_{\lambda}})$ on $G/H\times W$ can be extended to an almost K\"ahler structure on all of $T(G/K),$ where $a_{0}$ is a constant function and, for each $\lambda\in \Sigma^{+},$
\begin{equation}\label{alambda}
a_{\lambda}(w) = \frac{a_{0}^{2}\lambda_{\mathbb R}(w)}{q_{\lambda}(w)},\quad w\in W.
\end{equation}
 Moreover, it is K\"ahler if and only if $q(t) = \tanh t,$ i.e., $J^{q}$ is the canonical complex structure. 
\end{theorem}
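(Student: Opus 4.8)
The structure $(J^{q},{\bf g}^{q,a_{0},a_{\lambda}})$ is almost Hermitian by construction, so, once it is defined on all of $T(G/K),$ it will be almost K\"ahler exactly when its fundamental $2$-form $\Omega^{q}:={\bf g}^{q,a_{0},a_{\lambda}}(J^{q}\cdot,\cdot)$ is closed. The plan is to show that, precisely for the stated $a_{0}$ and $a_{\lambda},$ the form $\Omega^{q}$ is a nonzero constant multiple of the canonical symplectic form $d\theta$ of Lemma \ref{ldtheta}; since $d\theta$ is globally defined and closed, closedness of $\Omega^{q}$ follows, and the extension to $T(G/K)$ will be supplied by Theorem \ref{TJc}.

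First I would compute $\Omega^{q}$ on the dense open subset $D(G/K)\cong G/H\times W.$ At a point $(o_{H},w),$ $w\in W,$ using (\ref{Jq1}), (\ref{Jq2}) and (\ref{gnew}) on the basis $\{(X_{j},0),(0,\tfrac{\partial}{\partial x_{j}}),(\xi^{s}_{\lambda},0),(\zeta^{s}_{\lambda},0)\},$ and noting that $J^{q}$ preserves, while ${\bf g}^{q,a_{0},a_{\lambda}}$ is block-diagonal with respect to, the decomposition ${\mathfrak a}\oplus\sum_{\lambda}({\mathfrak m}_{\lambda}\oplus{\mathfrak k}_{\lambda})$ together with $T_{w}W,$ one finds that the only nonzero components are
\[
\Omega^{q}_{(o_{H},w)}\big((X_{j},0),(0,\tfrac{\partial}{\partial x_{j}})\big)=a_{0}^{2}(w),\qquad
\Omega^{q}_{(o_{H},w)}\big((\xi^{s}_{\lambda},0),(\zeta^{s}_{\lambda},0)\big)=-a_{\lambda}(w)\,q_{\lambda}(w),
\]
the last one using ${\bf g}^{q,a_{0},a_{\lambda}}((\zeta^{s}_{\lambda},0),(\zeta^{s}_{\lambda},0))=a_{\lambda}(w)\,q_{\lambda}(w)^{2}$ from (\ref{gnew}). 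On the same basis, Lemma \ref{ldtheta}, together with $[{\mathfrak a},{\mathfrak a}]=0$ and $[w,\xi^{s}_{\lambda}]=-\lambda_{\mathbb R}(w)\zeta^{s}_{\lambda}$ from (\ref{eq.ms4.3}), shows that the only nonzero components of $d\theta$ are
\[
d\theta_{(o_{H},w)}\big((X_{j},0),(0,\tfrac{\partial}{\partial x_{j}})\big)=-\tfrac12,\qquad
d\theta_{(o_{H},w)}\big((\xi^{s}_{\lambda},0),(\zeta^{s}_{\lambda},0)\big)=\tfrac12\,\lambda_{\mathbb R}(w).
\]
Comparing term by term, $\Omega^{q}=c\,d\theta$ on $D(G/K)$ for a constant $c$ if and only if $a_{0}^{2}$ is constant and $a_{\lambda}(w)q_{\lambda}(w)=a_{0}^{2}\lambda_{\mathbb R}(w)$ for every $\lambda\in\Sigma^{+}$ (then $c=-2a_{0}^{2}$), that is, exactly when $a_{0}$ is constant and (\ref{alambda}) holds; in that case $\Omega^{q}$ is closed on $D(G/K).$

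Next I would invoke Theorem \ref{TJc}. With $a_{0}$ constant and $a_{\lambda}(w)=a_{0}^{2}\lambda_{\mathbb R}(w)/q_{\lambda}(w),$ its hypotheses hold: $\lim_{t\to0^{+}}q(t)/t\in{\mathbb R}^{+}$ is assumed, $\lim_{m}a_{0}(w_{m})=a_{0}>0,$ and for a limit point $\overline{w}=\lim_{m}w_{m}\in\overline{W}\setminus W$ one has, if $\lambda_{\mathbb R}(\overline{w})>0,$ that $\lim_{m}a_{\lambda}(w_{m})=a_{0}^{2}\lambda_{\mathbb R}(\overline{w})/q(\lambda_{\mathbb R}(\overline{w}))\in{\mathbb R}^{+},$ and, if $\lambda_{\mathbb R}(\overline{w})=0,$ that $a_{\lambda}(w_{m})=a_{0}^{2}\,\lambda_{\mathbb R}(w_{m})/q(\lambda_{\mathbb R}(w_{m}))\to a_{0}^{2}/\lim_{t\to0^{+}}(q(t)/t)\in{\mathbb R}^{+}.$ Hence $(J^{q},{\bf g}^{q,a_{0},a_{\lambda}})$ extends to a $G$-invariant almost Hermitian structure on all of $T(G/K).$ Its fundamental $2$-form and $d\theta$ are then smooth $2$-forms on $T(G/K)$ agreeing on the dense subset $D(G/K),$ so $\Omega^{q}=-2a_{0}^{2}\,d\theta$ everywhere, whence $d\Omega^{q}=0$ and the extended structure is almost K\"ahler. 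For the last assertion I would use that an almost Hermitian manifold is K\"ahler if and only if it is almost K\"ahler and its almost complex structure is integrable; since our structure is almost K\"ahler, it is K\"ahler exactly when $[J^{q},J^{q}]=0,$ which by Theorem \ref{tunique} happens if and only if $q(t)=\tanh t,$ i.e. $J^{q}=J^{c}.$ Conversely, for $q(t)=\tanh t$ one has $a_{\lambda}(w)=a_{0}^{2}\lambda_{\mathbb R}(w)\coth\lambda_{\mathbb R}(w),$ a smooth positive function on $W$ with finite positive limit $a_{0}^{2}$ at every wall, so the resulting almost K\"ahler structure is genuinely K\"ahler.

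The only computational work is the evaluation of $\Omega^{q}$ and $d\theta$ in the first step, where the single subtlety is keeping track of the $q_{\lambda}$ factors coming from the ${\mathfrak k}_{\lambda}$-blocks of (\ref{gnew}) and of the signs in (\ref{eq.ms4.3}); once one notices that $\Omega^{q}$ is to be compared with $d\theta,$ the conditions on $a_{0}$ and $a_{\lambda}$ are forced. If one also wanted that these are the \emph{only} $a_{0},a_{\lambda}$ producing an almost K\"ahler extension, the harder part would be to compute $d\Omega^{q}$ directly, e.g. on the triples $\big((0,\tfrac{\partial}{\partial x_{j}}),(0,\tfrac{\partial}{\partial x_{k}}),(X_{l},0)\big)$ and $\big((0,\tfrac{\partial}{\partial x_{j}}),(\xi^{s}_{\lambda},0),(\zeta^{s}_{\lambda},0)\big),$ using $[\mu^{\tau_{H}},\nu^{\tau_{H}}]_{o_{H}}=[\mu,\nu]_{\overline{\mathfrak m}}$ and $\langle[\xi^{s}_{\lambda},\zeta^{s}_{\lambda}],X_{k}\rangle=-\lambda_{\mathbb R}(X_{k})$: this forces $a_{0}$ constant and $a_{\lambda}q_{\lambda}-a_{0}^{2}\lambda_{\mathbb R}$ constant in $w,$ and the behaviour at the walls of $W,$ where $q_{\lambda}\to q(0)=0,$ forces that constant to vanish.
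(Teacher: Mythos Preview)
Your proposal is correct and follows essentially the same route as the paper: compute the fundamental $2$-form and the components of $d\theta$ on the basis at $(o_{H},w)$, observe that the stated choice of $a_{0}$ and $a_{\lambda}$ makes them proportional (your $\Omega^{q}={\bf g}(J^{q}\cdot,\cdot)$ differs from the paper's $\omega={\bf g}(\cdot,J^{q}\cdot)$ only by a global sign), invoke Theorem \ref{TJc} for the extension, and Theorem \ref{tunique} for the K\"ahler characterization. Your explicit verification of the limit hypotheses in Theorem \ref{TJc} and the closing remark on necessity go slightly beyond what the paper spells out but are welcome additions.
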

\begin{proof} Using (\ref{eq.ms4.3}) in Lemma \ref{ldtheta}, $d\theta$ on $G/H\times W$ is determined by 
\begin{equation}\label{componentstheta}
\begin{array}{lcl}
d\theta_{(o_{H},w)}((X_{j},0),(0,\frac{\partial}{\partial x_{j}})) & = & -\frac{1}{2},\quad j = 1,\dots,{\bf r},\\[0,4pc]
d\theta_{(o_{H},w)}((\xi^{s}_{\lambda},0),(\zeta^{s}_{\lambda},0)) & = & \frac{1}{2}\lambda_{\mathbb R}(w),\quad s = 1,\dots ,m_{\lambda},\:\lambda\in \Sigma^{+},
\end{array}
\end{equation}
the rest of components of $d\theta_{(o_{H},w)}$ being zero. On the other hand, the fundamental $2$-form $\omega :={\bf g}(\cdot,J^{q}\cdot)$ at $(o_{H},w)$ for ${\bf g} = {\bf g}^{q,a_{0},a_{\lambda}},$ has the following non-zero components:
 $$
 \begin{array}{lcl}
 \omega_{(o_{H},w)}((X_{j},0),(0,\frac{\partial}{\partial x_{j}}(w)) = -a_{0}^{2}(w),\quad j = 1,\dots,{\bf r},\\[0.4pc]
 \omega_{(o_{H},w)}((\xi^{s}_{\lambda},0),(\zeta^{s}_{\lambda},0))= a_{\lambda}(w)q_{\lambda}(w),\quad s = 1,\dots,m_{\lambda},\;\lambda\in \Sigma^{+}.
 \end{array}
 $$
 Hence, from (\ref{componentstheta}) and using (\ref{alambda}), we have that $\omega = 2a_{0}^{2}d\theta.$ Since $a_{0}$ is a constant function, it is a symplectic structure and, using Theorem \ref{TJc}, $(J^{q},{\bf g}^{q,a_{0},a_{\lambda}})$ is extended to an almost K\"ahler structure throught $T(G/K).$ For the last part, we use directly Theorem \ref{tunique}.
\end{proof}
 
\begin{remark}\label{ralmostkahler}{\rm In the above theorem, for each smooth function $q,$ the almost K\"ahler metrics obtained are, using (\ref{gnew}), homothetic with coefficient $a_{0}^{2}$ to the metric ${\bf g}^{q}$ on $G/H\times W,$ extendible to $T(G/K)$ and determined by \begin{equation}\label{gq}
{\bf g}^{q} = {\bf g}^{q,1,\frac{\lambda_{\mathbb R}}{q_{\lambda}}}.
\end{equation}
In particular, if $q = {\rm Id}_{{\mathbb R}^{+}} ,$ ${\bf g}^{q}$ is the Sasaki metric $g^{S}.$}
\end{remark}

\section{Contact metric structures on tangent sphere bundles}

Before starting with the contents of this section, let's briefly recall some basic concepts about almost contact metric structures. For more information, see \cite{Bl}. An odd-dimensional smooth manifold $M$ is called {\em almost contact} if it admits a $(\varphi,\xi,\eta)$-structure, where $\varphi$ is a tensor field of type $(1,1),$ $\xi$ is a vector field and $\eta$ is a $1$-form, such that
\[
\varphi^{2} = -{\rm I} + \eta\otimes \xi,\quad \eta(\xi) = 1.
\]
Then $\varphi\xi = 0$ and $\eta\circ\varphi =0.$ If $M$ is equipped with a Riemannian metric $g$ such that
\[
g(\varphi X,\varphi Y) = g(X,Y) - \eta(X)\eta(Y),
\]
for all vector fields $X,Y$ on $M,$ $(M,\varphi,\xi,\eta,g)$ is said to be an {\em almost contact metric manifold} and $g$ is called a {\em compatible metric}. Moreover, we say that $(\varphi,\xi,\eta,g)$ is {\em contact metric} if $d\eta(X,Y) =g(X,\varphi Y).$ Then a contact metric structure is determined by the pair $(\xi,g)$ (or $(\eta,g))$ and $\xi$ is known as its {\em characteristic vector field}. If, in addition, $\xi$ is a Killing vector field, then the manifold is called {\em $K$-contact}. Recall that an almost contact structure $(\varphi,\xi,\eta)$ is said to be {\em normal} if the $(1,1)$-tensor field $[\varphi,\varphi](X,Y) + 2{\rm d}\eta(X,Y)\xi$ vanishes for all vector fields $X,Y.$ A contact metric structure $(\xi, g)$ which is normal is called a {\em Sasakian structure}. Any Sasakian manifold is always $K$-contact. The converse holds for the three-dimensional case, but it may be not true in higher dimension. 

 Here we look for $G$-invariant Riemannian metrics $\tilde{\bf g}$ on $D_{r}(G/K)\cong G/H\times {\mathcal S}_{W}(r)$ (see Proposition \ref{ll1}), such that the pair $(\xi = \frac{1}{\kappa}\xi^{S},\tilde{\bf g})$  is a contact metric structure, for some smooth function $\kappa\colon {\mathcal S}_{W}(r)\to {\mathbb R}^{+}.$ Each of these structures will be expressed as induced from $(J^{q},{\bf g}^{q,a_{0},a_{\lambda}}),$ for certain smooth functions $q,a_{0}$ and $a_{\lambda},$ $\lambda\in \Sigma^{+}.$ 
 
  Let us first suppose that $G/K$ is a compact rank-one symmetric space.
 
  \begin{theorem}\label{mainrank1} If ${\rm rank}\;G/K = 1,$ any $G$-invariant Riemannian metric $\tilde{\bf g}$ on the tangent sphere bundle $T_{r}(G/K)= G/H$ of radius $r>0,$ such that the pair $(\xi = \frac{1}{\kappa}\xi^{S},\tilde{\bf g})$ is a contact metric structure for some constant $\kappa>0,$ is determined at the origin $o_{H}\in G/H$ by
  \begin{equation}\label{gc}
 \tilde{\bf g}_{o_{H}} = \kappa^{2}\langle\cdot,\cdot\rangle_{\mathfrak a} + \frac{\kappa}{2q_{\varepsilon}}\langle\cdot,\cdot\rangle_{{\mathfrak m}_{\varepsilon}} + \frac{\kappa}{4q_{\varepsilon/2}}\langle\cdot,\cdot\rangle_{{\mathfrak m}_{\varepsilon/2}} + \frac{\kappa q_{\varepsilon}}{2}\langle\cdot,\cdot\rangle_{{\mathfrak k}_{\varepsilon}}  + \frac{\kappa q_{\varepsilon/2}}{4}\langle\cdot,\cdot\rangle_{{\mathfrak k }_{\varepsilon/2}},
 \end{equation} 
 for some positive constants $q_{\lambda},$ $\lambda\in \Sigma^{+}.$ Moreover, $(\xi,\tilde{\bf g})$ is $K$-contact, indeed Sasakian, if and only if $q_{\lambda} = 1,$ for each $\lambda\in \Sigma^{+},$ or equivalently 
 \begin{equation}\label{eqsasakian}
 \tilde{\bf g}_{o_{H}} = \kappa^{2}\langle\cdot,\cdot\rangle_{\mathfrak a} + \frac{\kappa}{2}\langle\cdot,\cdot\rangle_{{\mathfrak m}_{\varepsilon}\oplus{\mathfrak k}_{\varepsilon}}+\frac{\kappa}{4}\langle\cdot,\cdot\rangle_{{\mathfrak m}_{\varepsilon/2}\oplus{\mathfrak k}_{\varepsilon/2}}.
 \end{equation}
 \end{theorem}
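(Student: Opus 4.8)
The plan is to work directly on the homogeneous space $G/H\cong T_{r}(G/K)$ (recall that in the rank-one case $D_{r}(G/K)=T_{r}(G/K)$ and, by (\ref{xixi}), $\xi^{S}$ is the $G$-invariant vector field with $\xi^{S}_{o_{H}}=X$, where $X$ is the unit vector of ${\mathfrak a}$ with $\varepsilon_{\mathbb R}(X)=1$). Since $\overline{\mathfrak m}={\mathfrak a}\oplus{\mathfrak m}_{\varepsilon}\oplus{\mathfrak m}_{\varepsilon/2}\oplus{\mathfrak k}_{\varepsilon}\oplus{\mathfrak k}_{\varepsilon/2}$ is ${\rm Ad}(H)$-irreducible \cite{JC}, every $G$-invariant metric on $G/H$ has the form (\ref{Bb}) with positive constants $a_{0},a_{\varepsilon},a_{\varepsilon/2},b_{\varepsilon},b_{\varepsilon/2}$ (with the convention ${\mathfrak m}_{\varepsilon/2}={\mathfrak k}_{\varepsilon/2}=0$ for ${\mathbb S}^{n}$ and ${\mathbb R}{\mathbf P}^{n}$), so the task is to single out those values of the five constants for which $(\xi=\tfrac1\kappa\xi^{S},\tilde{\bf g})$ is contact metric. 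The $1$-form $\eta:=\tilde{\bf g}(\xi,\cdot)$ is $G$-invariant with $\eta_{o_{H}}=\tfrac{a_{0}^{2}}{\kappa}\langle X,\cdot\rangle$ on ${\mathfrak a}$ and vanishing on $\sum_{\lambda}({\mathfrak m}_{\lambda}\oplus{\mathfrak k}_{\lambda})$, so $\eta(\xi)=1$ already forces $a_{0}=\kappa$.

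The next step is to compute $d\eta$. By $G$-invariance, $d\eta_{o_{H}}(\mu^{\tau_{H}},\nu^{\tau_{H}})=-\tfrac12\eta_{o_{H}}([\mu,\nu]_{\overline{\mathfrak m}})$ for $\mu,\nu\in\overline{\mathfrak m}$ (with the same normalization of the exterior derivative as in Lemma \ref{ldtheta}). Since $\eta_{o_{H}}$ only detects the ${\mathfrak a}$-part and the brackets $[{\mathfrak a},\cdot]$, $[{\mathfrak m},{\mathfrak m}]$, $[{\mathfrak k},{\mathfrak k}]$ have trivial ${\mathfrak a}$-component, only the brackets $[\xi^{s}_{\lambda},\zeta^{t}_{\mu}]_{\mathfrak a}$ matter; combining ${\rm Ad}$-invariance of $\langle\cdot,\cdot\rangle$ with (\ref{eq.ms4.3}) gives $\langle[\xi^{s}_{\lambda},\zeta^{t}_{\mu}],X\rangle=-\lambda_{\mathbb R}(X)\delta_{\lambda\mu}\delta_{st}$, whence
\[
d\eta_{o_{H}}\big((\xi^{s}_{\lambda},0),(\zeta^{s}_{\lambda},0)\big)=\tfrac{\kappa}{2}\lambda_{\mathbb R}(X),
\]
and all the remaining components, in particular those involving the $\xi$-direction $X$, vanish. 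I would then define the $(1,1)$-tensor $\varphi$ by $\tilde{\bf g}(Z,\varphi Y)=d\eta(Z,Y)$: it is automatically $\tilde{\bf g}$-skew, and the vanishing of $d\eta$ along $X$ gives $\varphi\xi=0$ and $\eta\circ\varphi=0$; a short computation yields $\varphi\xi^{s}_{\lambda}=-\tfrac{\kappa\lambda_{\mathbb R}(X)}{2b_{\lambda}}\zeta^{s}_{\lambda}$ and $\varphi\zeta^{s}_{\lambda}=\tfrac{\kappa\lambda_{\mathbb R}(X)}{2a_{\lambda}}\xi^{s}_{\lambda}$.

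Now $(\xi,\tilde{\bf g})$ is contact metric precisely when $d\eta(X,Y)=\tilde{\bf g}(X,\varphi Y)$ (true by construction), $\eta(\xi)=1$ (already imposed) and $\varphi^{2}=-{\rm I}+\eta\otimes\xi$; once the last holds, the compatibility $\tilde{\bf g}(\varphi X,\varphi Y)=\tilde{\bf g}(X,Y)-\eta(X)\eta(Y)$ follows from it together with the skew-symmetry of $\varphi$. Imposing $\varphi^{2}\xi^{s}_{\lambda}=-\xi^{s}_{\lambda}$ yields the single relation $a_{\lambda}b_{\lambda}=\tfrac14\kappa^{2}\lambda_{\mathbb R}(X)^{2}$; since $\lambda_{\mathbb R}(X)=1$ for $\lambda=\varepsilon$ and $=\tfrac12$ for $\lambda=\varepsilon/2$, setting $q_{\lambda}:=\sqrt{b_{\lambda}/a_{\lambda}}>0$ turns this into exactly the metric (\ref{gc}), and conversely any metric of the shape (\ref{gc}) is contact metric by the same computation. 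Comparing $\varphi$ and $\tilde{\bf g}$ with (\ref{Jq2}) and (\ref{gnew}) shows $\varphi$ is the tangential part of the structure $J^{q}$ and $\tilde{\bf g}$ the restriction of ${\bf g}^{q,a_{0},a_{\lambda}}$ to $T_{r}(G/K)$ for any smooth $q\colon{\mathbb R}^{+}\to{\mathbb R}^{+}$ with $q(r)=q_{\varepsilon}$, $q(r/2)=q_{\varepsilon/2}$, with $a_{0}\equiv\kappa$ and a suitable $a_{\lambda}$; since $\xi=-J^{q}N$ for the ${\bf g}^{q,a_{0},a_{\lambda}}$-unit normal $N$ to $T_{r}(G/K)$, the structure is induced from $(J^{q},{\bf g}^{q,a_{0},a_{\lambda}})$.

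For the last statement, $\kappa$ being constant, $\xi$ is Killing iff $\xi^{S}$ is, which by the rank-one computation carried out in the proof of Theorem \ref{tmain} happens iff $a_{\lambda}=b_{\lambda}$ for every $\lambda\in\Sigma^{+}$, that is, iff $q_{\lambda}=1$; substituting this into (\ref{gc}) gives (\ref{eqsasakian}), and by \cite{JC} these $K$-contact structures are in fact Sasakian. I expect the one genuinely delicate point to be the bracket computation for $d\eta$ and the careful tracking of the normalization constants (including the factor $\tfrac12$ in the exterior derivative, fixed as in Lemma \ref{ldtheta}); the rest is routine bookkeeping.
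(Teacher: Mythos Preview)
Your proposal is correct and follows essentially the same route as the paper: write an arbitrary $G$-invariant metric in the form (\ref{Bb}), fix $a_{0}=\kappa$ from $\eta(\xi)=1$, compute $d\eta_{o_{H}}(u,v)=-\tfrac{\kappa}{2}\langle[X,u],v\rangle$, solve $d\eta=\tilde{\bf g}(\cdot,\varphi\cdot)$ for $\varphi$, impose $\varphi^{2}=-{\rm I}+\eta\otimes\xi$ to get $a_{\lambda}b_{\lambda}=\tfrac{\kappa^{2}\lambda_{\mathbb R}(X)^{2}}{4}$, and then invoke Theorem~\ref{tmain} and \cite{JC} for the $K$-contact/Sasakian part. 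Your extra paragraph linking the resulting $(\varphi,\tilde{\bf g})$ to the induced structure from $(J^{q},{\bf g}^{q,a_{0},a_{\lambda}})$ anticipates what the paper records separately in Remark~\ref{rrank1}, but is not part of the proof proper.
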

\begin{proof} Each $G$-invariant Riemannian metric $\tilde{\bf g}$ on $G/H$ is expressed at the origin $o_{H}$ as in (\ref{Bb}), for positive constants $a_{0},$ $a_{\lambda}$ and $b_{\lambda},$ for $\lambda\in \Sigma^{+}.$ Because $\tilde{\bf g}(\xi,\xi) = 1,$ it follows that $a_{0}=\kappa$ and the one-form defined by $\eta = \tilde{\bf g}(\xi,\cdot)$ is the $G$-invariant one-form on $G/H$ such that $\eta_{o_{H}} = \kappa \eta^{S}_{o_{H}}.$ Then, for $u,v\in \overline{\mathfrak m},$ we have
$$
\begin{array}{lcl}
d\eta_{o_{H}}(u,v) & = & d\eta_{o_{H}}(u^{\tau_{H}},v^{\tau_{H}}) = -\frac{1}{2}\eta_{o_{H}}([u^{\tau_{H}},v^{\tau_{H}}]) \\[0.4pc]
& =&  -\frac{1}{2}\eta_{o_{H}}([u,v]_{\overline{\mathfrak m}}) = -\frac{\kappa}{2}\langle [X,u],v\rangle,
\end{array}
$$
where $u^{\tau_{H}}$ and $v^{\tau_{H}}$ denote the local vector fields on $G/H$ around $o_{H}$ with $u^{\tau_{H}}_{o_{H}} = u$ and $v^{\tau_{H}}_{o_{H}} = v,$ defined in (\ref{tautau}). Hence, using (\ref{eq.ms4.3}), it follows that
\[
d\eta_{o_{H}}(\xi^{s}_{\lambda},\zeta^{s}_{\lambda}) = \frac{\kappa\lambda_{\mathbb R}(X)}{2}, \quad s = 1,\dots ,m_{\lambda},\; \lambda\in \Sigma^{+},
\]
the rest of components of $d\eta_{o_{H}}$ being zero. Then, from (\ref{Bb}), the $G$-invariant $(1,1)$-tensor field $\varphi$ such that $d\eta = \tilde{\bf g}(\cdot,\varphi \cdot)$ is determined at $o_{H}$ by
\[
\varphi_{o_{H}}\xi^{s}_{\lambda} = -\frac{\kappa \lambda_{\mathbb R}(X)}{2b_{\lambda}}\zeta^{s}_{\lambda},\quad \varphi_{o_{H}}\zeta^{s}_{\lambda} = \frac{\kappa\lambda_{\mathbb R}(X)}{2a_{\lambda}}\xi^{s}_{\lambda}.
\]
Since $\varphi^{2} = -I + \eta\otimes \xi,$ we have that
\begin{equation}\label{ab}
b_{\lambda} = \frac{\kappa^{2}(\lambda_{\mathbb R}(X))^{2}}{4a_{\lambda}}
\end{equation}
and then,
\begin{equation}\label{varphivarphi}
 \begin{array}{lclclcl}
 \varphi_{o_{H}}\xi^{s}_{\lambda} & = & -\frac{1}{q_{\lambda}}\zeta^{s}_{\lambda}, & & \varphi_{o_{H}} \zeta^{s}_{\lambda} & = & q_{\lambda}\xi^{s}_{\lambda},
 \end{array}
 \end{equation}
 for $q_{\lambda} = \frac{\kappa\lambda_{\mathbb R}(X)}{2a_{\lambda}}.$ Therefore, using (\ref{ab}) on (\ref{Bb}), we obtain (\ref{gc}) and the almost contact metric structure $(\varphi,\xi,\eta,\tilde{\bf g})$ is then contact metric.

 The second part of the theorem follows from Theorem \ref{tmain}, taking into account that $\xi = \frac{1}{\kappa}\xi^{S}$ is Killing if and only if $a_{\lambda} = b_{\lambda},$ which is equivalent, using (\ref{ab}), to be $q_{\lambda}$ equals to $1,$ for each $\lambda\in \Sigma^{+}.$ This proves (\ref{eqsasakian}). Finally, in \cite[Theorem 1.1]{JC}, the author has proved that $(\xi = \frac{1}{\kappa}\xi^{S},\tilde{\bf g}),$ where $\tilde{\bf g}$ satisfies (\ref{eqsasakian}) and $\kappa>0,$ is indeed Sasakian.
\end{proof}
 
For a general contact metric structure $(\xi,g),$ the tensor field $h$ given by $h = \frac{1}{2}{\mathcal L}_{\xi}\varphi$ enjoys many important properties. In particular, $h$ vanishes if and only if $\xi$ is Killing, $h\xi = 0,$ $h$ anti-commutes with $\varphi$ and ${\rm tr}\;h = 0.$ Moreover, for the Levi-Civita connection $\nabla$ of $g,$
\begin{equation}\label{nablanabla}
 \nabla_{X}\xi = -\varphi X - \varphi hX.
 \end{equation}
 
 Using (\ref{eq.ms4.3}) and (\ref{varphivarphi}) and also (\ref{nablanabla}), we obtain the following.
\begin{proposition} Under the conditions of {\rm Theorem \ref{mainrank1}}, the tensor field $h$ associated to the $G$-invariant contact metric structure $(\xi = \frac{1}{\kappa}\xi^{S},\tilde{\bf g})$ is determined by
\[
h(\xi^{s}_{\lambda}) = \frac{\lambda_{\mathbb R}(X)}{2\kappa q_{\lambda}}(q^{2}_{\lambda}-1)\xi^{s}_{\lambda},\quad h(\zeta^{s}_{\lambda})  = \frac{\lambda_{\mathbb R}(X)}{2\kappa q_{\lambda}}(1- q^{2}_{\lambda})\zeta^{s}_{\lambda},
\]
for all $\lambda \in \Sigma^{+}$ and $s = 1,\dots,m_{\lambda}.$ Moreover, the Levi-Civita connection $\nabla^{\tilde{\bf g}}$ of $\tilde{\bf g},$ satisfies
\[
\nabla^{\tilde{\bf g}}_{\xi^{s}_{\lambda}}\xi = \frac{1}{q_{\lambda}}\Big(1 + \frac{\lambda_{\mathbb R}(X)}{2\kappa q_{\lambda}}(q^{2}_{\lambda}-1)\Big)\zeta^{s}_{\lambda},\quad \nabla^{\tilde{g}}_{\zeta^{s}_{\lambda}}\xi = -q_{\lambda}\Big(1 + \frac{\lambda_{\mathbb R}(X)}{2\kappa q_{\lambda}}(1- q^{2}_{\lambda})\Big)\xi^{s}_{\lambda}.
\]
\end{proposition}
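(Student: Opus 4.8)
The plan is to compute directly the two tensor fields $h$ and $\nabla^{\tilde{\bf g}}\xi$ at the origin $o_H$ using the formulas that have already been established in the rank-one setting, namely \eqref{eq.ms4.3} for the bracket relations, \eqref{varphivarphi} for the action of $\varphi$, and the general identities recalled just before the statement: $h=\tfrac12\mathcal L_\xi\varphi$, $h\xi=0$, and $\nabla_X\xi=-\varphi X-\varphi hX$ from \eqref{nablanabla}. Since all structures involved are $G$-invariant, it suffices to evaluate everything at $o_H$ on the basis vectors $\xi^s_\lambda,\zeta^s_\lambda$, extending them to the local fields $(\xi^s_\lambda)^{\tau_H},(\zeta^s_\lambda)^{\tau_H}$ of \eqref{tautau}, whose bracket at $o_H$ is the $\overline{\mathfrak m}$-component of the Lie bracket; combined with $\xi^S_{o_H}=X$ (Proposition \ref{estandard}), this reduces all Lie derivatives to bracketing with $X$.

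First I would compute $h(\xi^s_\lambda)=\tfrac{1}{2\kappa}(\mathcal L_{\xi^S}\varphi)(\xi^s_\lambda)$. Writing $(\mathcal L_{\xi^S}\varphi)(\xi^s_\lambda)=[\xi^S,\varphi\xi^s_\lambda]-\varphi[\xi^S,\xi^s_\lambda]$ and passing to the $\tau_H$-extensions, the brackets become $[X,-\tfrac{1}{q_\lambda}\zeta^s_\lambda]_{\overline{\mathfrak m}}$ and $[X,\xi^s_\lambda]_{\overline{\mathfrak m}}$, which by \eqref{eq.ms4.3} equal $-\tfrac{\lambda_{\mathbb R}(X)}{q_\lambda}\xi^s_\lambda$ and $-\lambda_{\mathbb R}(X)\zeta^s_\lambda$ respectively. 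Applying $\varphi$ via \eqref{varphivarphi} to the second term and collecting, one obtains $h(\xi^s_\lambda)=\tfrac{\lambda_{\mathbb R}(X)}{2\kappa q_\lambda}(q_\lambda^2-1)\xi^s_\lambda$; the computation for $\zeta^s_\lambda$ is symmetric and gives the stated eigenvalue with the opposite sign, consistent with the fact that $h$ anti-commutes with $\varphi$. One should also note $h\xi=0$, which is automatic since $\xi=\tfrac1\kappa X$ and $X$ lies in $\mathfrak a$, on which $\varphi$ and all brackets with $X$ vanish.

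Then I would feed these into \eqref{nablanabla}: $\nabla^{\tilde{\bf g}}_{\xi^s_\lambda}\xi=-\varphi\xi^s_\lambda-\varphi h\xi^s_\lambda=\tfrac{1}{q_\lambda}\zeta^s_\lambda-\tfrac{\lambda_{\mathbb R}(X)}{2\kappa q_\lambda}(q_\lambda^2-1)\,\varphi\xi^s_\lambda$, and using $\varphi\xi^s_\lambda=-\tfrac1{q_\lambda}\zeta^s_\lambda$ again the second term becomes $+\tfrac{\lambda_{\mathbb R}(X)}{2\kappa q_\lambda^2}(q_\lambda^2-1)\zeta^s_\lambda$; factoring out $\tfrac1{q_\lambda}\zeta^s_\lambda$ yields the displayed expression. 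The computation for $\zeta^s_\lambda$ is entirely parallel, using $\varphi\zeta^s_\lambda=q_\lambda\xi^s_\lambda$ and $h\zeta^s_\lambda=\tfrac{\lambda_{\mathbb R}(X)}{2\kappa q_\lambda}(1-q_\lambda^2)\zeta^s_\lambda$. I do not expect a genuine obstacle here: the only point requiring care is bookkeeping of signs and of the distinction between $\varphi$ applied \emph{before} versus \emph{after} bracketing with $X$ (i.e.\ that $[X,\cdot]_{\overline{\mathfrak m}}$ and $\varphi$ need not commute, although in fact on the $(\xi^s_\lambda,\zeta^s_\lambda)$-plane they differ only by the scalar $q_\lambda^2$, which is precisely what produces the $q_\lambda^2-1$ factors). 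A secondary point worth a remark is that one must check the formulas are basis-independent within each $\mathfrak m_\lambda$, which follows because $\varphi$, $h$ and $\nabla\xi$ are all $\mathrm{Ad}(H)$-equivariant endomorphisms acting as the listed scalars on the whole of $\mathfrak m_\lambda$ and $\mathfrak k_\lambda$.
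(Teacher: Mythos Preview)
Your proposal is correct and follows essentially the same approach as the paper, which simply states that the result follows from \eqref{eq.ms4.3}, \eqref{varphivarphi}, and \eqref{nablanabla}. Your write-up supplies the explicit computations that the paper omits, and the sign and coefficient bookkeeping you carry out is accurate.
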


Since, for each $r>0$ and any symmetric space $G/K$ of compact type, the outward unit normal vector field on the hypersurface $\iota_{r}\colon G/H\times {\mathcal S}_{W}(r)\to (G/H\times W, {\bf g}^{q,a_{0},a_{\lambda}}),$ at a point $(o_{H},w),$ $w\in {\mathcal S}_{W}(r),$ is $\frac{1}{a_{0}(w)}N_{(o_{H},w)},$ the induced almost contact metric structure $(\varphi^{q},\xi,\eta, \tilde{\bf g}^{q,a_{0},a_{\lambda}})$ on $G/H\times {\mathcal S}_{W}(r)$ from $(J^{q},{\bf g}^{q,a_{0},a_{\lambda}})$ is determined by 
$$
\begin{array}{l}
\xi_{(o_{H},w)} = \frac{1}{a_{0}(w)}\xi^{S}_{(o_{H},w)} = \frac{1}{a_{0}(w)r}(w,0);\\[0.4pc]
 \eta_{(o_{H},w)}(u,v_{w}) = a_{0}(w)\eta^{S}(u,v_{w}) = \frac{a_{0}(w)}{r}\langle u,w\rangle,\quad (u,v_{w})\in \overline{\mathfrak m}\times T_{w}({\mathcal S}_{W}(r));\\[0.4pc]
{\varphi}^{q}_{(o_{H},w)}(u,v_{w}) = (-v,u_{w} - \frac{1}{r^{2}}\langle u,w\rangle w_{w}),\quad \mbox{\rm for all}\; (u,v_{w})\in {\mathfrak a}\times T_{w}({\mathcal S}_{W}(r)),\\[0.4pc]
\varphi^{q}_{(o_{H},w)}(\xi^{s}_{\lambda},0) = (-\frac{1}{q_{\lambda}(w)}\zeta^{s}_{\lambda},0),\quad \varphi^{q}_{(o_{H},w)}(\zeta^{s}_{\lambda},0) = (q_{\lambda}(w)\xi^{s}_{\lambda},0),\quad s =1,\dots ,m_{\lambda},$ $\lambda\in \Sigma^{+};\\[0.4pc]
\tilde{\bf g}^{q,a_{0},a_{\lambda}}_{(o_{H},w)}  = a_{0}^{2}(w)(\pi_{1}^{*}\langle\cdot,\cdot\rangle_{\mathfrak a} + \pi_{2}^{*}\langle\cdot,\cdot\rangle_{{\mathcal S}_{W}(r)}) + \sum_{\lambda\in \Sigma^{+}}a_{\lambda}(w)\pi_{1}^{*}(\langle\cdot,\cdot\rangle_{{\mathfrak m}_{\lambda}} + (q_{\lambda}(w))^{2}\langle\cdot,\cdot\rangle_{{\mathfrak k}_{\lambda}}) .
\end{array}
$$

 \begin{remark}\label{rrank1}{\rm In Theorem \ref{mainrank1}, each contact metric structure $(\xi = \frac{1}{\kappa}\xi^{S},\tilde{\bf g})$ on $G/H,$ can be seen as induced from $(J^{q}, {\bf g}^{q,a_{0},a_{\lambda}}),$ where $q$ is any smooth function $q\colon {\mathbb R}^{+}\to {\mathbb R}^{+}$ such that $q(\lambda_{\mathbb R}(r)) = q_{\lambda}$ and $a_{0}$ and $a_{\lambda}$ are the constant functions $a_{0}(t) = \kappa$ and $a_{\lambda}(t) = \frac{\kappa\lambda_{\mathbb R}(X)}{2q_{\lambda}},$ for each $\lambda\in \Sigma^{+}.$ }
 \end{remark}

 The subset $\overline{{\mathcal S}_{W}(r)}\setminus {\mathcal S}_{W}(r)$ of limit points of ${\mathcal S}_{W}(r)$ is nonempty when ${\rm rank}\;G/K\geq 2.$ The following result is a direct consequence from the proof of Theorem \ref{TJc}.  
 
 \begin{lemma}\label{cTJc} If ${\rm rank}\,G/K\geq 2,$ the induced almost contact metric structure $(\varphi^{q},\xi,\eta,\tilde{\bf g}^{q,a_{0},a_{\lambda}})$ on $G\times_{K}{\mathcal S}_{\mathfrak m}^{R}(r) = \chi(G/H\times {\mathcal S}_{W}(r)),$ for some $r>0,$ can be extended to whole $T_{r}(G/K)$ if and only if $\lim_{t\to 0^{+}}\frac{q(t)}{t}\in {\mathbb R}^{+}$ and, for each limit point  $\overline{w} = \lim_{m\to \infty}w_{m} \in \overline{{\mathcal S}_{W}(r)}\setminus {\mathcal S}_{W}(r)$ of some sequence $w_{m}\in {\mathcal S}_{W}(r),$ $m\in {\mathbb N},$ the limits $lim_{m\to \infty}a_{0}(w_{m})$ and $\lim_{m\to \infty}a_{\lambda}(w_{m}),$ for each $\lambda\in \Sigma^{+},$ take (finite) positive values.
 \end{lemma}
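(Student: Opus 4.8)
The plan is to repeat, almost verbatim, the argument proving Theorem~\ref{TJc}, now restricted to the hypersurface. First I would recall from Proposition~\ref{ll1} and~(\ref{TrSm}) that, under the $G$-equivariant diffeomorphisms $\chi\colon G/H\times\mathcal{S}_{W}(r)\to G\times_{K}\mathcal{S}^{R}_{\mathfrak m}(r)$ and $\phi\colon G\times_{K}\mathcal{S}_{\mathfrak m}(r)\to T_{r}(G/K),$ the tangent sphere bundle $T_{r}(G/K)$ is realized as the hypersurface $\langle x,x\rangle=r^{2}$ of $T(G/K)=G\times_{K}{\mathfrak m},$ and that $(\varphi^{q},\xi,\eta,\tilde{\mathbf g}^{q,a_{0},a_{\lambda}})$ is, by construction, the almost contact metric structure induced on this hypersurface from $(J^{q},{\mathbf g}^{q,a_{0},a_{\lambda}})$ by means of the unit normal $\frac{1}{a_{0}}N,$ with $N$ as in~(\ref{Nxi}). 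Hence its extendibility to all of $T_{r}(G/K)$ is governed exactly by the extendibility, along the hypersurface, of the normal field $\frac{1}{a_{0}}N$ and of the restrictions of $J^{q}$ and ${\mathbf g}^{q,a_{0},a_{\lambda}}.$

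Next I would transport the displayed formulas above for $\xi,$ $\eta,$ $\varphi^{q}$ and $\tilde{\mathbf g}^{q,a_{0},a_{\lambda}}$ at $(o_{H},w),$ $w\in\mathcal{S}_{W}(r),$ to an arbitrary point $[(a,x)]$ with $x={\rm Ad}_{k}w,$ $k\in K,$ in the same manner in which~(\ref{ff1}) and~(\ref{gg}) were obtained in the proof of Theorem~\ref{TJc}; Lemma~\ref{l1Ad} and the ${\rm Ad}(K)$-invariance of $\langle\cdot,\cdot\rangle$ are the only tools needed. The resulting expressions show that $\xi$ and $\eta$ depend only on $a_{0}(w)$ and the constant $r,$ whereas $\varphi^{q}$ and $\tilde{\mathbf g}^{q,a_{0},a_{\lambda}}$ depend, besides $a_{0}(w)$ and $a_{\lambda}(w),$ only on the ratios $q_{\lambda}(w)/\lambda_{\mathbb R}(w)=q(\lambda_{\mathbb R}(w))/\lambda_{\mathbb R}(w)$ and $\lambda_{\mathbb R}(w)/q_{\lambda}(w)$---precisely the combinations already occurring in~(\ref{ff1}) and~(\ref{gg}); the extra summand $-\frac{1}{r^{2}}\langle u,w\rangle\,w_{w}$ in the expression of $\varphi^{q}$ on ${\mathfrak a}\times T_{w}(\mathcal{S}_{W}(r))$ is polynomial in $w$ with constant coefficients, hence harmless.

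Then I would analyse the limits along a sequence $w_{m}\in\mathcal{S}_{W}(r)$ with $\lim_{m\to\infty}w_{m}=\overline w\in\overline{\mathcal{S}_{W}(r)}\setminus\mathcal{S}_{W}(r).$ Every such $\overline w$ lies on at least one wall $\lambda_{\mathbb R}=0$ of the Weyl chamber and, lying on the $r$-sphere, is nonzero; hence for the $\lambda$ with $\lambda_{\mathbb R}(\overline w)=0$ the ratio $q(\lambda_{\mathbb R}(w_{m}))/\lambda_{\mathbb R}(w_{m})$ converges to a finite positive value if and only if $\lim_{t\to0^{+}}q(t)/t\in{\mathbb R}^{+},$ while for the remaining $\lambda$ the smoothness and positivity of $q$ on ${\mathbb R}^{+}$ already provide it. Since $\xi=\frac{1}{a_{0}r}(w,0)$ and $\eta=\frac{a_{0}}{r}\langle\cdot,w\rangle$ extend across $\overline w$ without blowing up or degenerating precisely when $\lim_{m\to\infty}a_{0}(w_{m})$ is finite and positive, and since the ${\mathfrak a}$-, ${\mathfrak m}_{\lambda}$- and ${\mathfrak k}_{\lambda}$-blocks of $\varphi^{q}$ and $\tilde{\mathbf g}^{q,a_{0},a_{\lambda}}$ extend if and only if, in addition, $\lim_{m\to\infty}a_{\lambda}(w_{m})$ is finite and positive for every $\lambda\in\Sigma^{+},$ the stated equivalence follows exactly as in Theorem~\ref{TJc}. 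Finally, the algebraic identities $(\varphi^{q})^{2}=-I+\eta\otimes\xi,$ $\eta(\xi)=1$ and $\tilde{\mathbf g}^{q,a_{0},a_{\lambda}}(\varphi^{q}\cdot,\varphi^{q}\cdot)=\tilde{\mathbf g}^{q,a_{0},a_{\lambda}}-\eta\otimes\eta,$ which hold on the dense subset $G\times_{K}\mathcal{S}^{R}_{\mathfrak m}(r),$ persist under any smooth extension by continuity, so the extended tensors do form a $G$-invariant almost contact metric structure on $T_{r}(G/K).$

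I do not expect a genuine obstacle: the one point requiring care is to check that intersecting $W$ with the sphere $\mathcal{S}_{\mathfrak a}(r)$ introduces no singularities beyond those already present for $(J^{q},{\mathbf g}^{q,a_{0},a_{\lambda}})$ on $G/H\times W,$ and this is immediate from the explicit formulas, together with the observation that the unit normal $\frac{1}{a_{0}}N=\frac{1}{a_{0}r}(0,w_{w})$---the only new datum relative to Theorem~\ref{TJc}---is itself smooth and nowhere vanishing exactly when $a_{0}$ stays finite and bounded away from $0$ near $\overline{\mathcal{S}_{W}(r)}\setminus\mathcal{S}_{W}(r).$
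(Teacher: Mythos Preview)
Your proposal is correct and matches the paper's approach exactly: the paper states only that the lemma ``is a direct consequence from the proof of Theorem~\ref{TJc}'' and gives no further argument, so your plan of rerunning that proof on the hypersurface---transporting the formulas via Lemma~\ref{l1Ad}, identifying the same ratios $q(\lambda_{\mathbb R}(w))/\lambda_{\mathbb R}(w)$ as the sole potential singularities, and analysing limits along sequences in $\mathcal{S}_{W}(r)$---is precisely what is intended. Your added remarks about the unit normal and the persistence of the algebraic identities under continuous extension are more detailed than anything the paper supplies, but entirely in the same spirit.
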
 
 
 \begin{remark}{\rm If ${\rm rank}\;G/K = 2,$ the restrictions of the functions $a_{0},$ $\lambda_{\mathbb R},$ $q_{\lambda}$ and $a_{\lambda},$ for each $\lambda\in \Sigma^{+},$ to ${\mathcal S}_{W}(r),$ can be seen as functions on the open interval $]0,\theta_{\rm max}[.$ Hence, $(\varphi^{q},\xi,\eta,\tilde{\bf g}^{q,a_{0},a_{\lambda}})$ can be extended to $T_{r}(G/K)$ if and only if $\lim_{t\to 0^{+}}\frac{q(t)}{t},$ $\lim_{\theta\to 0^{+}}a_{0}(\theta),$ $\lim_{\theta\to \theta_{\rm max}^{-}}a_{0}(\theta),$ $\lim_{\theta\to 0^{+}}a_{\lambda}(\theta)$ and $\lim_{\theta\to\theta^{-}_{\rm max}}a_{\lambda}(\theta)$ take (finite) positive values.}
 \end{remark}

 \begin{theorem}\label{tcontact} The pair $(\xi= \frac{1}{a_{0}}\xi^{S},\tilde{\bf g}^{q,a_{0},a_{\lambda}})$ is a contact metric structure on $G/H\times {\mathcal S}_{W}(r),$ for each $r>0,$ if and only if 
 \begin{enumerate}
 \item[{\rm (i)}] ${a_{0}}_{\mid {\mathcal S}_{W}(r)}$ is a constant function, equal to $\frac{1}{2r}$ for ${\rm rank}\;G/K \geq 2,$ and 
 \item[{\rm (ii)}] the following equality
\[
a_{\lambda}(w) = \frac{a_{0}(w)\lambda_{\mathbb R}(w)}{2rq_{\lambda}(w)}
\]
holds for all $w\in {\mathcal S}_{W}(r).$
\end{enumerate}
Moreover, it is $K$-contact, indeed Sasakian, if and only if ${\rm rank}\;G/K = 1$ and $q_{\lambda}(r) = 1,$ for each $\lambda\in \Sigma^{+}.$ For ${\rm rank}\;G/K\geq 2,$ $(\xi,\tilde{\bf g}^{q,a_{0},a_{\lambda}})$ can be extended to the entire $T_{r}(G/K)$ if and only if $\lim_{t\to 0^{+}}\frac{q(t)}{t}\in {\mathbb R}^{+}.$
\end{theorem}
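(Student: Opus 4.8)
The plan is to reduce everything to a computation at the base points $(o_{H},w)$, $w\in{\mathcal S}_{W}(r)$, since $\xi$, $\eta=\tilde{\bf g}^{q,a_{0},a_{\lambda}}(\xi,\cdot)$, $\varphi^{q}$ and $\tilde{\bf g}^{q,a_{0},a_{\lambda}}$ --- hence also the $2$-forms $d\eta$ and $\Phi:=\tilde{\bf g}^{q,a_{0},a_{\lambda}}(\cdot,\varphi^{q}\cdot)$ --- are $G$-invariant. Since $(\varphi^{q},\xi,\eta,\tilde{\bf g}^{q,a_{0},a_{\lambda}})$ is the induced \emph{almost} contact metric structure, it is contact metric precisely when $d\eta=\Phi$. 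The first thing I would record is that, comparing the displayed formula for $\eta$ with Lemma \ref{ldtheta}, one has $\eta=\frac{1}{r}\,a_{0}\,\iota_{r}^{*}\theta$ on $G/H\times{\mathcal S}_{W}(r)$, so that $d\eta=\frac{1}{r}\,da_{0}\wedge\iota_{r}^{*}\theta+\frac{1}{r}\,a_{0}\,\iota_{r}^{*}d\theta$, with $\iota_{r}^{*}d\theta$ computed from Lemma \ref{ldtheta} and (\ref{eq.ms4.3}).

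Next I would extract the necessary conditions by evaluating $d\eta$ and $\Phi$ on the basis (\ref{basisxi}). On the pair $(\xi^{s}_{\lambda},0),(\zeta^{s}_{\lambda},0)$ the $da_{0}$-term drops out (because $\iota_{r}^{*}\theta$ vanishes on both vectors, $\mathfrak a$ being orthogonal to $\mathfrak m_{\lambda}$ and $\mathfrak m$ to $\mathfrak k$), and one gets $d\eta=\frac{1}{2r}\,a_{0}(w)\lambda_{\mathbb R}(w)$ against $\Phi=a_{\lambda}(w)q_{\lambda}(w)$; their equality is exactly (ii). When ${\rm rank}\,G/K\geq 2$ the pair $(Y_{j},0),(0,P_{k})$ is present and, again, $\iota_{r}^{*}\theta$ vanishes on both vectors, so $d\eta=-\frac{1}{2r}\,a_{0}(w)\langle Y_{j}(w),Y_{k}(w)\rangle$ against $\Phi=-a_{0}(w)^{2}\langle Y_{j}(w),Y_{k}(w)\rangle$; taking $j=k$, where $\langle Y_{j}(w),Y_{j}(w)\rangle=w_{j}^{2}+w_{j_{0}}^{2}>0$, forces $a_{0}(w)=\frac{1}{2r}$ for all $w$, which is (i) in the higher-rank case; for ${\rm rank}\,G/K=1$ the set ${\mathcal S}_{W}(r)$ is a single point, so $a_{0}$ is automatically constant and no further restriction on it appears. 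One then checks that $d\eta$ and $\Phi$ agree on every remaining pair of basis vectors --- vanishing on those involving $\xi^{S}$ --- using $[\mathfrak a,\mathfrak a]=0$, the orthogonality of the decomposition $\overline{\mathfrak m}=\mathfrak a\oplus\sum_{\lambda\in\Sigma^{+}}(\mathfrak m_{\lambda}\oplus\mathfrak k_{\lambda})$, (\ref{eq.ms4.3}), and the fact that $\xi^{S}\hook\iota_{r}^{*}d\theta=0$ (since tangent vectors to ${\mathcal S}_{W}(r)$ at $w$ are $\langle\cdot,\cdot\rangle$-orthogonal to $w$). Conversely, once $a_{0}$ is constant on ${\mathcal S}_{W}(r)$ one has $d\eta=\frac{1}{r}\,a_{0}\,\iota_{r}^{*}d\theta$, and the same component computation shows that (i) and (ii) give $d\eta=\Phi$; this establishes the equivalence.

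For the $K$-contact/Sasakian assertion I would note that $\xi=\frac{1}{a_{0}}\xi^{S}$ with $a_{0}$ a positive constant, so $\xi$ is Killing if and only if $\xi^{S}$ is; by Theorem \ref{tmain} this occurs only for ${\rm rank}\,G/K=1$ and never for ${\rm rank}\,G/K\geq 2$. In the rank-one case, by Remark \ref{rrank1} the structure obtained here is one of those of Theorem \ref{mainrank1}, the parameters $q_{\lambda}$ there corresponding to the values $q(\lambda_{\mathbb R}(w))$ here; hence, by Theorem \ref{mainrank1}, it is $K$-contact --- indeed Sasakian --- exactly when $q_{\lambda}(r)=1$ for all $\lambda\in\Sigma^{+}$, i.e.\ when $\tilde{\bf g}^{q,a_{0},a_{\lambda}}$ has the form (\ref{eqsasakian}). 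Finally, for ${\rm rank}\,G/K\geq 2$ the extension to all of $T_{r}(G/K)$ is settled by Lemma \ref{cTJc}: with $a_{0}=\frac{1}{2r}$ constant and $a_{\lambda}(w)=\frac{\lambda_{\mathbb R}(w)}{4r^{2}q(\lambda_{\mathbb R}(w))}$, the only limit along a sequence $w_{m}\to\overline{w}\in\overline{{\mathcal S}_{W}(r)}\setminus{\mathcal S}_{W}(r)$ that can fail to be finite and positive is $\lim_{m}a_{\lambda}(w_{m})$ for those $\lambda$ with $\lambda_{\mathbb R}(\overline{w})=0$, where it equals $\frac{1}{4r^{2}}\bigl(\lim_{t\to 0^{+}}q(t)/t\bigr)^{-1}$; so the extension exists if and only if $\lim_{t\to 0^{+}}q(t)/t\in\mathbb R^{+}$.

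The step I expect to be the main obstacle is the systematic comparison in the second paragraph: one must keep track of the correct identifications for every pair of basis vectors of (\ref{basisxi}) --- in particular the extra term $-\frac{1}{r^{2}}\langle u,w\rangle\,w_{w}$ in $\varphi^{q}$ on $\mathfrak a\times T_{w}{\mathcal S}_{W}(r)$, and which vectors actually lie in $T_{w}{\mathcal S}_{W}(r)$ --- so as to be sure that no constraint beyond (i) and (ii) is hidden and that the genuine rank-dependence of condition (i), coming from the presence of the directions $(Y_{j},0)$ and $(0,P_{k})$, is correctly located.
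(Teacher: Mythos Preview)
Your proposal is correct and follows essentially the same route as the paper: both evaluate $d\eta$ and $\tilde{\bf g}(\cdot,\varphi^{q}\cdot)$ on the basis (\ref{basisxi}), extract (ii) from the $(\xi^{s}_{\lambda},\zeta^{s}_{\lambda})$-pair and (i) from the $(Y_{j},P_{k})$-pair, and finish via Theorems \ref{tmain}, \ref{mainrank1} and Lemma \ref{cTJc}. The only difference is organizational --- you compute $d\eta$ through the identity $\eta=\frac{a_{0}}{r}\iota_{r}^{*}\theta$ and Lemma \ref{ldtheta}, whereas the paper computes $d\eta$ directly pair by pair (including the pair $(\xi,(0,P_{j}))$, cf.\ (\ref{q1}), which in your setup is the $da_{0}$-term and vanishes once $a_{0}$ is seen to be constant).
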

\begin{proof} If ${\rm rank}\;G/K = 1,$ the result follows directly from Theorem \ref{mainrank1} and Remark \ref{rrank1}, together with Theorem \ref{tmain}.
 
 Next, suppose that ${\rm rank}\;G/K= {\bf r} \geq 2$ and let $w\in {\mathcal U}^{\alpha}_{j_{0}}\subset {\mathcal S}_{W}(r),$ for some $\alpha\in\{0,1\}$ and $j_{0}\in \{1,\dots,{\bf r}\},$ where $w_{j_{0}} = \langle w,X_{j_{0}}\rangle \neq 0$ and ${\mathcal U}^{\alpha}_{j_{0}}$ is the open subset defined in (\ref{Ualpha}). Consider the basis of $\overline{\mathfrak m}\times T_{w}{\mathcal S}_{W}(r)\cong T_{(o_{H},w)}(G/H\times {\mathcal S}_{W}(r))$ given in (\ref{basisxi}). Then, putting $\tilde{\bf g} =  \tilde{\bf g}^{q,a_{0},a_{\lambda}},$ we get
\begin{equation}\label{gY}
\tilde{\bf g}_{(o_{H},w)}((Y_{j},0),(Y_{k},0)) = \tilde{\bf g}_{(o_{H},w)}((0,P_{j}),(0,P_{k})) = a_{0}^{2}(w)(w_{j_{0}}^{2}\delta_{jk} + w_{j}w_{k}),
\end{equation}
for all $j,k\in \{1,\dots,{\bf r}\},$ $j, k\neq j_{0}.$ Moreover, 
\begin{equation}\label{qY}
\begin{array}{l}
\eta_{(o_{H},w)}(Y_{j}(w),0) = \eta_{(o_{H},w)}(0,P_{j}(w)) = 0,\\[0.4pc]
\varphi^{q}_{(o_{H},w)}(Y_{j}(w),0) = (0,P_{j}(w)),\quad \varphi^{q}_{(o_{H},w)}(0,P_{j}(w)) = (-Y_{j}(w),0).
\end{array}
\end{equation}
Now, we calculate $d\eta_{(o_{H},w)}(\xi,(0,P_{j}),$ $d\eta_{(o_{H},w)}((Y_{j},0),(0,P_{k})),$ for $j,k=1,\dots,{\bf r},$ $j,k\neq j_{0},$ and $d\eta_{(o_{H},w)}((\xi^{s}_{\lambda},0),(\zeta^{s}_{\lambda},0)),$ for $s = 1,\dots m_{\lambda},$ $\lambda\in \Sigma^{+}.$ The rest of components of $d\eta_{(o_{H},w)},$ with respect to this basis, are zero.
$$
\begin{array}{lcl}
d\eta_{(o_{H},w)}(\xi,(0,P_{j})) & =  &-\frac{1}{2}\eta_{(o_{H},w)}([\xi,(0,P{j})])\\[0.4pc]
& =  & -\frac{1}{2r}\sum_{k=1}^{\bf r}\eta_{(o_{H},w)} ([(\frac{x_{k}}{a_{0}}(X_{k})^{\tau_{H}},0),(0,x_{j}\frac{\partial}{\partial x_{j_{0}}}-x_{j_{0}}\frac{\partial}{\partial x_{j}})])\\[0.5pc]
\hspace{1.5cm}&  =  & -\frac{1}{2r}\sum_{k=1}^{\bf r} (-w_{j}\frac{\partial(\frac{x_{k}}{a_{0}})}{\partial x_{j_{0}}}(w) + w_{j_{0}}\frac{\partial(\frac{x_{k}}{a_{0}} )}{\partial x_{j}}(w))\eta_{(o_{H},w)}(X_{k},0).
\end{array}
$$
Hence, because $\eta_{(o_{H},w)}(X_{k},0) = \frac{a_{0}(w)}{r}w_{k}$ and $\sum_{k=1}^{\bf r}w_{k}^{2} = r^{2},$ we get
\begin{equation}\label{q1}
 d\eta_{(o_{H},w)}(\xi,(0,P_{j}))=  \frac{1}{2a_{0}(w)}(w_{j_{0}}\frac{\partial a_{0}}{\partial x_{j}}(w) - w_{j}\frac{\partial a_{0}}{\partial x_{j_{0}}}(w)).
\end{equation}
$$
\begin{array}{lcl}
d\eta_{(o_{H},w)}((Y_{j},0),(0,P_{k})) & = & -\frac{1}{2}\eta_{(o_{H},w)}([((Y_{j})^{\tau_{H}},0),(0,P_{k})])\\[0.4pc]
& =  & -\frac{1}{2}\eta_{(o_{H},w)}(w_{j_{0}}\delta_{jk}X_{j_{0}}+ w_{k}X_{j},0) = -\frac{a_{0}(w)}{2r}(w_{j_{0}}^{2}\delta_{jk} + w_{j}w_{k}).
\end{array}
$$
Then, from (\ref{gY}),
\begin{equation}\label{q2}
d\eta_{(o_{H},w)}((Y_{j},0),(0,P_{k})) = -\frac{1}{2r a_{0}(w)}\tilde{\bf g}_{(o_{H},w)}((Y_{j},0),(Y_{k},0)).
\end{equation}
Finally, $d\eta_{(o_{H},w)}((\xi^{s}_{\lambda},0),(\zeta^{s}_{\lambda},0))  =  -\frac{1}{2}\eta_{(o_{H},w)}(([\xi^{s}_{\lambda},\zeta^{s}_{\lambda}]_{\overline{\mathfrak m}},0) = -\frac{a_{0}(w)}{2 r}\langle\xi^{s}_{\lambda},[\zeta^{s}_{\lambda},w]\rangle.$ Now, applying (\ref{eq.ms4.3}), we get
\begin{equation}\label{q3}
d\eta_{(o_{H},w)}((\xi^{s}_{\lambda},0),(\zeta^{s}_{\lambda},0))= \frac{a_{0}(w)}{2r}\lambda_{\mathbb R}(w).
\end{equation}
On the other hand, using (\ref{qY}), it follows that
\[
\tilde{\bf g}((Y_{j},0),\varphi^{q}(0,P_{k}))  =  - \tilde{\bf g}((Y_{j},0),(Y_{k},0)),\quad 
\tilde{\bf g}((\xi^{s}_{\lambda},0),\varphi^{q}(\zeta^{s}_{\lambda},0)) =  q_{\lambda}(w)a_{\lambda}(w),
\]
the rest of components of the $2$-form $\tilde{\bf g}(\cdot,\varphi^{q}\cdot)$ being zero. Therefore, applying (\ref{q1}), (\ref{q2}) and (\ref{q3}), $(\varphi^{q},\xi,\eta,\tilde{\bf g})$ is contact metric if and only if (i) and (ii) hold. The last part of the Theorem follows as a direct consequence of Theorem \ref{tmain} and Theorem \ref{mainrank1} and applying Lemma \ref{cTJc}.  \end{proof}

For each almost K\"ahler metric ${\bf g}^{q}$ in (\ref{gq}), Theorem \ref{tcontact} leads to the following result.

\begin{corollary}\label{cmain} The induced $G$-invariant structure on $T_{r}(G/K),$ $r>0,$ obtained from the almost K\"ahler structure $(J^{q},\frac{1}{4r^{2}}{\bf g}^{q})$ on $T(G/K),$ is contact metric, for each smooth function $q\colon {\mathbb R}^{+}\to {\mathbb R}^{+}$ such that $\lim_{t\to 0^{+}}\frac{q(t)}{t}\in {\mathbb R}^{+}.$
\end{corollary}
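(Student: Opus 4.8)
The plan is to recognise the rescaled metric $\frac{1}{4r^{2}}{\bf g}^{q}$ as a member of the family ${\bf g}^{q,a_{0},a_{\lambda}}$ for an explicit choice of the functions $a_{0}$ and $a_{\lambda}$, and then invoke Theorem~\ref{tcontact}. Recall from Remark~\ref{ralmostkahler} that ${\bf g}^{q} = {\bf g}^{q,1,\lambda_{\mathbb R}/q_{\lambda}}$, so that here $a_{0}\equiv 1$ and $a_{\lambda}(w) = \lambda_{\mathbb R}(w)/q_{\lambda}(w)$; and directly from the shape of~(\ref{gnew}), multiplying ${\bf g}^{q,a_{0},a_{\lambda}}$ by a positive constant $c^{2}$ produces the metric ${\bf g}^{q,c a_{0},c^{2}a_{\lambda}}$. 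Taking $c = 1/(2r)$ therefore gives
\[
\frac{1}{4r^{2}}{\bf g}^{q} \;=\; {\bf g}^{q,a_{0},a_{\lambda}},\qquad a_{0}\equiv\frac{1}{2r},\qquad a_{\lambda}(w) = \frac{\lambda_{\mathbb R}(w)}{4r^{2}\,q_{\lambda}(w)}\,,
\]
which is exactly the family singled out in Theorem~\ref{tKahler}; since $a_{0}$ is a positive constant and $\lim_{t\to 0^{+}}q(t)/t\in{\mathbb R}^{+}$, the pair $(J^{q},\frac{1}{4r^{2}}{\bf g}^{q})$ is an almost K\"ahler structure defined on the whole of $T(G/K)$. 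The structure it induces on $T_{r}(G/K)$ restricts, on the open dense set $D_{r}(G/K)\cong G/H\times{\mathcal S}_{W}(r)$, to the almost contact metric structure $(\varphi^{q},\xi,\eta,\tilde{\bf g}^{q,a_{0},a_{\lambda}})$ written out just before Lemma~\ref{cTJc}, with $\xi = (1/a_{0})\,\xi^{S} = 2r\,\xi^{S}$.

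It then suffices to verify the two conditions in Theorem~\ref{tcontact}. Condition~(i) is immediate: the restriction of $a_{0}$ to ${\mathcal S}_{W}(r)$ is the constant $1/(2r)$, which is precisely the value demanded when ${\rm rank}\,G/K\geq 2$. Condition~(ii) follows by substituting $a_{0}\equiv 1/(2r)$ into the required identity:
\[
\frac{a_{0}(w)\,\lambda_{\mathbb R}(w)}{2r\,q_{\lambda}(w)} \;=\; \frac{\lambda_{\mathbb R}(w)}{4r^{2}\,q_{\lambda}(w)} \;=\; a_{\lambda}(w)\,,
\]
which holds for every $w\in{\mathcal S}_{W}(r)$ and every $\lambda\in\Sigma^{+}$. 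Hence Theorem~\ref{tcontact} applies and $(\xi,\tilde{\bf g}^{q,a_{0},a_{\lambda}})$ is a contact metric structure on $G/H\times{\mathcal S}_{W}(r)$.

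It remains to pass from $D_{r}(G/K)$ to the full tangent sphere bundle. If ${\rm rank}\,G/K = 1$ then $D_{r}(G/K) = T_{r}(G/K) = G/H$ and there is nothing more to do. If ${\rm rank}\,G/K\geq 2$, the last clause of Theorem~\ref{tcontact} (equivalently Lemma~\ref{cTJc}) already yields the extension under the standing hypothesis $\lim_{t\to 0^{+}}q(t)/t\in{\mathbb R}^{+}$; explicitly, $a_{0}$ is constant and hence has a finite positive limit at every point of $\overline{{\mathcal S}_{W}(r)}\setminus{\mathcal S}_{W}(r)$, while for such a limit point $\overline{w}$ one writes $a_{\lambda}(w) = \frac{1}{4r^{2}}\,\lambda_{\mathbb R}(w)/q(\lambda_{\mathbb R}(w))$ and observes that, as $\lambda_{\mathbb R}(w)\to\lambda_{\mathbb R}(\overline{w})$, this converges to a finite positive value — equal to $\big(4r^{2}\lim_{t\to 0^{+}}q(t)/t\big)^{-1}$ in the case $\lambda_{\mathbb R}(\overline{w}) = 0$. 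Thus all the hypotheses of Lemma~\ref{cTJc} are met and the contact metric structure extends to all of $T_{r}(G/K)$, completing the proof.

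The argument is essentially bookkeeping once Theorems~\ref{tKahler} and~\ref{tcontact} and Lemma~\ref{cTJc} are in hand; the only step calling for genuine care is the last one — confirming that the boundary limits of $a_{\lambda}$ are finite and strictly positive — and this is precisely where the standing assumption $\lim_{t\to 0^{+}}q(t)/t\in{\mathbb R}^{+}$ enters.
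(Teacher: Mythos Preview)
Your proof is correct and follows exactly the route the paper intends: the paper simply states that the corollary is obtained by applying Theorem~\ref{tcontact} to the metric ${\bf g}^{q}$ of (\ref{gq}), and you have carried out that verification in full, identifying $\frac{1}{4r^{2}}{\bf g}^{q}$ with ${\bf g}^{q,1/(2r),\,\lambda_{\mathbb R}/(4r^{2}q_{\lambda})}$ and checking conditions (i), (ii) and the extension clause. The only remark is that your explicit limit computation for $a_{\lambda}$ at boundary points, while correct, is not strictly needed once you invoke the final sentence of Theorem~\ref{tcontact}, since that sentence already packages the extension criterion for structures satisfying (i) and (ii) into the single hypothesis $\lim_{t\to 0^{+}}q(t)/t\in{\mathbb R}^{+}$.
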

Another consequence of Theorem \ref{tcontact} is the following version of Tashiro's Theorem \cite{Tash} for tangent sphere bundles of {\em any} radius of symmetric spaces of compact type.

\begin{corollary} The standard almost contact metric structure on $T_{r}(G/K)$ is contact metric if and only if $r = \frac{1}{2}.$ Moreover, for all $r>0,$ the rectified almost contact metric structure $(\varphi = \varphi^{S},\xi = 2r\xi^{S},\eta = \frac{1}{2r}\eta^{S},\tilde{g} = \frac{1}{4r^{2}}\tilde{g}^{S})$ is contact metric and it is $K$-contact if and only if $r = 1$ and $G/K = {\mathbb S}^{n}$ or ${\mathbb R}{\bf P}^{n};$ in that case $(\varphi,\xi,\eta,\tilde{g})$ is Sasakian.
\end{corollary}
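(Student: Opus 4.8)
The plan is to deduce both assertions of the corollary from Theorem~\ref{tcontact} (together with its special case Corollary~\ref{cmain}), after observing that both the standard and the rectified almost contact metric structures on $T_{r}(G/K)$ are the structures induced from members of the family $(J^{q},{\bf g}^{q,a_{0},a_{\lambda}})$ with $q={\rm Id}_{\mathbb R}$.

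First I would identify the standard almost contact metric structure $(\varphi^{S},\xi^{S},\eta^{S},\tilde{g}^{S})$ on $T_{r}(G/K)$ as the one induced from $(J^{S},g^{S})=(J^{{\rm Id}},{\bf g}^{{\rm Id},1,1})$ (see Proposition~\ref{estandard} and Remark~\ref{ralmostkahler}); here $a_{0}=a_{\lambda}=1$ and $q={\rm Id}_{\mathbb R}$, so $q_{\lambda}(w)=\lambda_{\mathbb R}(w)$, $\xi=\frac{1}{a_{0}}\xi^{S}=\xi^{S}$, $\eta=a_{0}\eta^{S}=\eta^{S}$ and $\varphi^{{\rm Id}}=\varphi^{S}$. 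Then I would simply read off Theorem~\ref{tcontact}: condition (ii) there reads $1=\frac{\lambda_{\mathbb R}(w)}{2r\,\lambda_{\mathbb R}(w)}=\frac{1}{2r}$, i.e.\ $r=\frac12$, while for ${\rm rank}\,G/K\ge 2$ condition (i) gives $1=a_{0}=\frac{1}{2r}$, the same conclusion. Hence the standard structure is contact metric if and only if $r=\frac12$.

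Next I would treat the rectified structure. By Remark~\ref{ralmostkahler}, ${\bf g}^{{\rm Id}}=g^{S}$, and $(J^{{\rm Id}},\frac{1}{4r^{2}}g^{S})$ is precisely the almost K\"ahler structure of Theorem~\ref{tKahler} corresponding to the constant $a_{0}=\frac{1}{2r}$ and $a_{\lambda}(w)=\frac{a_{0}^{2}\lambda_{\mathbb R}(w)}{q_{\lambda}(w)}=a_{0}^{2}=\frac{1}{4r^{2}}$; equivalently $\frac{1}{4r^{2}}g^{S}={\bf g}^{{\rm Id},\frac{1}{2r},\frac{1}{4r^{2}}}$. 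From the formulas displayed just before Remark~\ref{rrank1} for the induced almost contact metric structure one obtains $\xi=\frac{1}{a_{0}}\xi^{S}=2r\,\xi^{S}$, $\eta=a_{0}\eta^{S}=\frac{1}{2r}\eta^{S}$, $\tilde{\bf g}=\frac{1}{4r^{2}}\tilde{g}^{S}$ and $\varphi^{{\rm Id}}=\varphi^{S}$, which is exactly the rectified structure of the statement. Since $\lim_{t\to 0^{+}}\frac{t}{t}=1\in{\mathbb R}^{+}$, Corollary~\ref{cmain} applies (equivalently, conditions (i)--(ii) of Theorem~\ref{tcontact} hold identically for $q={\rm Id}_{\mathbb R}$, $a_{0}\equiv\frac{1}{2r}$, $a_{\lambda}=\frac{1}{4r^{2}}=\frac{a_{0}\lambda_{\mathbb R}(w)}{2r\,\lambda_{\mathbb R}(w)}$), so the rectified structure is contact metric for every $r>0$ and every symmetric space $G/K$ of compact type.

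Finally, for the $K$-contact and Sasakian statements I would invoke the last clause of Theorem~\ref{tcontact}: the rectified contact metric structure is $K$-contact, indeed Sasakian, if and only if ${\rm rank}\,G/K=1$ and $q_{\lambda}(r)=1$ for all $\lambda\in\Sigma^{+}$, with $q={\rm Id}_{\mathbb R}$. The point requiring care is to evaluate $q_{\lambda}$ at the actual point of ${\mathcal S}_{W}(r)$, which in the rank-one normalization $\varepsilon_{\mathbb R}(X)=1$, $\langle X,X\rangle=1$ is $rX$: this yields $q_{\varepsilon}(r)=\varepsilon_{\mathbb R}(rX)=r$ and, when $\frac12\varepsilon\in\Sigma$, $q_{\varepsilon/2}(r)=(\frac12\varepsilon)_{\mathbb R}(rX)=\frac{r}{2}$. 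For $\Sigma=\{\pm\varepsilon\}$, i.e.\ $G/K={\mathbb S}^{n}$ or ${\mathbb R}{\mathbf P}^{n}$ (Table I), the condition collapses to $r=1$; for $\Sigma=\{\pm\varepsilon,\pm\frac12\varepsilon\}$, i.e.\ ${\mathbb C}{\mathbf P}^{n}$, ${\mathbb H}{\mathbf P}^{n}$ or ${\mathbb C}a{\mathbf P}^{2}$, the incompatible requirements $r=1$ and $\frac{r}{2}=1$ show that the structure is $K$-contact for no radius. Hence the rectified structure is $K$-contact exactly when $r=1$ and $G/K={\mathbb S}^{n}$ or ${\mathbb R}{\mathbf P}^{n}$, and in that case it is Sasakian, again by Theorem~\ref{tcontact} (or by \cite[Theorem~1.1]{JC} via Theorem~\ref{mainrank1}). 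I expect this last step---evaluating $q_{\lambda}$ at $rX$ rather than at $X$, which is exactly what makes the two-restricted-root rank-one spaces fail the $K$-contact condition for all $r$---to be the only genuinely delicate point; the rest reduces to the dictionary provided by Theorems~\ref{tcontact} and~\ref{mainrank1} and to the rank one/rank $\ge 2$ dichotomy in the former.
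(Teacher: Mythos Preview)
Your proposal is correct and follows exactly the route the paper intends: the corollary is stated there without proof, as an immediate consequence of Theorem~\ref{tcontact} (and Corollary~\ref{cmain}), and you have simply written out the verification that the standard and rectified structures are the induced structures for $q={\rm Id}_{\mathbb R}$ with $(a_{0},a_{\lambda})=(1,1)$ and $(a_{0},a_{\lambda})=(\tfrac{1}{2r},\tfrac{1}{4r^{2}})$, respectively. Your handling of the $K$-contact clause via $q_{\lambda}(rX)=\lambda_{\mathbb R}(rX)$ is the right reading of the paper's notation; note that the same conclusion can also be obtained in one line from Corollary~\ref{lrank1}, since $\xi=2r\,\xi^{S}$ is Killing for $\tfrac{1}{4r^{2}}\tilde g^{S}$ if and only if $\xi^{S}$ is Killing for $\tilde g^{S}$.
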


Finally, we analyze the condition for $\xi$ to be Killing with respect to $\tilde{\bf g}^{q,a_{0},a_{\lambda}}$ and the normality of the almost contact structure $(\varphi^{q},\xi,\eta)$ on $G/H\times {\mathcal S}_{W}(r).$

\begin{theorem} On the almost contact metric manifold $(G/H\times {\mathcal S}_{W}(r),\varphi^{q},\xi,\eta,\tilde{\bf g}^{q,a_{0},a_{\lambda}}),$ for each $r>0,$ the following statements are equivalent:
\begin{enumerate}
\item[{\rm (i)}] the vector field $\xi$ is Killing;
\item[{\rm (ii)}] the almost contact structure $(\varphi^{q},\xi,\eta)$ is normal;
\item[{\rm (iii)}] ${\rm rank}\;G/K = 1$ and $q_{\lambda}(r) = 1,$ for each $\lambda\in \Sigma^{+}.$
\end{enumerate}
\end{theorem}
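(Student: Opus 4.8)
The plan is to establish the equivalences (i)$\Leftrightarrow$(iii) and (ii)$\Leftrightarrow$(iii) separately, in each case distinguishing ${\rm rank}\,G/K=1$ from ${\rm rank}\,G/K\geq 2$. For (i)$\Leftrightarrow$(iii) I would mimic the proof of Theorem~\ref{tmain}. If ${\rm rank}\,G/K={\bf r}\geq 2$, fix $w\in\mathcal U^{\alpha}_{j_0}\subset\mathcal S_W(r)$ (see (\ref{Ualpha})) with $w_{j_0}\neq 0$ and some $j\neq j_0$ (available since $\dim{\mathfrak a}\geq 2$). As $a_0$ and the field $(Y_j,0)$ are constant in the $G/H$-directions, Lemma~\ref{brackets} gives $[\xi,(Y_j,0)]_{(o_H,w)}=0$ and $[\xi,(0,P_j)]_{(o_H,w)}=-\tfrac{1}{r\,a_0(w)}\,(Y_j(w),0)+c\,(w,0)$ for a scalar $c$; since $\tilde{\bf g}^{q,a_0,a_\lambda}$ has no mixed $G/H$--$\mathcal S_W(r)$ components and $\langle Y_j(w),w\rangle=0$, the computation that produced (\ref{L2}) now yields
\[
(\mathcal L_{\xi}\tilde{\bf g}^{q,a_0,a_\lambda})_{(o_H,w)}\big((Y_j,0),(0,P_j)\big)=\tfrac{1}{r\,a_0(w)}\,\tilde{\bf g}^{q,a_0,a_\lambda}_{(o_H,w)}\big((Y_j,0),(Y_j,0)\big)=\tfrac{a_0(w)}{r}\big(w_j^2+w_{j_0}^2\big)>0,
\]
so $\xi$ is not Killing. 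If ${\rm rank}\,G/K=1$, then $G/H\times\mathcal S_W(r)=G/H$ and $a_0$ is constant, hence $\xi=\tfrac{1}{a_0}\xi^S$ is Killing iff $\xi^S$ is; moreover $\tilde{\bf g}^{q,a_0,a_\lambda}_{o_H}$ is a metric of the form (\ref{Bb}) with $b_\lambda=q_\lambda(r)^2 a_\lambda$, so by Theorem~\ref{tmain} this happens iff $a_\lambda=b_\lambda$, that is, iff $q_\lambda(r)=1$ for every $\lambda\in\Sigma^{+}$. This proves (i)$\Leftrightarrow$(iii), and in particular (iii)$\Rightarrow$(i).

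For (iii)$\Rightarrow$(ii) I would avoid a direct Nijenhuis computation. The normality condition --- the vanishing of $[\varphi,\varphi](X,Y)+2d\eta(X,Y)\xi$ --- depends only on the almost contact structure $(\varphi,\xi,\eta)$, and is invariant under the rescaling $(\varphi,\xi,\eta)\mapsto(\varphi,c\,\xi,\tfrac1c\,\eta)$ with $c>0$. Now if ${\rm rank}\,G/K=1$ and $q_\lambda(r)=1$ for all $\lambda$, then the formulas (\ref{varphivarphi}) for $\varphi^{q}$, together with $\xi=\tfrac1{a_0}\xi^S$ and $\eta=a_0\langle\,\cdot\,,X\rangle$, show that $(\varphi^{q},\xi,\eta)$ coincides, up to such a rescaling by the constant $a_0$, with the almost contact structure underlying the Riemannian structure (\ref{eqsasakian}) of Theorem~\ref{mainrank1}; that structure is Sasakian, hence normal, and therefore $(\varphi^{q},\xi,\eta)$ is normal.

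Finally, for (ii)$\Rightarrow$(iii) I would evaluate the tensor $N(X,Y):=[\varphi^{q},\varphi^{q}](X,Y)+2d\eta(X,Y)\xi$ on suitable pairs. If ${\rm rank}\,G/K={\bf r}\geq 2$, take $w\in\mathcal U^{\alpha}_{j_0}$ and $j\neq j_0$; using $\varphi^{q}\xi=0$, the action of $\varphi^{q}$ on ${\mathfrak a}\times T_w\mathcal S_W(r)$ (so $\varphi^{q}(Y_j(w),0)=(0,P_j(w))$, $\varphi^{q}(0,P_j(w))=-(Y_j(w),0)$ and $\varphi^{q}(w,0)=0$), Lemma~\ref{brackets} and (\ref{q1}), one obtains
\[
N_{(o_H,w)}\big(\xi,(0,P_j)\big)=\tfrac{1}{r\,a_0(w)}\,(Y_j(w),0)+c'\,(w,0)
\]
for a scalar $c'$, which is nonzero because $Y_j(w)\perp w$, $Y_j(w)\neq 0$ and $\dim{\mathfrak a}\geq 2$; hence the structure is not normal, so (ii) forces ${\rm rank}\,G/K=1$. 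In that case $a_0$ and $q_\lambda(r)$ are constants and, using (\ref{eq.ms4.3}), $\varphi^{q}X=0$, $(\varphi^{q})^2=-{\rm I}+\eta\otimes\xi$ and $\eta(\zeta^{s}_{\lambda})=0$, a short computation at $o_H$ gives
\[
N_{o_H}\big(\xi,\xi^{s}_{\lambda}\big)=\tfrac{\lambda_{\mathbb R}(X)}{a_0}\Big(1-\tfrac{1}{q_\lambda(r)^{2}}\Big)\,\zeta^{s}_{\lambda},
\]
so normality forces $q_\lambda(r)=1$ for each $\lambda\in\Sigma^{+}$; this gives (ii)$\Rightarrow$(iii) and completes the proof. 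The main technical obstacle is the pair of rank-$\geq 2$ computations: in both cases one must isolate the component along $(Y_j(w),0)$, which survives independently of the (generally non-constant) function $a_0$ and of the derivatives of $a_0$ entering $d\eta$, and this relies on combining Lemma~\ref{brackets} with the explicit form of $\varphi^{q}$ on the block ${\mathfrak a}\times T_w\mathcal S_W(r)$; along (iii)$\Rightarrow$(ii) a fully self-contained Nijenhuis check, especially when $\Sigma^{+}=\{\varepsilon,\varepsilon/2\}$, would be lengthy, whence the shortcut through Theorem~\ref{mainrank1}.
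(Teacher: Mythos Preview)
Your proposal is correct and follows essentially the same route as the paper: the equivalence (i)$\Leftrightarrow$(iii) is obtained exactly as in Theorem~\ref{tmain} (via the Lie derivative evaluated on $((Y_j,0),(0,P_j))$ in rank $\geq 2$ and via the constants $a_\lambda,b_\lambda$ in rank one), and (iii)$\Rightarrow$(ii) is reduced, as in the paper, to the Sasakian case of Theorem~\ref{mainrank1} (equivalently, to \cite{JC}). The only cosmetic difference is in the rank~$\geq 2$ obstruction to normality: the paper evaluates $N(\xi,(Y_j,0))$ and finds the nonzero component $\tfrac{1}{r\,a_0(w)}(0,P_j(w))$, whereas you evaluate $N(\xi,(0,P_j))$ and find $\tfrac{1}{r\,a_0(w)}(Y_j(w),0)$ modulo a multiple of $(w,0)$; both choices work and yield the same conclusion.
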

\begin{proof} ${\rm (i)}\Rightarrow {\rm (iii)}.$ Suppose that ${\rm rank}\;G/K = {\bf r} \geq 2$ and let $w\in {\mathcal U}^{\alpha}_{j_{0}}\subset {\mathcal S}_{W}(r),$ for some $\alpha\in \{0,1\}$ and $j_{0}\in \{1,\dots, {\bf r}\},$ where $w_{j_{0}} = \langle w,X_{j_{0}}\rangle \neq 0.$ Then, using (\ref{L2}) and (\ref{gY}),
$$
\begin{array}{lcl}
({\mathcal L}_{\xi}\tilde{\bf g})_{(o_{H},w)}((Y_{j}(w),0),(0,P_{k}(w))) & = & \frac{1}{a_{0}(w)}({\mathcal L}_{\xi^{S}}\tilde{\bf g})_{(o_{H},w)}((Y_{j}(w),0),(0,P_{k}(w))\\[0.4pc]
& =  & \frac{1}{ra_{0}(w)}\tilde{\bf g}_{(o_{H},w)}((Y_{j},0),(Y_{k},0)),
\end{array}
$$
for all $j,k\in \{1,\dots,{\bf r}\},$ $j,k\neq j_{0}.$ Hence, $\xi$ cannot be Killing. Therefore, if $\xi$ Killing then ${\rm rank}\;G/K = 1,$ $\xi = \frac{1}{a_{0}(r)}\xi^{S}$ on $G/H\times \{r\}$ and so the standard vector field $\xi^{S}$ is also Killing. Then, using Theorem \ref{tmain}, we have that $q_{\lambda}(r) = 1,$ for each $\lambda\in \Sigma^{+}.$

${\rm (iii)}\Rightarrow {\rm (i)}.$ We have that $a_{\lambda}(r) = b_{\lambda}(r)$ in (\ref{Bb}) and from Theorem \ref{tmain}, $\xi^{S}$ and $\xi$ are Killing vector fields.

${\rm (ii)}\Rightarrow {\rm (iii)}.$ If ${\rm rank}\;G/K = {\bf r}\geq 2,$ it follows, using Lemma \ref{brackets} and (\ref{qY}) that, 
\[
[\varphi^{q},\varphi^{q}]_{(o_{H},w)}(\xi,(Y_{j},0)) + 2d\eta_{(o_{H},w)}(\xi,(Y_{},0)) = \frac{1}{ra_{0}(w)}(0,P_{j}(w)),\quad j = 1,\dots ,{\bf r},\;j \neq j_{0},
\]
for all $w\in {\mathcal S}_{W^{+}}(r)$ such that $x_{j_{0}}(w) \neq 0,$ for some $j_{0}\in \{1,\dots ,{\bf r}\}.$ Hence, $(\varphi^{q},\xi,\eta)$ cannot be normal. If ${\rm rank}\;G/K = 1,$ applying (\ref{eq.ms4.3}), we obtain
 \[
 [\varphi^{q},\varphi^{q}]_{(o_{H} }(\xi,\xi^{s}_{\lambda}) + 2d\eta_{o_{H}}(\xi,\xi^{s}_{\lambda})\xi = \frac{\lambda_{\mathbb R}(X)}{a_{0}q_{\lambda}^{2}(r)}(q_{\lambda}^{2}(r) -1)\zeta^{s}_{\lambda}, \quad s = 1,\dots, m_{\lambda},\quad \lambda\in \Sigma^{+}.
 \]
 This means that the normality of $(\varphi^{q},\xi,\eta)$ implies that $q_{\lambda},$ where $q_{\lambda} = q(\lambda_{\mathbb R}(r)),$ must be equal to $1$ for each $\lambda\in \Sigma^{+}.$ For the converse, ${\rm (iii)}\Rightarrow {\rm (ii)},$ we use the fact that $(\varphi^{1},\xi,\eta)$ is normal as the author proved in \cite{JC}. 
  \end{proof}

\vspace{0.1cm}

\end{document}